%% Do not edit unless you really know what you are doing.
\documentclass[reqno,11pt]{amsart}
\usepackage{setspace}

\usepackage[T1]{fontenc}
\usepackage{lmodern}

\makeatletter
\usepackage{fullpage}
\usepackage{textcmds}  % amsrefs needs this, but it has to be loaded early to avoid re-defs.
\usepackage{amsmath, amssymb, amsfonts, amstext, verbatim, amsthm, mathrsfs, stmaryrd}
\usepackage[all,cmtip,2cell]{xy}
%\UseTwocells
\usepackage{pgf,tikz,pgfplots}
\pgfplotsset{compat=1.15}
\usetikzlibrary{arrows}
\usetikzlibrary{positioning}
\usetikzlibrary{quotes}
\usepackage{tikz-cd}

\usepackage{enumerate}
\usepackage{enumitem}
\usepackage[colorlinks=true,linkcolor=blue,citecolor=blue,urlcolor=blue,citebordercolor={0 0 1},urlbordercolor={0 0 1},linkbordercolor={0 0 1}]{hyperref} %needs to be loaded after most things
\usepackage[shortalphabetic]{amsrefs} %needs to be loaded after hyperref
\usepackage[nameinlink]{cleveref}

\usepackage{enotez}
\setenotez{backref=true}

%%defines mathcal, mathbb, and mathfrak capital letters
\def\makeCal#1{%
\expandafter\newcommand\csname c#1\endcsname{\mathcal{#1}}}
\def\makeBB#1{%
\expandafter\newcommand\csname b#1\endcsname{\mathbb{#1}}}
\def\makeFrak#1{%
\expandafter\newcommand\csname f#1\endcsname{\mathfrak{#1}}}

\count@=0
\loop
\advance\count@ 1
\edef\y{\@Alph\count@}%
\expandafter\makeCal\y
\expandafter\makeBB\y
\expandafter\makeFrak\y
\ifnum\count@<26
\repeat
%%%%

%-theorem types-
%---------------------------------------------------------
\theoremstyle{plain}
\newtheorem{thm}{Theorem}[section]
\newtheorem{cor}[thm]{Corollary}
\newtheorem{lem}[thm]{Lemma}

\newtheorem{prop}[thm]{Proposition}

\theoremstyle{definition}
\newtheorem{rem}[thm]{Remark}
\newtheorem{defn}[thm]{Definition}

\newtheorem{setup}[thm]{Setup}
\newtheorem{notn}[thm]{Notation}

\newtheorem{ex}[thm]{Example}

\newtheorem*{thm*}{Theorem}
\newtheorem*{prop*}{Proposition}

\newtheorem{introthm}{Theorem}

\def\rm{\mathrm}

\DeclareMathOperator{\rk}{rank}

\DeclareMathOperator{\Aut}{Aut}

\DeclareMathOperator{\ch}{ch}
\DeclareMathOperator{\DCoh}{D^b_{coh}}

\DeclareMathOperator{\Coh}{Coh}

\DeclareMathOperator{\Cone}{Cone}

\DeclareMathOperator{\Fun}{Fun}
\DeclareMathOperator{\GL}{GL}

\DeclareMathOperator{\Hom}{Hom}
\newcommand{\id}{\mathrm{id}}

\DeclareMathOperator{\Mod}{-Mod}

\newcommand{\pt}{\mathrm{pt}}

\DeclareMathOperator{\rank}{rank}

\DeclareMathOperator{\RHom}{RHom}

\DeclareMathOperator{\Stab}{Stab}

\DeclareMathOperator{\Pic}{Pic}

\DeclareMathOperator{\Ob}{Ob}
\newcommand{\heart}{\heartsuit}

\DeclareMathOperator{\im}{im}

\DeclareMathOperator{\gl}{gl}

\DeclareMathOperator{\logZ}{logZ}
\DeclareMathOperator{\gr}{gr}

\newcommand{\iprec}{\mathop{\prec}^{\mathrm{i}}}
\newcommand{\ipreceq}{\mathop{\preceq}^{\mathrm{i}}}
\newcommand{\isucc}{\mathop{\succ}^{\mathrm{i}}}

\newcommand{\isim}{\mathop{\sim}^{\mathrm{i}}}

\def\bf{\mathbf}

\def\bb{\mathbb}

%\hyphenation{sub-category}
%\hyphenation{sub-categories}
%%%--------------------------------------

\usepackage{pbox}
\usepackage[normalem]{ulem}

\newcommand{\lss}{\mathcal{P}}

\makeatletter

\usepackage{babel}
\begin{document}

\title[Quasi-convergence of stability conditions]{Quasi-convergence of stability conditions}

\author[D. Halpern-Leistner]{Daniel Halpern-Leistner}
\address{Department of Mathematics, Cornell University, Ithaca, NY}
\email{daniel.hl@cornell.edu}

\author[J. Jiang]{Jeffrey Jiang}
\email{jj685@cornell.edu}

\author[A. Robotis]{Antonios-Alexandros Robotis}
\address{Department of Mathematics, Columbia University, New York, NY}
\email{a.robotis@columbia.edu}

\begin{abstract}
    We develop a framework relating semiorthogonal de-compositions of a triangulated category $\cC$ to paths in its space of stability conditions. We prove that when $\cC$ is the homotopy category of a smooth and proper idempotent complete pre-triang\-ulated dg-category, every semi\-orthogonal decomposition whose fac\-tors admit a Bridgeland stability condition can be obtained from our framework.
\end{abstract}

\maketitle

\tableofcontents

\linespread{1.15}\selectfont

\section{Introduction}

Derived categories were originally developed by Grothendieck and Verdier as a technical tool to streamline proofs and calculations in homological algebra \cite{Verdierquotient}. Some years later, it was realized that the bounded derived category of coherent sheaves on a variety $X$, written $\DCoh(X)$, is an interesting and subtle invariant of $X$. Bondal and Orlov's seminal work \cite{BondalOrlovsod}, along with many following developments, have suggested that the birational geometry of $X$ should manifest itself through decompositions of $\DCoh(X)$ into simpler pieces.

A \emph{semiorthogonal decomposition} of a triangulated category $\cC$, written $\cC = \langle \cC_1,\ldots,\cC_n \rangle$, is a totally ordered collection of full trian\-gulated subcategories $\cC_i \subset \cC$ that collectively generate $\cC$, and such that one has $\Hom(E,F)=0$ for $E \in \cC_i$ and $F \in \cC_j$ with $i>j$. This implies the existence of unique and functorial filtrations of any object $F \in \cC$ with associated graded pieces $\gr_i(F) \in \cC_i$ for each $i$ \cite{B-KSerre}. This can be a powerful tool for understanding the category $\cC$. For example, all additive invariants of $\cC$ split; e.g., $K_0(\cC) \cong \bigoplus_{i} K_0(\cC_i)$.  

The archetypal example of a semiorthogonal decomposition comes from \cite{Beilinson1978}, which shows that $\DCoh(\bP^n)$ can be semi\-orthogonally decomposed into the categories generated by $\cO(k)$ for $k=0,\ldots,n$, each of which is equivalent to $\DCoh(\pt)$.

In recent years it has become clear that the structure of semiorthogonal decompositions is more intricate than first expected. The Jordan--H\"{o}lder Property for semiorthogonal decompositions (as posed in, e.g., \cite{Kawamata2008derived}) fails even for fairly tame varieties \cite{BGvBSJH}. Furthermore, contrary to initial expectations, there are now numerous examples where $\DCoh(X)$ contains a \emph{phantom subcategory}; i.e., a component of a semiorthogonal decomposition $\cA \subset \DCoh(X)$ with $K_0(\cA) = 0$ \cites{BGvBKSPhantom,GorchOrlov,KrahPhantom}. In \cite{NMMP}, it is proposed that a potential way to rule out these phenomena is to instead consider \emph{polarizable} semiorthogonal decompositions, i.e., decompositions $\DCoh(X) = \langle \cA_1,\ldots, \cA_n\rangle$ such that each $\cA_i$ admits a Bridgeland stability condition \cites{Br07, Bayer_short}.

The objective of this work is to provide a general mechanism for identifying polarizable semi\-orthogonal decompositions of $\cC$ using the manifold of Bridgeland stability conditions, $\Stab_\Lambda(\cC)$. We introduce \emph{quasi-convergent paths} in $\Stab_\Lambda(\cC)$ (\Cref{D:exit_sequence}) which are paths $\sigma_\bullet:[0,\infty) \to \Stab_\Lambda(\cC)$ satisfying two conditions:
\begin{enumerate}
    \item All nonzero objects of $\cC$ have \emph{limit} Harder--Narasimhan (HN) filtrations with sub\-quotient objects in a class of \emph{limit semistable} objects, $\lss_{\sigma_\bullet} \subset \cC$; and  \vspace{2mm}
    
    \item For any pair of limit semistable objects $E,F$, the difference in the log of their central charges $\logZ_t(F) - \logZ_t(E)$ either converges as $t\to\infty$ or diverges along a well-defined ray $\bR_{>0}\cdot e^{i\theta}\subset \bC$.
\end{enumerate}

\begin{rem}
Throughout, our stability conditions are required to satisfy the support property with respect to a fixed homomorphism $v:K_0(\cC)\twoheadrightarrow \Lambda$ to a free Abelian group of finite rank, $\Lambda$. See \Cref{S:background} for background on (pre)stability conditions.
\end{rem}

Condition (2) allows us to partition the collection of limit semistable objects by saying $E \sim F$ if $\logZ_t(E)-\logZ_t(F)$ converges. We can then define subcategories of $\cC$ generated by the limit semistable objects in a given equivalence class, and in good cases these subcategories are pieces of a semiorthogonal decomposition of $\cC$.

More precisely, we use the asymptotics of $\logZ_t(E)-\logZ_t(F)$ to introduce a total preorder $\preceq$ on $\lss = \lss_{\sigma_\bullet}$ whose associated equivalence relation is $\sim$ (\Cref{D:preordering}). For any $E \in \lss$, we let $\cC_{\preceq E}$ be the full subcategory of $\cC$ consisting of objects with limit HN factors that are $\preceq E$, and likewise for $\cC_{\prec E}$. Our first main result is the following.

\begin{introthm}[= \Cref{P:exit_sequence_filtration} +  \Cref{T:prestabilityonquotient}]
\label{T:firsttheorem}
For a quasi-convergent path $\sigma_\bullet$, the $\cC_{\preceq E}$ are thick triangulated subcategories of $\cC$, and:
\begin{enumerate}
    \item Each category $\cC_{\preceq E}/\cC_{\prec E}$ admits a pre-stability condition $\sigma_E$ such that the semistable objects are precisely the images of those $F \in \lss$ with $F \sim E$, and whose central charge is \[Z_E(F) = \lim_{t\to \infty} Z_t(F)/Z_t(E).\]

    \item $\lss$ can be partitioned by a coarser equivalence relation $\isim$ (see \Cref{D:preordering}) such that the categories $\cC^E$ consisting of objects with limit HN factors $\isim E$ are the factors of a semiorthogonal decomposition $\cC = \langle \cC^E \colon E \in \lss /{\sim^i} \rangle$.
\end{enumerate}
\end{introthm}

For any $E,E'\in \lss$ with $E \sim E'$, one has $\cC_{\preceq E} = \cC_{\preceq E'}$ and $\cC_{\prec E} = \cC_{\prec E'}$ so that $\cC_{\preceq E}/\cC_{\prec E} = \cC_{\preceq E'}/\cC_{\prec E'}$. Consequently, one obtains a collection of subcategories $\{\cC_{\preceq E}\}$ naturally indexed by $\lss/{\sim}$. By a slight abuse of notation, we write $E\in \lss/{\sim}$ for the class of $E$ in $\lss/{\sim}$. It is also useful to note that the filtration $\{\cC_{\preceq E}\}_{E\in \lss/{\sim}}$ can be obtained by first semiorthogonally decomposing $\cC = \langle \cC^E:E\in \lss/{\isim}\rangle$ and then filtering the subcategories $\cC^E$ by the thick triangulated subcategories $\cC^E_{\preceq F} := \cC^E\cap \cC_{\preceq F}$, where $E,F\in \lss$ and $E\isim F$. See \Cref{F:filtrationpicture}.

By contrast, the pre-stability condition $\sigma_E$ on $\cC_{\preceq E}/\cC_{\prec E}$ depends on the choice of $E\in \lss$, rather than its class in $\lss/{\sim}$. However, for $E\sim E'$ as above, there is a unique $\alpha \in \bC$ such that $\alpha \cdot \sigma_{E'} = \sigma_E$. (See \Cref{S:background} for the definition of the $\bC$-action on pre-stability conditions.) 

Note that the existence of $\sigma_E$ guarantees that $\rk K_0(\cC_{\preceq E}/\cC_{\prec E})>0$ so the associated graded subcategories of the filtration of \Cref{T:firsttheorem} are never phantoms.

\begin{figure}
    \centering
\begin{tikzpicture}
    \draw (0,0) [fill = orange] rectangle (1.5,1.5);
    \draw (1.5,0) [fill = orange] rectangle (3,1.5);
    \draw (3.0,0) [fill = orange] rectangle (4.5,1.5);
    \draw(4.5,0)  rectangle (6,1.5);
    \draw (6,0) rectangle (7.5,1.5);
    \draw(7.5,0)  rectangle (9,1.5);
    \draw(4.5,0) [fill = orange] rectangle (6,1);
    \draw(9,0) rectangle (10.5,1.5);

    \draw node at (.9,1.8) {$\cC^{E_1}$};
    \draw node at (2.4,1.8) {$\cdots$};
    \draw node at (3.75,1.8) {$\cC^{E_{j-1}}$};
    \draw node at (8.25,1.8) {$\cdots$};
    \draw node at (9.9,1.8) {$\cC^{E_n}$};
    \draw node at (5.4,1.8) {$\cC^{E_{j}}$};
    \draw node at (6.9, 1.8) {$\cC^{E_{j+1}}$};
    \draw node at (5.4,.5) {$\cC^{E_j}_{\preceq F}$};

\end{tikzpicture}

    \caption{To visualize \Cref{T:firsttheorem}, let $E_1,\ldots, E_n\in \lss$ be given such that $\{E_1,\ldots, E_n\}\to \lss/{\isim}$ is an ordered bijection. Then $\cC = \langle \cC^{E_1},\ldots, \cC^{E_n}\rangle$. Given $F\in \lss$ such that $F \sim^{\mathrm{i}} E_j$, one has $\mathcal{C}_{\preceq F} = \langle \mathcal{C}^{E_1},\ldots, \mathcal{C}^{E_{j-1}}, \mathcal{C}_{\preceq F}^{E_j}\rangle$ by \Cref{L:LePreciExtension}.}
    \label{F:filtrationpicture}
\end{figure}

The pre-stability conditions of \Cref{T:firsttheorem} do not necessarily satisfy the support property (\Cref{D:supportproperty}). To remedy this, we introduce the stronger notion of a \emph{numerical} quasi-convergent path (\Cref{D:numerical}) and a support property for such paths (\Cref{D:supportpropertypath}). We then have:

\begin{introthm}[= \Cref{T:stabilityonquotient}]
\label{T:secondtheorem}
If $\sigma_\bullet$ is a numerical quasi-convergent path in $\Stab_\Lambda(\cC)$, then
\begin{enumerate}
    \item each $Z_E$ of \Cref{T:firsttheorem} factors through the torsion free part of $v(\cC_{\preceq E})/v(\cC_{\prec E})$, denoted $\Lambda_E$; and \vspace{2mm}
    \item $\sigma_\bullet$ satisfies the support property for paths if and only if all $\sigma_E$ satisfy the support property with respect to $\Lambda_E$.
\end{enumerate}
\end{introthm}

In \Cref{S:numerical} we prove that for many categories $\cC$ considered in practice, every quasi-convergent path is numerical. For instance, this holds for stability conditions on $\DCoh(X)$ that are numerical in the usual sense (\Cref{E:numerical}).

We also show a partial converse to \Cref{T:firsttheorem}. Suppose $\cC$ is the homotopy category of a smooth and proper pre-triangulated dg-category, which is the case for many examples of interest (\Cref{Ex:examples}). Then any polarizable semiorthogonal decomposition, $\cC = \langle \cC_1,\ldots, \cC_n\rangle$, can be obtained from a quasi-convergent path. More precisely, given a homomorphism $v_i : K_0(\cC_i) \twoheadrightarrow \Lambda_i$ and a stability condition $\sigma_i \in \Stab_{\Lambda_i}(\cC_i)$ for all $i$, by identifying $K_0(\cC) \cong \bigoplus_i K_0(\cC_i)$ and defining $\Lambda = \bigoplus_i \Lambda_i$, we get a homomorphism $v := \bigoplus v_i : K_0(\cC) \to \Lambda$. We strengthen the gluing construction of \cite{CP10} to prove the following:

\begin{introthm}[= \Cref{T:recoveringsod}]
\label{T:thirdtheorem}
For $\cC = \langle \cC_1,\ldots, \cC_n\rangle$ and $(\sigma_i)_{i=1}^n \in \prod_{i=1}^n \Stab_{\Lambda_i}(\cC_i)$ as above, there exists a numerical quasi-convergent path $\sigma_\bullet$ in $\Stab_\Lambda(\cC)$ such that 
\begin{enumerate} 
    \item applying \Cref{T:firsttheorem} recovers $\cC = \langle \cC_1,\ldots, \cC_n\rangle$: for all $1\le i \le n$, there exists $E\in \lss/{\sim}$ such that $\cC^E = \cC_i$; and \vspace{2mm}
    \item for any $E \in \lss$ with $\cC^E=\cC_i$, $\sigma_E$ is equivalent to $\sigma_i$ with respect to the $\bC$-action on $\Stab_{\Lambda_i}(\cC_i)$.
\end{enumerate}
\end{introthm}

In \Cref{S:curves}, we consider examples of quasi-convergent paths in the case of $\cC = \DCoh(X)$ for $X$ a smooth projective curve.

\begin{figure}[hbt!]
\[
\begin{tikzcd}
    \left\{\parbox{3.5 cm}{\centering quasi-convergent $\sigma_\bullet$} \right\}\arrow[r,dashed,"(\ref{T:firsttheorem})"] & \left\{\parbox{5cm}{\centering filtrations $\{\cC_{\preceq E}\}_{E\in \lss/{\sim}}$ with pre-stability condition $\sigma_E$ on $\cC_{\preceq E}/\cC_{\prec E}$ up to $\bC$-action.}\right\} \\
    \left\{\parbox{4cm}{\centering numerical $\sigma_\bullet$ with the support property}\right\} \arrow[u,hook] \arrow[r,"(\ref{T:secondtheorem})",dashed] & \left\{\parbox{5cm}{ \centering filtrations $\{\cC_{\preceq E}\}_{E\in \lss/{\sim}}$ with $[\sigma_E]\in \Stab_{\Lambda_E}(\cC_{\preceq E}/\cC_{\prec E})/\bC$ for each $E\in \lss/{\sim}$}\right\} \arrow[u,hook]  \\
    \left\{\parbox{4cm}{\centering numerical $\sigma_\bullet$ with the support property such that $\sim\:=\isim$}\right\} \arrow[u,hook] \arrow[r,dashed, "(\ref{T:secondtheorem})",shift left] & \left\{\parbox{4.7 cm}{\centering $\cC = \langle \cC^E:E\in \lss/{\isim}\rangle$ with $[\sigma_E] \in \Stab_{\Lambda_E}(\cC^E)/\bC$ for each $E\in \lss/{\isim}$}\right\} \arrow[u,hook] \arrow[l,shift left, dashed,"(\ref{T:thirdtheorem})"]
\end{tikzcd}
\]
\caption{We schematize the three above theorems. The condition $\sim \:= \isim$ means that the relations are equivalent on $\lss$ so that the filtration $\{\cC_{\preceq E}\}_{E\in \lss/{\sim}}$ of \Cref{T:firsttheorem} is admissible with corresponding semiorthogonal decomposition as in the bottom right of the figure.}

\label{F:schematic}
\end{figure}

\subsection*{Related work and acknowledgements}

The inspiration for our construction came from the ``radar screens'' of \cites{symplectomorphism, mmp}. The idea there was, roughly, to study the Landau--Ginzburg models $(Y,W\colon Y \to \bC)$ that are mirror (in the sense of homological mirror symmetry) to certain toric varieties $X$, and to find semiorthogonal decompositions of $\DCoh(X)$ by studying the asymp\-totics of the critical points of $W$ as the Landau--Ginzburg model $(Y,W)$ degenenerates. The degeneration of the mirror $(Y, W)$, which is a variation of complex structure, was chosen to correspond to the toric minimal model program for $X$, which can be thought of as a variation of (complexified) K\"{a}hler structure on $X$.

Our results are intended to be a purely homological construction that captures the same structure without reference to the mirror of $X$. The variation of complexified K\"{a}hler structure on $X$ is replaced by a path in $\Stab_\Lambda(\DCoh(X))$, and instead of critical values of the Landau--Ginzburg mirror, we study the asymptotics of the central charges of semistable objects. The notion of quasi-convergent path that we introduce, and our main results, are used in \cite{NMMP} to propose a non-commutative version of the minimal model program that can be studied for any projective manifold, without reference to its mirror.

A natural question is whether numerical quasi-convergent paths in $\Stab_\Lambda(\cC)$ are actually con\-vergent in some larger space. The paper \cite{HL_robotis} constructs such a partial compactification of $\Stab_\Lambda(\cC)/\bC$, with boundary points corresponding to semiorthogonal decompositions and stability conditions on the factors, along with ad\-ditional data that remembers some information about the asymptotics of $\logZ_t(E)-\logZ_t(F)$ for pairs of limit semistable objects.

We would like to thank Arend Bayer, Eric Chen, Andres Fernandez Herrero, Kimoi Kemboi, Alex Perry, Alexander Polishchuk, Yukinobu Toda, and Xiaolei Zhao for many helpful conversations on the topics in this paper. We also thank the referee for useful comments and improvements. The third author would also like to thank Maria Teresa Mata Vivas for her love and support during the preparation of this paper. The authors were supported by NSF grants DMS-2052936 and DMS-1945478, and the first author was supported by a Sloan Fellowship, FG-2022-18834.

\subsection*{Notation and Conventions}

Throughout the paper $\cC$ denotes a pre-triangulated dg-category over a field $k$ unless otherwise specified. We write $\rm{Ho}(\cC)$ for the assoc\-iated triang\-ulated category. The results in \S\S 2.2-2.5 can be proven working only with triangulated categories; however, in \S 2.6 and \S3 we will need to work with smooth and proper pre-triangulated dg-categories.

Given a dg-category $\cC$ and a dg-subcategory $\cD$, $\cC/\cD$ denotes the quotient dg-category of \cite{Drinfelddg}. A strictly full subcategory $\cD$ is called \emph{thick} if for any $X,Y\in \cC$, $X\oplus Y \in \cD$ implies $X\in \cD$. Quotients of triangulated categories by thick subcategories were defined by Verdier \cite{Verdierquotient}. However, \cite{Drinfelddg}*{Thm. 1.6.2} gives that $\rm{Ho}(\cC/\cD)\simeq \rm{Ho}(\cC)/\rm{Ho}(\cD)$ as triangulated categories. Hence, the reader who prefers working with triangulated categories can do so without any serious loss of comprehension. 

For subcategories $\{\cD_\alpha\}_{\alpha \in I}$ of $\cC$, $[\cD_\alpha: \alpha \in I]$ denotes the smallest full subcategory containing all of the $\cD_\alpha$ that is closed under extensions. If all of the $\cD_\alpha$ are triangulated, then so is $[\cD_\alpha:\alpha \in I]$. If $I$ is a totally ordered set, then $\langle \cD_\alpha:\alpha \in I \rangle$ means that the categories $\cD_\alpha$ with the ordering from $I$ form a semiorthogonal decomposition of their triangulated closure and refers to that decomposition. We do not make any \emph{a priori} admissibility assumptions on the factors of semiorthogonal decomposition.

Let $\cA$ be an Abelian category. A nonempty full subcategory $\cB \subset \cA$ is called a \emph{Serre subcategory} if for any exact sequence $A\to B \to C$ in $\cA$, $A,C\in \cB$ implies $B\in \cB$ --- see \cite{stacks-project}*{\href{https://stacks.math.columbia.edu/tag/02MN}{Tag 02MN}}.

For $A$ a finitely generated Abelian group, we let $A_{\rm{tf}}$ denote its torsion free part. 

\section{Semiorthogonal decompositions from paths}

\subsection{Preliminaries on stability conditions} \label{S:background}

To fix notation and conventions, we recall the definition of Bridgeland stability conditions. We refer to the objects defined in \cite{Br07} as pre-stability conditions.

\begin{defn}
\cite{Br07} For a triangulated category $\cD$, a \emph{slicing} $\cP$ on $\cD$ is a collection of full additive subcategories $\{\cP(\phi):\phi \in \bR\}$ such that 
\begin{enumerate}
    \item $\cP(\phi)[1] = \cP(\phi+1)$ \vspace{2mm}
    \item for $\phi_1>\phi_2$ and $E_i \in \cP(\phi_i)$ for $i=1,2$, $\Hom_{\cD}(E_1,E_2) = 0$\vspace{2mm}
    \item for any $E\in \cD$, there are maps $0 = E_0 \to E_1\to\cdots \to E_n = E$ with $F_i = \Cone(E_{i-1}\to E_{i}) \in \cP(\phi_i)$ for $1\le i\le n$ and $\phi_1>\cdots>\phi_n$.
\end{enumerate}
The objects of $\cP(\phi)$ are called \emph{semistable} of phase $\phi$ and the collection of maps in (3) is called a \emph{Harder--Narasimhan (HN) filtration} of $E$. The $F_i$ are called the \emph{HN factors} of $E$.
\end{defn}

Given a slicing $\cP$, $X\in \cP$ means that $X$ is nonzero and semistable of some phase $\phi \in \bR$; i.e. $X\in \bigcup_{\phi \in \bR} \cP(\phi)\setminus \{0\}$.

\begin{defn}
    A \emph{pre-stability condition} on a triangulated category $\cD$ is a pair $(Z,\cP)$ where $\cP$ is a slicing and $Z:K_0(\cD) \to \bC$ is a group homomorphism such that for all $\phi \in \bR$ and $E\in \cP(\phi)$, $Z(E) = m(E)\cdot\exp(i\pi \phi)$ with $m(E) \in \bR_{>0}$. $m(E)$ is called the \emph{mass} of $E$.
\end{defn}

To finish the definition of a stability condition, we need to introduce the support property of \cite{KS08}.

\begin{defn}
\label{D:supportproperty}
    Let $\cD$ be a triangulated category and suppose given a surjective homo\-morphism $v:K_0(\cD) \twoheadrightarrow \Lambda$, with $\Lambda$ a free Abelian group of finite rank. A pre-stability condition $(Z,\cP)$ satisfies the \emph{support property} with respect to $v$ if there exists a $C \in \bR_{>0}$ such that 
    \[
    \inf_{E\in \cP(\phi),\phi \in \bR} \frac{\lvert Z(E)\rvert}{\lVert v(E)\rVert} \ge C
    \]
    for some (equivalently any) choice of norm on $\Lambda \otimes \bR$. A pre-stability condition $(Z,\cP)$ satisfying the support property with respect to $v:K_0(\cD) \twoheadrightarrow \Lambda$ is called a \emph{stability condition} and we denote the collection of all such stability conditions on $\cD$ by $\Stab_\Lambda(\cD)$.
\end{defn}

The remarkable fact about $\Stab_\Lambda(\cD)$ is that it has a natural structure of a complex manifold such that the projection map $\pi:\Stab_\Lambda(\cC)\to \Hom(\Lambda,\bC)$ is holomorphic. This property is sometimes called the deformation property and was originally proven by Bridge\-land \cite{Br07} for pre-stability conditions satisfying an additional technical condition. Stability conditions enjoy a stronger deformation property than the original one proven in \cite{Br07}. We refer to \cite{Bayer_short} for details.

We have written $\Stab_\Lambda(\cD)$ above for the space of stability conditions satisfying the support property with respect to a fixed $v:K_0(\cD)\twoheadrightarrow \Lambda$. However, whenever a stability condition is mentioned such a $v:K_0(\cD)\twoheadrightarrow\Lambda$ is implicit. Consequently, we may write $\Stab(\cD)$ instead.

The space of pre-stability conditions on $\cD$ carries a natural continuous action by the universal cover $\GL_2^+(\bR)^{\sim}$ of $\GL_2^+(\bR)$ \cite{Br07}. The action preserves the support property so there is an induced $\GL_2^+(\bR)^{\sim}$-action on $\Stab(\cD)$. The subgroup $\bC^\times\subset \GL_2^+(\bR)$ lifts to a subgroup $\bC\subset \GL_2^+(\bR)^\sim$. The action of $\bC$ is as follows: for any $z\in \bC$ and $(Z,\cP) \in \Stab(\cD)$, $z\cdot (Z,\cP) = (e^z\cdot Z, \cP^z)$, where $\cP^z(\phi) = \cP(\phi - \Im(z)/\pi)$. $\bC$ acts freely on $\Stab(\cD)$ and the quotient space $\Stab(\cD)/\bC$ admits a complex manifold structure such that $\Stab(\cD)\to \Stab(\cD)/\bC$ is a holomorphic principal $\bC$-bundle.

For a pre-stability condition $\sigma = (Z,\cP)$ and a nonzero object $E$, it is standard notation to let $\phi^+_\sigma(E)$ and $\phi^-_\sigma(E)$ denote the largest and smallest phase of an HN factor of $E$, respectively. Likewise, the \emph{mass} is defined as $m_\sigma(E) := \sum_i |Z_\sigma(F_i)|$, where $F_1,\ldots,F_n$ are the HN factors of $E$.

In addition to these standard functions, we introduce the following:

\begin{defn}
The \emph{average phase} of a nonzero object $E$ is 
\[
    \phi_\sigma(E) := \frac{1}{m_\sigma(E)}\sum_{i} \phi_\sigma(F_i) \cdot |Z_\sigma(F_i)|.
\]
where $F_1,\ldots,F_n$ are the HN factors of $E$. We also introduce the function
\[
\ell_\sigma(E) := \log m_\sigma(E) + i\pi \phi_\sigma(E),
\]
for any nonzero $E$, and we let $\ell_\sigma(E/F):=\ell_\sigma(E) - \ell_\sigma(F)$.
\end{defn}

When $E$ is semistable of phase $\phi$, $\phi_\sigma(E) = \phi$. The function $\ell_\sigma$ is meant to approximate the ``logarithm of the central charge'' of $E$. To make this precise we observe:

\begin{lem}
\label{L:logZcomp}
Let $\sigma \in \Stab_\Lambda(\cD)$ be given and $0\le \epsilon<1$. If $E \in \cD$ is nonzero and $\phi_\sigma^+(E) - \phi_\sigma^-(E) \le \epsilon$, then there is a unique complex number
\[
\logZ_\sigma(E) = \log \lvert Z_\sigma(E)\rvert + i\pi \theta
\]
such that $\theta \in [\phi_\sigma^-(E),\phi_\sigma^+(E)]$ and $e^{\logZ_\sigma(E)}=Z_\sigma(E)$. Furthermore,
\[
\lvert \Re(\ell_\sigma(E) - \logZ_\sigma(E)) \rvert \le \frac{(\pi\epsilon)^2}{8} + O(\epsilon^4)\:\:\text{and}\:\: \lvert \Im(\ell_\sigma(E) - \logZ_\sigma(E)) \rvert \le \pi \epsilon.
\]
\end{lem}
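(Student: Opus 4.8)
The plan is to compare the actual central charge $Z_\sigma(E)$ with the quantity $e^{\ell_\sigma(E)}$ by summing over the HN factors $F_1,\ldots,F_n$ of $E$. Let $\phi_i = \phi^+_\sigma(F_i)$ and $m_i = |Z_\sigma(F_i)| = m_\sigma(F_i)$, so that $Z_\sigma(F_i) = m_i e^{i\pi\phi_i}$ and $Z_\sigma(E) = \sum_i m_i e^{i\pi\phi_i}$, while $m_\sigma(E) = \sum_i m_i$ and $\phi_\sigma(E) = \frac{1}{\sum_i m_i}\sum_i \phi_i m_i$ is a weighted average of the $\phi_i$. By hypothesis all $\phi_i$ lie in an interval of length $\le \epsilon$, hence so does $\phi_\sigma(E)$; after acting by an element of $\bC$ (which scales $Z_\sigma$ by a nonzero complex number and translates all phases by a common constant, affecting neither side of the claimed inequalities) we may assume $\phi_\sigma(E) = 0$, so that $\phi_i \in [a,a+\epsilon]$ with $a \in [-\epsilon,0]$ and $\sum_i m_i\phi_i = 0$. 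Then $\ell_\sigma(E) = \log(\sum_i m_i)$ is real, and we must show $e^{\ell_\sigma(E)} = \sum_i m_i$ differs from $Z_\sigma(E) = \sum_i m_i e^{i\pi\phi_i}$ in the controlled way encoded by the two inequalities.

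First I would handle the imaginary part. We have $\arg Z_\sigma(E) = \pi\theta$ for the $\theta$ produced by the lemma, and since every summand $m_i e^{i\pi\phi_i}$ has argument $\pi\phi_i \in [\pi a, \pi(a+\epsilon)]$, the sum lies in the closed convex cone spanned by those rays, so $\theta \in [a, a+\epsilon] \subseteq [\phi^-_\sigma(E),\phi^+_\sigma(E)]$ — this also gives existence and uniqueness of $\logZ_\sigma(E)$ with the stated phase normalization, once one notes the cone has angular width $\pi\epsilon < \pi$ so the argument is unambiguous. Since $\phi_\sigma(E) = 0$ and $\theta \in [a,a+\epsilon] \subseteq [-\epsilon,\epsilon]$, we get $|\Im(\ell_\sigma(E) - \logZ_\sigma(E))| = \pi|\theta| \le \pi\epsilon$, which is the second inequality. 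For the real part, write $Z_\sigma(E) = \sum_i m_i\cos(\pi\phi_i) + i\sum_i m_i\sin(\pi\phi_i)$ and estimate $|Z_\sigma(E)|^2 = (\sum_i m_i)^2 - \big[(\sum_i m_i)^2 - |Z_\sigma(E)|^2\big]$. The bracketed term expands, using $\sum_{i,j} m_im_j(1 - \cos(\pi(\phi_i-\phi_j))) $, as a sum of nonnegative terms each bounded by $m_im_j \cdot \frac{(\pi|\phi_i-\phi_j|)^2}{2} \le m_im_j\frac{(\pi\epsilon)^2}{2}$; combining with the Taylor expansion of $\tfrac12\log(1-x)$ around $x=0$ gives $\Re(\ell_\sigma(E)) - \Re(\logZ_\sigma(E)) = \log(\sum_i m_i) - \log|Z_\sigma(E)| = -\tfrac12\log\big(|Z_\sigma(E)|^2/(\sum m_i)^2\big)$, which is $\le \frac{(\pi\epsilon)^2}{8} + O(\epsilon^4)$ after carrying the expansion one more order; one should double-check the sign (both sides are manifestly $\ge 0$ since averaging phases can only increase the modulus estimate, so the absolute value is automatic).

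The only genuinely delicate point is pinning down the constant $\tfrac18$ in the leading term of the real-part bound, rather than a cruder $\tfrac{(\pi\epsilon)^2}{2}$. The clean way is to reduce to the extremal configuration: for fixed total mass and fixed weighted-average phase $0$, the ratio $|Z_\sigma(E)|/\sum_i m_i$ is minimized when the mass is split into two atoms at the endpoints of an interval of length $\epsilon$ with the weighted mean at $0$ — but actually the worst case within $|\phi_i| \le \epsilon/2$ type bounds is two equal masses at $\pm\epsilon/2$, giving $|Z|/\sum m_i = \cos(\pi\epsilon/2) = 1 - \frac{(\pi\epsilon/2)^2}{2} + \cdots$, and $-\log\cos(\pi\epsilon/2) = \frac{(\pi\epsilon)^2}{8} + O(\epsilon^4)$, exactly matching. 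So I would prove a convexity/Jensen-type lemma showing this two-atom configuration is extremal among all distributions supported in an interval of length $\epsilon$ with mean $0$ (using that $t \mapsto \cos t$ is concave on $[-\pi/2,\pi/2]$, so $\sum m_i\cos(\pi\phi_i) \ge (\sum m_i)\cos(\pi \cdot \text{something})$ does \emph{not} directly work — instead one bounds $(\sum m_i)^2 - |\sum m_ie^{i\pi\phi_i}|^2 = \sum_{i<j} 2m_im_j(1-\cos\pi(\phi_i-\phi_j))$ and maximizes the pairwise spread, which is $\le \epsilon$, so this quantity is $\le \big(\sum_{i<j}2m_im_j\big)\big(1-\cos\pi\epsilon\big) \le \tfrac12(\sum m_i)^2(1-\cos\pi\epsilon)$, giving $|Z|^2/(\sum m_i)^2 \ge 1 - \tfrac12(1-\cos\pi\epsilon) = \tfrac12(1+\cos\pi\epsilon) = \cos^2(\pi\epsilon/2)$), and then $-\tfrac12\log\cos^2(\pi\epsilon/2) = -\log\cos(\pi\epsilon/2) = \frac{(\pi\epsilon)^2}{8}+O(\epsilon^4)$ closes it. Everything else is bookkeeping with the $\bC$-action and elementary Taylor estimates.
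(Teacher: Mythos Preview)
Your strategy and the paper's agree: both reduce the real-part estimate to the inequality $|Z_\sigma(E)| \ge m_\sigma(E)\cos(\pi\epsilon/2)$ and then Taylor-expand $-\log\cos(\pi\epsilon/2) = \tfrac{(\pi\epsilon)^2}{8}+O(\epsilon^4)$; the imaginary-part bound is identical in both, coming from the trivial observation that $\theta$ and $\phi_\sigma(E)$ both lie in $[\phi^-_\sigma(E),\phi^+_\sigma(E)]$.

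There is one genuine gap in your derivation of the key inequality. You write
\[
2\sum_{i<j}m_im_j \;\le\; \tfrac12\Bigl(\sum_i m_i\Bigr)^2,
\]
but this is false whenever $n\ge 3$: for three equal masses $m_1=m_2=m_3=1$ the left side is $6$ and the right side is $9/2$. More generally $2\sum_{i<j}m_im_j = (\sum m_i)^2 - \sum m_i^2$, and Cauchy--Schwarz only gives $\sum m_i^2 \ge \tfrac1n(\sum m_i)^2$, so your bound holds only for $n\le 2$. Consequently your chain of inequalities does not close to $|Z|^2/(\sum m_i)^2 \ge \cos^2(\pi\epsilon/2)$.

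The paper bypasses this entirely with a one-line projection argument (what it calls ``plane geometry''): all $Z(F_i)$ lie in a sector of angular width $\pi\epsilon$, so projecting onto the bisecting ray gives
\[
|Z_\sigma(E)| \;\ge\; \sum_i m_i\cos\bigl(\pi(\phi_i-\text{bisector})\bigr) \;\ge\; \sum_i m_i\cos(\pi\epsilon/2) \;=\; m_\sigma(E)\cos(\pi\epsilon/2).
\]
This is both simpler and correct for all $n$. Replacing your pairwise-sum estimate with this projection step fixes the proof; everything else you wrote (the $\bC$-normalization, the existence/uniqueness of $\theta$, and the Taylor expansion) is fine.
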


\begin{proof}
    Let $E$ have HN factors $F_1,\ldots, F_n$. Since $\phi_\sigma^+(E) - \phi_\sigma^-(E) \le \epsilon$, $Z(F_1),\ldots, Z(F_n)$ all lie in some rotation of $\bH$. Consequently, $Z(E) = \sum_{i=1}^n Z(F_i)$ is nonzero and $\log \lvert Z(E)\rvert$ is defined. $\theta \in [\phi_\sigma^-(E),\phi_\sigma^+(E)]$ allows us to choose a branch cut defining the logarithm $\logZ_\sigma(E)$ with the desired properties. Next, using plane geometry one has 
    \begin{align*}
    \lvert \Re(\ell_\sigma(E) - \logZ_\sigma(E)) \rvert & = \lvert \log m_\sigma(E)/\lvert Z(E)\rvert \rvert \le \lvert \log \cos(\pi\epsilon/2)\rvert \\
    & \le \lvert \cos(\pi\epsilon/2) - 1\rvert \le \frac{(\pi\epsilon)^2}{2^2\cdot 2!} + \frac{(\pi\epsilon)^4}{2^4\cdot 4!} + O(\epsilon^6).
    \end{align*}
    Since $\theta,\phi_\sigma(E) \in [\phi_\sigma^-(E),\phi_\sigma^+(E)]$, one has $\lvert \Im(\ell_\sigma(E) - \logZ_\sigma(E))\rvert = \pi \lvert \phi_\sigma(E) - \theta\rvert \le \pi \epsilon$.
\end{proof}

By \Cref{L:logZcomp}, as $\epsilon \to 0$, $\lvert \ell_\sigma(E) - \logZ_\sigma(E)\rvert \to 0$. In particular, for the limit semistable objects that we consider below $\ell_{\sigma_t}(E)$ and $\logZ_{\sigma_t}(E)$ are essentially equivalent for $t$ sufficiently large. 

\subsection{Quasi-convergent paths}

In this section we consider a continuous map $\sigma_\bullet:[a,\infty)\to \Stab(\cC)$. We write $\sigma_t:= (Z_t,\cP_t)$ for its value at $t\in [a,\infty)$, and for $E \in \cC$ we put $\phi_t^+(E) = \phi_{\sigma_t}^+(E)$ and $\phi_t^-(E) = \phi_{\sigma_t}^-(E)$. 

\begin{defn}[Limit semistable objects]
\label{D:lss}
An object $E\in \cC$ is called \emph{limit semistable} if it is non-zero and $\lim_{t\to\infty} \phi_t^+(E)-\phi_t^-(E)=0$. 
\end{defn}

Note that for a limit semistable object $E$, $\ell_t(E)$ is continuous for all $t\gg 0$ by \Cref{L:logZcomp} --- in fact it is continuous for all $t$ by the results of \cite{deformedmass}. We consider germs of real $C^0$ functions at infinity, i.e., elements of $C^0_\infty(\bR) := \varinjlim C^0((a,\infty),\bR)$.\endnote{For $a<b$, $C^0((a,\infty),\bR) \to C^0((b,\infty),\bR)$ is the restriction map.} Write $f\approx g$ if $\lim_{t\to\infty} f(t)-g(t)=0$. This defines an equivalence relation on $C_\infty^0(\bR)$. Given $f \in C^0_\infty(\bR)$, denote by $\lss_{\sigma_\bullet}(f)\subset \cC$ the full subcategory containing $0$ and all limit semistable objects $E\in \cC$ such that $\phi_t^{\pm}(E)\approx f$. We usually omit the $\sigma_\bullet$ from the notation. Note that $\lss(f)$ depends only on the class of $f$ modulo $\approx$.

\begin{lem}
$\lss(f)$ is an extension closed and thus additive subcategory of $\cC$. Moreover, every limit semistable object belongs to a unique $\lss(f)$.
\end{lem}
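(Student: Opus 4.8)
The plan is to reduce both assertions to the standard behaviour of extreme phases under exact triangles, together with the continuity of $t\mapsto \phi_t(E)$ for limit semistable $E$ recorded in the remark after \Cref{D:lss}.

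\emph{Extension-closedness.} Suppose $E'\to E\to E''$ is a triangle in $\cC$ with $E',E''\in \lss(f)$. If any of $E,E',E''$ is zero, then $E$ is isomorphic to one of $0,E'',E'$, all of which lie in $\lss(f)$, so we may assume all three are nonzero. I will invoke the standard "seesaw" inequalities for a slicing: for any triangle $A\to B\to C$ of nonzero objects and any $t$,
\[
\phi_t^+(B)\le \max\{\phi_t^+(A),\phi_t^+(C)\}\qquad\text{and}\qquad \phi_t^-(B)\ge \min\{\phi_t^-(A),\phi_t^-(C)\}.
\]
These follow from slicing axiom (2), which forces any morphism from an object all of whose HN phases exceed $\psi$ to an object all of whose HN phases lie below $\psi$ to vanish, so that the HN filtration of $B$ is squeezed between those of $A$ and $C$. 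Applying this to $E'\to E\to E''$ and subtracting $f(t)$, the two right-hand sides tend to $0$ because $\phi_t^\pm(E'),\phi_t^\pm(E'')\approx f$; combined with $\phi_t^-(E)\le\phi_t^+(E)$ this yields $\lim_{t\to\infty}(\phi_t^\pm(E)-f(t))=0$. In particular $\phi_t^+(E)-\phi_t^-(E)\to 0$, so $E$ is limit semistable with $\phi_t^\pm(E)\approx f$, i.e. $E\in\lss(f)$. Since $\lss(f)$ is a full subcategory containing $0$ and closed under extensions, applying this to the split triangle $E'\to E'\oplus E''\to E''$ shows it is closed under finite biproducts, hence is an additive subcategory of $\cC$.

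\emph{Uniqueness of the class.} Let $E$ be a (necessarily nonzero) limit semistable object. By the remark after \Cref{D:lss}, $t\mapsto\phi_t(E)$ is continuous for $t\gg 0$; let $f\in C^0_\infty$ be its germ. Since the HN phases of $E$ all lie in $[\phi_t^-(E),\phi_t^+(E)]$, so does the weighted average $\phi_t(E)$, whence $\lvert\phi_t^\pm(E)-f(t)\rvert\le \phi_t^+(E)-\phi_t^-(E)\to 0$, so $\phi_t^\pm(E)\approx f$ and $E\in\lss(f)$. If also $E\in\lss(g)$ for some $g\in C^0_\infty$, then $\phi_t^+(E)\approx f$ and $\phi_t^+(E)\approx g$, so $f\approx g$ and therefore $\lss(f)=\lss(g)$, as $\lss(-)$ depends only on the class of its argument modulo $\approx$. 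Hence $E$ belongs to exactly one subcategory of the form $\lss(f)$.

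The only genuinely delicate point is the seesaw inequality for $\phi^\pm$ under triangles and the attendant bookkeeping when some of $E',E,E''$ vanish (e.g. $E=0$ in a triangle of otherwise nonzero objects would force $E''\cong E'[1]$, which is harmless here since $0\in\lss(f)$ anyway); the rest is unwinding the definitions, and I anticipate no real obstacle.
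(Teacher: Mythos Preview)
Your proof is correct and follows essentially the same approach as the paper: the paper's argument for extension-closedness is exactly the seesaw inequality $\min\{\phi_t^-(E),\phi_t^-(F)\}\le\phi_t^-(X)\le\phi_t^+(X)\le\max\{\phi_t^+(E),\phi_t^+(F)\}$ for a triangle $E\to X\to F$, and the uniqueness claim is dispatched (in an endnote) via exactly your argument of taking $f(t)=\phi_t(E)$ and applying transitivity of $\approx$. Your version is more carefully written, handling the degenerate zero-object cases and spelling out why additivity follows, but there is no substantive difference in strategy.
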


\begin{proof}
The first claim is by  $\min\{\phi_t^-(E),\phi_t^-(F)\} \leq \phi_t^-(X) \leq \phi_t^+(X) \leq \max\{\phi_t^+(E),\phi_t^+(F)\}$ for any exact triangle $E\to X\to F$. The second claim is immediate.\endnote{If $E$ is limit semistable, then $f(t)=\phi_t(E)$ defines a function such that $\phi_t^+(E)\approx f(t)\approx \phi_t^-(E)$. If $g(t)$ is another such function, then transitivity of $\approx$ implies $f \approx g$.}
\end{proof}

\begin{defn}[Quasi-convergence] \label{D:exit_sequence}
$\sigma_\bullet$ is called \emph{quasi-convergent}
if:
\begin{enumerate}[label=(\roman*)]
    \item For any $E \in \cC$, there exists a filtration $0=E_0 \to E_1 \to \cdots \to E_n = E$ such that the subquotients $G_i:= \Cone (E_{i-1}\to E_i)$ are limit semistable, and for all $i$
    \[
    \liminf_{t\to \infty} \phi_t(G_{i})-\phi_t(G_{i+1}) > 0.
    \]
    We refer to this as a \emph{limit HN filtration} of $E$.\vspace{2mm}
    
    \item \label{I:semistable_difference} For any pair of limit semistable objects $E$ and $F$, 
    \begin{equation*} 
        \lim_{t \to \infty} \frac{\ell_t(E/F)}{1+\lvert \ell_t(E/F)\rvert}\:\:\:\text{exists.}
    \end{equation*}
\end{enumerate}
\end{defn}

\begin{rem}
    To interpret the latter condition of \Cref{D:exit_sequence}, observe the following: Let $z_t$ denote a path in $\bC$ such that
    \[
        \lim_{t\to\infty} \frac{z_t}{1+\lvert z_t\rvert}
    \]
    exists in $\bb{C}$. Then either $z_t$ converges in $\bC$, or $|z_t|$ diverges to $\infty$ and $\arg(z_t) \in \bR / 2\pi \bZ$ converges.
\end{rem}

\begin{lem}
\label{L:LimitHNofsum}
For any pair of limit semistable objects $E,F$, exactly one of the
following holds:
\begin{enumerate}
    \item $E\oplus F$ is limit semistable.\vspace{2mm}
    \item $E \to E\oplus F \to F \to E[1]$ is a limit HN filtration of $E \oplus F$. \vspace{2mm}
    \item $F \to E \oplus F \to E \to F[1]$ is a limit HN filtration of $E \oplus F$.
\end{enumerate}
\end{lem}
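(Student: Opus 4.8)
The plan is to reduce the entire statement to the asymptotics of the single real function $\delta(t):=\phi_t(E)-\phi_t(F)$. Since $\ell_t(X)=\log m_t(X)+i\pi\phi_t(X)$ by definition, one has $\Im\,\ell_t(E/F)=\pi\,\delta(t)$, so the behaviour of $\delta$ is precisely what quasi-convergence controls. First I would translate the three options into conditions on $\delta$. The split triangle $F\to E\oplus F\to E\to F[1]$ is a filtration of $E\oplus F$ whose subquotients are $F$ and $E$, both limit semistable; thus the \emph{only} extra requirement in option (3) is the phase-gap condition of \Cref{D:exit_sequence}(i), namely $\liminf_{t\to\infty}(\phi_t(E)-\phi_t(F))=\liminf_{t\to\infty}\delta(t)>0$. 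Symmetrically, option (2) says exactly $\limsup_{t\to\infty}\delta(t)<0$. Finally, since each $\cP_t(\phi)$ is additive, the HN filtration of $E\oplus F$ is the ordered interleaving of those of $E$ and $F$, so $\phi_t^+(E\oplus F)=\max\{\phi_t^+(E),\phi_t^+(F)\}$ and $\phi_t^-(E\oplus F)=\min\{\phi_t^-(E),\phi_t^-(F)\}$; by limit semistability of $E$ and $F$ these differ from $\max\{\phi_t(E),\phi_t(F)\}$ and $\min\{\phi_t(E),\phi_t(F)\}$ by $o(1)$, so $\phi_t^+(E\oplus F)-\phi_t^-(E\oplus F)=|\delta(t)|+o(1)$ and option (1) holds if and only if $\delta(t)\to 0$. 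Mutual exclusivity is then immediate: a real function cannot simultaneously tend to $0$, have positive $\liminf$, and have negative $\limsup$. So the whole content is that at least one of the three regimes occurs.

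For that I would use condition (ii) of quasi-convergence applied to the pair $(E,F)$: the limit of $\ell_t(E/F)/(1+|\ell_t(E/F)|)$ exists, so by the dichotomy recorded in the endnote to \Cref{D:exit_sequence}, either $\ell_t(E/F)$ converges in $\bC$, or $|\ell_t(E/F)|\to\infty$ while $\arg\ell_t(E/F)$ converges in $\bR/2\pi\bZ$, say to $\alpha$. In the first case $\delta(t)=\tfrac1\pi\Im\,\ell_t(E/F)$ tends to a finite real limit and we land in regime $(1)$, $(2)$, or $(3)$ according to its sign. In the second case, if $\sin\alpha\ne 0$ then $\Im\,\ell_t(E/F)=|\ell_t(E/F)|\sin(\arg\ell_t(E/F))$ has absolute value $\to\infty$ with eventually constant sign, so $\delta(t)\to+\infty$ or $\delta(t)\to-\infty$, and again $(3)$ or $(2)$ holds.

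The remaining, and I expect hardest, case is $\sin\alpha=0$, i.e.\ $\arg\ell_t(E/F)\to 0$ or $\pi$; this forces $\log(m_t(E)/m_t(F))\to\pm\infty$ but only tells us $\delta(t)=o\!\left(\log(m_t(E)/m_t(F))\right)$, which does not by itself pin down the asymptotics of $\delta$. To close this case I would invoke condition (i): $E\oplus F$ admits \emph{some} limit HN filtration, and one must classify limit HN filtrations of a direct sum of two limit semistable objects. The key input is again that the genuine HN factors of $E\oplus F$ are the union of those of $E$ and of $F$, so their phases cluster near $\phi_t(E)$ and near $\phi_t(F)$; together with the interleaving description and the strict asymptotic phase gaps built into \Cref{D:exit_sequence}(i), this should force any limit HN filtration of $E\oplus F$ to have length $\le 2$ and to coincide with one of the two split triangles (or else $E\oplus F$ is itself limit semistable), which in turn forces $\delta(t)$ into one of the three regimes. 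Reconciling conditions (i) and (ii) in this degenerate case — where the masses separate but the phases need not — is the main technical obstacle; the rest is routine manipulation of $\ell_t$ and of the subadditivity/additivity of $\phi^{\pm}$ under direct sums and extensions.
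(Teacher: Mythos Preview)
Your argument is correct, but the route through condition~(ii) is a detour that the paper avoids entirely. The paper's proof uses \emph{only} condition~(i): it starts by assuming $E\oplus F$ is not limit semistable, takes its limit HN filtration $\{G_i\}_{i=1}^n$, and then uses two facts you also identify --- that for $t\gg 0$ the $\sigma_t$-HN filtration of $E\oplus F$ refines the limit HN filtration, and that the $\sigma_t$-HN factors of $E\oplus F$ are exactly the $\sigma_t$-HN factors $X_{i,t}$ of $E$, $Y_{j,t}$ of $F$, or sums $X_{i,t}\oplus Y_{j,t}$ when phases coincide. Since all the $X_{i,t}$ have phase within $c/4$ of $\phi_t(E)$ and similarly for the $Y_{j,t}$, while consecutive $G_i$ are separated by at least $c$, the $\sigma_t$-factors of any single $G_k$ are forced to be either all the $X_{i,t}$'s, all the $Y_{j,t}$'s, or all of both; hence $n\le 2$ and the filtration is one of the split triangles. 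This is exactly what you sketch for your ``hardest case,'' but it in fact disposes of \emph{every} case at once, making the earlier analysis via $\arg\ell_t(E/F)$ superfluous. Your approach buys a clean translation of the trichotomy into conditions on $\delta(t)$, which is conceptually pleasant and foreshadows \Cref{C:LSScomparable}; the paper's approach buys brevity by going straight to the structural argument.
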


\begin{proof}
Suppose $E\oplus F$ is not limit semistable and has limit HN factors $\{G_i\}_{i=1}^n$. Let
$\{X_{i,t}\}$ and $\{Y_{j,t}\}$ denote the $\sigma_t$-HN factors of $E$ and $F$,
respectively. $\forall t$, the $\sigma_t$-HN filtration of $E\oplus F$ has terms of the form
$X_{i,t}, Y_{j,t}$, or $X_{i,t}\oplus Y_{j,t}$ when $\phi_t(X_{i,t}) = \phi_t(Y_{j,t})$. Fix $c>0$ such that $\forall t\gg 0$, $\min_i\{\phi_t(G_i) - \phi_t(G_{i+1})\} > c$, $\max_i\{\phi^+_t(G_i) - \phi^-_t(G_i)\}<c/4$, and $\max\{\phi_t^+(E) - \phi_t^-(E) , \phi^+_t(F) - \phi_t^-(F)\} < c/4$. 

$\forall t\gg0$, the $\sigma_t$-HN filtration of $E\oplus F$ is the concatenation of the $\sigma_t$-HN filtrations of the $G_i$ for all $i$.\endnote{
\begin{prop}
\label{P:endnoteprop}
Let $\sigma_t$ be a quasi-convergent path, $E \in \cC$, and 
$E_1 \to \cdots \to E_n \to E$ be the limit HN filtration 
of $E$ with limit HN factors $\{G_i\}$. Then 
$\forall t \gg 0$, the $\sigma_t$-HN filtration of  $E$ is the concatenation of
$\sigma_t$-HN filtrations of the $G_i$.
In particular, for all $t \gg 0$, the $\sigma_t$-HN filtration of $E$ is obtained by 
refining its limit HN filtration.
\end{prop}

\begin{proof}
This follows from uniqueness of $\sigma_t$-HN filtrations and the fact that for all $t\gg0$, $\phi_t^-(G_i)>\phi_t^+(G_{i+1})$: By assumption, there is $c > 0$ with 
\[
c < \liminf_{t\to\infty}\phi_t(G_i)-\phi_t(G_{i+1})
\]
By taking $t \gg 0$, we have that both $\phi_t(G_i)-\phi_t(G_{i+1}) > c$ and 
$|\phi^\pm_t(G_i)-\phi_t(G_i)| < c/2$. This then implies that 
$\phi^-_t(G_i) > \phi^+_t(G_i+1)$.
\end{proof}
} In particular, $G_1$ has some $X_{i,t}$, $Y_{j,t}$, or $X_{i,t}\oplus Y_{j,t}$ as a factor. If some $X_{i',t}$ is a $\sigma_t$-factor of $G_1$, then all of $\{X_{i,t}\}$ are and $\{Y_{j,t}\}$ are all $\sigma_t$-HN factors of $G_n$, so that $n\le 2$ and (1) or (2) holds.\endnote{If $X_{i',t}$ is a $\sigma_t$-factor of $G_1$, then $\phi(X_{i',t}) > \phi_t(G_1) - c/4$. However, since $\phi^+_t(E) - \phi_t^-(E) < c/4$, it follows that for all $i$, $\phi(X_{i,t}) \in (\phi_t(G_1)-c/4,\phi_t(G_1) + c/4)$. Consequently, the $X_{i,t}$ are all factors of $G_1$. The case $n=2$ occurs when the $Y_{j,t}$ are (all) factors of some $G_n$ for $n\ne 1$. Otherwise, they are all also factors of $G_1$ and so $E\oplus F$ is limit semistable.} 
If some $X_{i,t}\oplus Y_{j,t}$ is a $\sigma_t$-factor of $G_1$, then all $X_{i,t}$ and $Y_{j,t}$ are so that $n=1$ and (1) holds. (3) holds when some $Y_{j',t}$ is a factor of $G_1$ but no $X_{i',t}$ is.
\end{proof}

\begin{cor}
\label{C:LSScomparable}
If $\sigma_\bullet$ is quasi-convergent then for any limit semistable objects $E,F\in \cC$, exactly one of the following holds:
\begin{enumerate} 
\item $\lim_{t\to \infty} \phi_t(E)-\phi_t(F)=0$, \vspace{2mm}
\item $\liminf_{t\to \infty} \phi_t(E)-\phi_t(F) > 0$, \vspace{2mm}
\item $\liminf_{t\to \infty} \phi_t(F)-\phi_t(E)>0$.
\end{enumerate}
\end{cor}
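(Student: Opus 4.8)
The plan is to deduce \Cref{C:LSScomparable} from \Cref{L:LimitHNofsum} by analyzing the limit HN filtration of $E\oplus F$ in each of the three cases, together with the observation (made in \Cref{P:endnoteprop} of the endnotes) that for all $t\gg 0$ the $\sigma_t$-HN filtration of $E\oplus F$ refines its limit HN filtration, so the phases $\phi_t(G_i)$ of the limit HN factors control the phases $\phi_t(E)$ and $\phi_t(F)$.

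\begin{proof}
First observe that the three alternatives are mutually exclusive: if $\lim_{t\to\infty}\phi_t(E)-\phi_t(F)=0$ then neither $\liminf$ can be positive, and $\liminf_{t\to\infty}\phi_t(E)-\phi_t(F)>0$ and $\liminf_{t\to\infty}\phi_t(F)-\phi_t(E)>0$ cannot both hold since their sum would be $\liminf$ of the zero function. So it remains to show at least one holds. Apply \Cref{L:LimitHNofsum} to the pair $E,F$.

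If case (1) of \Cref{L:LimitHNofsum} holds, then $E\oplus F$ is limit semistable, so $\phi_t^+(E\oplus F)-\phi_t^-(E\oplus F)\to 0$. Since $\phi_t(E),\phi_t(F)\in[\phi_t^-(E\oplus F),\phi_t^+(E\oplus F)]$ for all $t$ (the HN factors of $E$ and of $F$ are among the $\sigma_t$-HN factors of $E\oplus F$, up to forming direct sums of equal-phase pieces), we get $|\phi_t(E)-\phi_t(F)|\le \phi_t^+(E\oplus F)-\phi_t^-(E\oplus F)\to 0$, which is alternative (1).

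If case (2) of \Cref{L:LimitHNofsum} holds, then $E\to E\oplus F\to F\to E[1]$ is a limit HN filtration, so with $G_1=E$, $G_2=F$ we have $\liminf_{t\to\infty}\phi_t(G_1)-\phi_t(G_2)=\liminf_{t\to\infty}\phi_t(E)-\phi_t(F)>0$ by the definition of limit HN filtration (\Cref{D:exit_sequence}(i)), which is alternative (2). Symmetrically, case (3) of \Cref{L:LimitHNofsum} gives $\liminf_{t\to\infty}\phi_t(F)-\phi_t(E)>0$, which is alternative (3). Since \Cref{L:LimitHNofsum} asserts exactly one of its three cases holds, exactly one of (1), (2), (3) holds here, completing the proof.
\end{proof}

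The only subtle point, and the one I would be most careful about, is the claim that $\phi_t(E)$ and $\phi_t(F)$ lie between $\phi_t^-(E\oplus F)$ and $\phi_t^+(E\oplus F)$ in case (1): this uses that the $\sigma_t$-HN factors of $E\oplus F$ are obtained by concatenating those of $E$ and $F$ and merging factors of equal phase, so the maximal and minimal phases of $E\oplus F$ bound those of $E$ and of $F$, hence bound their mass-weighted averages $\phi_t(E),\phi_t(F)$. This is exactly the inequality $\min\{\phi_t^-(E),\phi_t^-(F)\}\le \phi_t^-(E\oplus F)$ etc.\ already used in the proof that $\lss(f)$ is extension-closed, applied to the split triangle $E\to E\oplus F\to F$. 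Everything else is formal manipulation of $\liminf$s, so I expect no real obstacle beyond bookkeeping.
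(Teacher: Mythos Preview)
Your proof is correct and follows essentially the same route as the paper: reduce to the trichotomy of \Cref{L:LimitHNofsum} for $E\oplus F$, and read off each alternative directly. You supply more detail than the paper (explicitly checking mutual exclusivity and spelling out why $\phi_t(E),\phi_t(F)\in[\phi_t^-(E\oplus F),\phi_t^+(E\oplus F)]$ in case (1)), but the argument is the same.
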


\begin{proof}
Consider $E\oplus F$. In case (1) of \Cref{L:LimitHNofsum}, $\phi_t(E)\approx \phi_t(F)$. In case (2), we have HN filtration $E\to E\oplus F\to F$, so that by definition $\liminf_{t\to\infty} \phi_t(E)-\phi_t(F)>0$. In case (3), we similarly conclude $\liminf_{t\to\infty} \phi_t(F)-\phi_t(E)>0$. 
\end{proof}

In what follows, we consider a fixed quasi-convergent path $\sigma_\bullet$ in $\Stab(\cC)$. Define a relation $\gtrsim$ on $C^0_\infty(\bR)$ by $f\gtrsim g$ if
\begin{equation} \label{E:ordering}
\liminf_{t\to\infty}f(t)-g(t)\ge 0.    
\end{equation}
Note that $f\lesssim g$ and $f\gtrsim g$ is equivalent to $f\approx g$, so $\gtrsim$ descends to a partial order on $C_\infty^0(\bR)/{\approx}$.\endnote{This is not a total order, since taking $f(t)$ and $g(t)$ such that $f(t)-g(t)=\cos t$ we see $f\not\gtrsim g$ and $f\not\lesssim g$.} We define
\[
\mathfrak{A} := \{[\phi_t(E)]\in C_\infty^0(\bR)/{\approx}: E\:\text{is limit semistable}\}.
\]
Write $f>g$ when the inequality \eqref{E:ordering} is strict.
\begin{cor}\label{C:ordering_A}
For $f,g \in \mathfrak{A}$, $f <g$ is the negation of $f \gtrsim g$, and $\gtrsim$ defines a total order on $\mathfrak{A}$.
\end{cor}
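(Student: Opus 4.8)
The plan is to derive everything from the trichotomy supplied by \Cref{C:LSScomparable}. Fix limit semistable objects $E,F$ and set $f=[\phi_t(E)]$, $g=[\phi_t(F)]$ in $\mathfrak{A}$. By \Cref{C:LSScomparable}, exactly one of the following holds: \textbf{(a)} $\phi_t(E)\approx\phi_t(F)$, i.e.\ $f=g$ in $C^0_\infty/{\approx}$; \textbf{(b)} $\liminf_{t\to\infty}(\phi_t(E)-\phi_t(F))>0$, i.e.\ $f>g$; \textbf{(c)} $\liminf_{t\to\infty}(\phi_t(F)-\phi_t(E))>0$, i.e.\ $f<g$. So on $\mathfrak{A}$ the relations $=$, $>$, $<$ obey strict trichotomy, even though $\gtrsim$ is only a partial order on all of $C^0_\infty/{\approx}$.

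First I would show that $f\gtrsim g$ holds precisely in cases (a) and (b). Cases (a) and (b) each immediately give $\liminf_{t\to\infty}(f(t)-g(t))\ge 0$. Conversely, in case (c) one has $\liminf_{t\to\infty}(f(t)-g(t)) = -\limsup_{t\to\infty}(g(t)-f(t)) \le -\liminf_{t\to\infty}(g(t)-f(t)) < 0$, so $f\not\gtrsim g$; since the three cases are mutually exclusive and exhaustive, $f\gtrsim g$ forces (a) or (b). Hence $\neg(f\gtrsim g)$ is equivalent to case (c), which is by definition $f<g$. This is the first assertion of \Cref{C:ordering_A}.

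For the total order: $\gtrsim$ is already a partial order on $C^0_\infty/{\approx}$ (reflexivity and transitivity of $\liminf$, antisymmetry by the remark preceding the corollary), so it restricts to a partial order on $\mathfrak{A}$. Totality is then immediate from the trichotomy: given $f,g\in\mathfrak{A}$, one of (a), (b), (c) holds, and each implies $f\gtrsim g$ or $g\gtrsim f$ (case (a) gives both, case (b) gives $f\gtrsim g$, case (c) gives $g\gtrsim f$). I do not expect a genuine obstacle here: the entire content sits in \Cref{C:LSScomparable}, and the only point requiring care is the elementary step — \emph{false} for arbitrary germs, true for germs of phases of limit semistable objects by trichotomy — that $\liminf(g-f)>0$ implies $\liminf(f-g)<0$, which uses $\limsup(g-f)\ge\liminf(g-f)$ together with $\liminf(f-g)=-\limsup(g-f)$.
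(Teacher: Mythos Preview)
Your proposal is correct and is exactly the paper's approach: the paper's proof is the single sentence ``This follows from \Cref{C:LSScomparable},'' and you have simply unpacked that reference into the trichotomy argument. The only subtlety you flag --- that $\liminf(g-f)>0$ forces $\liminf(f-g)<0$ via $\liminf(f-g)=-\limsup(g-f)\le -\liminf(g-f)$ --- is the right detail to check, and it goes through as you wrote.
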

\begin{proof}
This follows from \Cref{C:LSScomparable}.
\end{proof}
So, every object $E\in \cC$ has a limit HN filtration with subquotients $G_i\in \lss(\phi_t(G_i))$ such that $\phi_t(G_1)> \cdots > \phi_t(G_n)$ in $\mathfrak{A}$. It follows from the definition that $\Hom_{\cC}(\lss(f),\lss(g))=0$ whenever $f>g$ in $\mathfrak{A}$. Also, if $E\in \lss(f)$, then $E[1]\in \lss(f+1)$. In particular, the collection $\{\lss(f)\}_{f\in \mathfrak{A}}$ defines a t-stability in the sense of \cite{GKRpaper04}. This implies the following key properties, analogous to the ones for slicings:

\begin{prop}
\label{P:tstab}
Suppose  $\sigma_\bullet$ is quasi-convergent.
\begin{enumerate} 
    \item HN filtrations by limit semistable objects are unique up to unique isomorphism of Postnikov systems; \vspace{2mm}
    \item Given $X\in \cC$ with a filtration by $0=X_0\to X_1\to\cdots \to X_n = X$ such that $\Cone(X_{i-1}\to X_i)=Y_i$ has limit HN filtration with subquotients $(Y_{i,1},\ldots, Y_{i,m_i})$ and $\phi_t^-(Y_i)>\phi_t^+(Y_{i+1})$ for each $i$, the limit HN filtration of $X$ has subquotients 
    \[
        (Y_{1,1},\ldots, Y_{1,m_1},Y_{2,1},\ldots, Y_{2,m_2},\ldots, Y_{n,1},\ldots, Y_{n,m_n}).
    \]
    \item If $F$ and $G$ have limit HN filtrations with subquotients $\{A_i\}$ and $\{B_j\}$, respectively, then the subquotients of the limit HN filtration of $F\oplus G$ are 
    \[
        \{A_i:\phi_t(A_i)\not\approx \phi_t(B_j)\:\forall B_j\}\cup \{B_j:\phi_t(B_j)\not\approx \phi_t(A_k)\:\forall A_k\}
    \]
    \[
        \cup\:\{A_i\oplus B_j:\phi_t(A_i)\approx \phi_t(B_j)\}.
    \]
\end{enumerate}
\end{prop}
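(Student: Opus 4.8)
The plan is to read off (1) and (2) from the general theory of t-stabilities in \cite{GKRpaper04}, using that $\{\lss(f)\}_{f\in\mathfrak{A}}$ has just been verified to be a t-stability (the index set $\mathfrak{A}$ is totally ordered by \Cref{C:ordering_A}, $\lss(f)[1]=\lss(f+1)$, $\Hom_{\cC}(\lss(f),\lss(g))=0$ for $f>g$, and every object has a filtration with limit semistable subquotients of strictly decreasing phase by \Cref{D:exit_sequence} together with \Cref{C:ordering_A}); statement (3) I would then obtain by an explicit construction followed by an appeal to (1). For (1), the uniqueness of a filtration of $E$ by objects of strictly decreasing phase with subquotients in such a semiorthogonal-type family is exactly the uniqueness clause in the theory of t-stabilities, so one may simply cite \cite{GKRpaper04}. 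If a self-contained argument is preferred, it is the verbatim transcription of the proof that Harder--Narasimhan filtrations for a slicing are unique: given two such filtrations of $E$ with subquotients $(G_i)$ and $(G'_j)$, the maximal-phase subobject is $G_1\cong G'_1$ by $\Hom$-vanishing and totality of the order on $\mathfrak{A}$, and one induces on length, the uniqueness of the isomorphism of Postnikov systems following in the same way.

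For (2), I would build the asserted filtration by splicing the Postnikov system of $X_\bullet$ with the limit HN filtrations of the $Y_i$ via repeated use of the octahedral axiom, producing a filtration with the stated subquotients $(Y_{1,1},\dots,Y_{n,m_n})$, each limit semistable by hypothesis. It then remains only to check that consecutive phases are strictly decreasing in $\mathfrak{A}$ along the concatenated list: inside a single block this is the definition of a limit HN filtration, and across blocks one invokes the endnote to \Cref{L:LimitHNofsum}, by which for $t\gg 0$ the $\sigma_t$-HN filtration of $Y_i$ refines its limit HN filtration; hence $\phi^-_t(Y_i)=\phi^-_t(Y_{i,m_i})\approx\phi_t(Y_{i,m_i})$ and likewise $\phi^+_t(Y_{i+1})\approx\phi_t(Y_{i+1,1})$, so $\phi_t(Y_{i,m_i})-\phi_t(Y_{i+1,1})\approx\phi^-_t(Y_i)-\phi^+_t(Y_{i+1})$ has positive $\liminf$ by hypothesis. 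Thus the constructed filtration is a limit HN filtration, and by (1) it is the limit HN filtration of $X$.

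For (3), write the limit HN factors of $F$ and $G$ as $A_1,\dots,A_p$ and $B_1,\dots,B_q$ with strictly decreasing phases. By \Cref{C:LSScomparable} and \Cref{C:ordering_A} each pair $(A_i,B_j)$ satisfies exactly one of $\phi_t(A_i)>\phi_t(B_j)$, $\phi_t(A_i)\approx\phi_t(B_j)$, $\phi_t(A_i)<\phi_t(B_j)$, and strict decrease within each list forces the relation $\approx$ to match the $A$'s and $B$'s injectively, so merging the two lists by phase and replacing a matched pair $\{A_i,B_j\}$ by $A_i\oplus B_j$ (which is limit semistable by \Cref{L:LimitHNofsum}(1)) yields a list $C_1,\dots,C_r$ of limit semistable objects with strictly decreasing phases. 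To filter $F\oplus G$ with these subquotients I would induct on $p+q$: if the top phase is attained only by $F$, peel off the first subquotient $A_1$ using $\Cone\big((A_1\to F,\,0)\colon A_1\to F\oplus G\big)\cong (F/A_1)\oplus G$ and recurse; symmetrically if only by $G$; and if $\phi_t(A_1)\approx\phi_t(B_1)$, peel off $A_1\oplus B_1$, whose cone in $F\oplus G$ is $(F/A_1)\oplus(G/B_1)$ — in each case the top phases of the remaining summands match the truncated lists, by transitivity of $\gtrsim$. By (1), the resulting filtration is the limit HN filtration of $F\oplus G$, which is precisely the claim.

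The main obstacle is bookkeeping rather than a genuine difficulty: one is working in the ``$\liminf$ world'', where $\gtrsim$ is only a partial order on $C^0_\infty$ and becomes total only after restriction to $\mathfrak{A}$, so every phase comparison invoked above must be traced back to \Cref{C:LSScomparable}/\Cref{C:ordering_A}, and in (2) the passage between the germs $\phi_t^\pm(Y_i)$ and the phases of the extremal limit HN factors must go through the endnote proposition. Once the t-stability formalism is in place and uniqueness (1) is established, there is nothing deeper: (2) and (3) reduce to constructing the evident filtration and then citing (1).
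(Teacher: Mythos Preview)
Your proposal is correct and essentially matches the paper's approach: the paper's entire proof is the sentence ``The claims are immediate by Thm.~4.1, Prop.~4.3(1), and Prop.~4.3(3) of \cite{GKRpaper04}, respectively,'' so all three parts, including (3), are delegated to the t-stability formalism. Your explicit arguments for (2) and (3) are fine and simply unwind what those cited results say; in particular the paper does not carry out the merge-and-induct construction for (3) by hand but absorbs it into the citation of \cite{GKRpaper04}*{Prop.~4.3(3)}.
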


\begin{proof}
    The claims are immediate by Thm. 4.1, Prop. 4.3(1), and Prop 4.3(3) of \cite{GKRpaper04}, resp.
\end{proof}

\begin{notn}
\label{N:slicingnotation}
By analogy with the usual notion of slicing from \cite{Br07}, $\lss_{\sigma_\bullet}$ denotes the collection of all nonzero limit semistable objects with respect to $\sigma_\bullet$. That is,
\[
\lss_{\sigma_\bullet} = \bigcup_{f\in \mathfrak{A}}\lss_{\sigma_\bullet}(f)\setminus \{0\}
\]
As before, we omit $\sigma_\bullet$ when it is implicit, writing $\lss$ instead. Given a set $S\subset \mathfrak{A}$, $\lss(S)$ denotes the full subcategory of objects $E$ with $a\lesssim \phi_t^-(E) \lesssim \phi_t^+(E)\lesssim b$ for some $a,b\in S$.
\end{notn}

\begin{rem}
Consider a path $\sigma_\bullet$ in $\Stab(\cC)$.
\begin{enumerate}
    \item In the case where $\sigma_\bullet$ is convergent, it is also quasi-convergent in our sense. In this case, $\mathfrak{A}$ consists of germs of constant functions and is thus identified with a subset of $\bR$. So, $\{\lss(f)\}_{f\in \mathfrak{A}}$ defines a slicing in the sense of \cite{Br07}. \vspace{2mm}
    \item Woolf \cite{Woolfmetric} defines a similar notion of \emph{limiting semistable object} $E$ of phase $\theta$ with respect to a path $\sigma_\bullet$, which requires that $\phi_t^+(E)$ and $\phi_t^-(E)$ converge to some $\theta\in \bR$. This notion is subsumed by ours, since we only require that $\phi_t^+(E) - \phi_t^-(E)\to 0$.
\end{enumerate}
\end{rem}

\subsection{Preorders on \texorpdfstring{$\lss$}{P}}
In what follows, $\sigma_\bullet$ is a fixed quasi-convergent path. From $\sigma_\bullet$, we obtain a preorder $\lss$ by first analyzing the imaginary part of $\ell_t(E/F)$ and then the real part. 

\begin{lem} \label{L: preorder_possibilities}
Let $E,F \in \lss$. Exactly one of the following holds:
\begin{enumerate}
    \item $\lim \limits_{t \to \infty} \phi_t(F) - \phi_t(E) = \pm \infty$; or \vspace{2mm}
    \item $\lim_{t\to\infty} \phi_t(F) - \phi_t(E)$ exists and is an integer; or \vspace{2mm}
    \item there exists an $a\in \bZ$ such that $\limsup_{t\to\infty} \phi_t(F) - \phi_t(E) < a$ and $\liminf_{t\to\infty} \phi_t(F) - \phi_t(E) > a-1$. 
\end{enumerate}
In cases (2) and (3), one has 
\begin{enumerate}
    \item[(a)] $\lim_{t\to\infty} \log m_t(F)/m_t(E) = \pm \infty$; or  \vspace{2mm}
    \item[(b)] $\lim_{t\to\infty} \ell_t(F/E)$ exists in $\bC$.
\end{enumerate}
\end{lem}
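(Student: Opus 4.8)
`\textbf{Proof plan.}` The statement has two layers: a classification of the asymptotic behavior of $\phi_t(F)-\phi_t(E)$, and a classification of the asymptotic behavior of $\ell_t(F/E)$ in cases (2) and (3). The plan is to extract both from quasi-convergence condition \ref{I:semistable_difference}, which guarantees that $\ell_t(E/F)/(1+|\ell_t(E/F)|)$ converges, together with the interpretation spelled out in the endnote: a path $z_t$ in $\bC$ with $z_t/(1+|z_t|)$ convergent either converges in $\bC$ or has $|z_t|\to\infty$ with $\arg(z_t)$ convergent.

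\emph{First layer.} Recall $\Im \ell_t(F/E) = \pi(\phi_t(F)-\phi_t(E))$, so the imaginary part of $\ell_t(F/E)$ is, up to the constant $\pi$, exactly the quantity $\phi_t(F)-\phi_t(E)$ whose behavior we want to classify. I would apply the dichotomy from the endnote to $z_t = \ell_t(F/E)$. If $z_t$ converges in $\bC$, then $\phi_t(F)-\phi_t(E)$ converges to a real limit $\delta$; the point is then to see $\delta\in\bZ$, which should come from \Cref{C:LSScomparable} (or rather its consequence that the $\gtrsim$-order is total on $\mathfrak A$ together with the fact that $E$ and $F[\lceil\delta\rceil]$, $F[\lfloor\delta\rfloor]$ are limit semistable): if $\delta\notin\bZ$ we land in case (3), and if $\delta\in\bZ$ we land in case (2). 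If instead $|z_t|\to\infty$ with $\arg(z_t)$ convergent, then I need to rule out $\arg(z_t)\to\pm\pi/2$ being the only way to get $\Im z_t\to\pm\infty$: in fact the three cases (1),(2),(3) are arranged precisely so that (1) is ``$\phi_t(F)-\phi_t(E)\to\pm\infty$'' and (2),(3) together cover ``$\phi_t(F)-\phi_t(E)$ stays bounded, or more precisely eventually lies in a bounded window.'' So I would split on whether $\Im z_t = \pi(\phi_t(F)-\phi_t(E))$ is bounded or not. If unbounded, since $\arg z_t$ converges and is monotone-ish... actually cleaner: if $\phi_t(F)-\phi_t(E)$ is unbounded I claim it must converge to $\pm\infty$ — here is where \Cref{C:LSScomparable} applied to suitable shifts $E, F[n]$ is used, because for each $n$ exactly one of $\liminf(\phi_t(F[n])-\phi_t(E))>0$, the reverse, or $\phi_t(F[n])-\phi_t(E)\to 0$ holds, and these are incompatible with oscillation across infinitely many integer levels unless the sequence is eventually monotone past every level, i.e.\ diverges. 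If bounded, it eventually lies in some window $(a-1,a)$ or pins to an integer, giving case (2) or (3).

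\emph{Second layer.} In cases (2) and (3), $\phi_t(F)-\phi_t(E)$ is bounded, hence $\Im z_t = \pi(\phi_t(F)-\phi_t(E))$ is bounded. Now reapply the endnote dichotomy to $z_t=\ell_t(F/E)$: either $z_t$ converges in $\bC$ — which is conclusion (b), since $\lim_t \ell_t(F/E)$ exists — or $|z_t|\to\infty$; but $\Im z_t$ bounded forces $|\Re z_t| = |\log m_t(F)/m_t(E)|\to\infty$, which is conclusion (a). One should also check (a) and (b) are exclusive, which is immediate since $\log m_t(F)/m_t(E) = \Re\ell_t(F/E)$ and a convergent complex sequence has bounded real part.

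\emph{Main obstacle.} The routine part is bookkeeping with $\ell_t = \log m_t + i\pi\phi_t$ and the endnote's $\bC$-valued-path lemma. The one genuinely delicate point is showing, in the first layer, that if $\phi_t(F)-\phi_t(E)$ is \emph{unbounded} then it actually tends to $\pm\infty$ (rather than oscillating with unbounded amplitude) — and symmetrically that cases (2) and (3) are exhaustive on the bounded side. I expect this to follow from \Cref{C:LSScomparable} applied to the pairs $(E, F[n])$ for all $n\in\bZ$: for each $n$ the sign of $\liminf(\phi_t(F)-\phi_t(E)-n)$ is well-defined (positive, negative, or the quantity $\to 0$), and the monotone consistency of these signs in $n$ forces the limit superior and limit inferior of $\phi_t(F)-\phi_t(E)$ to be separated by at most $1$ — which is exactly the trichotomy (1)/(2)/(3). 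Making that consistency argument clean is the crux; everything else is a direct unwinding of condition \ref{I:semistable_difference}.
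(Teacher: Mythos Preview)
Your proposal is correct and follows essentially the same approach as the paper. Both arguments hinge on applying \Cref{C:LSScomparable} to the pairs $(E,F[n])$ for all $n\in\bZ$ to force the first-layer trichotomy (the paper phrases this as choosing the maximal $a$ with $\phi_t(F)+a-1\lesssim\phi_t(E)\lesssim\phi_t(F)+a$, which is exactly your ``monotone consistency of signs'' observation), and then invoking quasi-convergence condition \ref{I:semistable_difference} directly for the (a)/(b) dichotomy in the second layer. Your initial attempt to get the first layer from the endnote dichotomy alone would not have worked, since case (3) does not require $\phi_t(F)-\phi_t(E)$ to converge, but you correctly abandon that route in favor of the shift argument.
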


\begin{proof}
Suppose (1) does not hold. By \Cref{C:ordering_A}, we may assume $\phi_t(F)\lesssim \phi_t(E)$. There exists a maximal $a\in \bb{Z}$ so that $\phi_t(F)+ a -1\lesssim \phi_t(E) \lesssim \phi_t(F) + a$. Suppose (2) does not hold so $\liminf_{t\to\infty} \phi_t(E) - \phi_t(F) > a-1$ by \Cref{C:LSScomparable}. Similarly, $\limsup_{t\to\infty} \phi_t(F) - \phi_t(E) \le a$. If $\limsup_{t\to\infty} \phi_t(F) - \phi_t(E) = a$, then $\liminf_{t\to\infty} \phi_t(E[-a]) - \phi_t(F) = 0$ and by \Cref{C:LSScomparable} we are in case (2). Finally, if (2) or (3) holds, then (a) or (b) holds by \Cref{D:exit_sequence} \ref{I:semistable_difference}.
\end{proof}

\begin{defn} \label{D:preordering}
For 
$E,F \in \lss$, \Cref{L: preorder_possibilities} allows us to define the following relations: 
\begin{enumerate} 
\item $F \iprec E$ if $\lim_{t \to \infty} \phi_t(E) - \phi_t(F) = \infty$ and $E \ipreceq F$ otherwise;\vspace{2mm}
\item $E \isim F$ if $E \ipreceq F$ and 
$F \ipreceq E$; \vspace{2mm}
\item $F \prec E$ if either: i) $F \iprec E$, or ii) $E \isim F$ and $\lim_{t\to\infty}\log \frac{m_t(E)}{m_t(F)} = \infty$;\vspace{2mm}
\item $E\preceq F$ is the negation of $F\prec E$: $E\ipreceq F$, and if $E\isim F$ then $\lim_{t\to\infty} \log \frac{m_t(E)}{m_t(F)} <\infty$; and \vspace{2mm}
\item $E \sim F$ if $E \preceq F$ and $F \preceq E$; i.e., $\lim_{t\to\infty} \ell_t(E/F)$ exists in $\bC$. 
\end{enumerate}
\end{defn}

\begin{lem}
\label{L:orderproperties}
$\ipreceq$ and $\preceq$ are reflexive and transitive, so $\isim$ and 
$\sim$ are equivalence relations. Furthermore, the partial orders on 
$\lss/{\isim}$ and $\lss/{\sim}$
induced by $\ipreceq$ and $\preceq$, respectively, are total.
\end{lem}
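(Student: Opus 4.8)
The plan is to verify reflexivity and transitivity of $\ipreceq$ and $\preceq$ directly from \Cref{D:preordering}, using \Cref{L: preorder_possibilities} and the limit laws for the germs $\phi_t(E)$ and $\log m_t(E)$, and then to deduce totality from \Cref{C:LSScomparable} (equivalently \Cref{C:ordering_A}). For $\ipreceq$: reflexivity is immediate since $\lim_{t\to\infty}\phi_t(E)-\phi_t(E)=0\ne\infty$. For transitivity, suppose $E\ipreceq F$ and $F\ipreceq G$; I must show $E\ipreceq G$, i.e. that $\lim_{t\to\infty}\phi_t(G)-\phi_t(E)\ne\infty$. If it were $+\infty$, then since $\phi_t(G)-\phi_t(E)=(\phi_t(G)-\phi_t(F))+(\phi_t(F)-\phi_t(E))$ and each summand is, by \Cref{L: preorder_possibilities}, either bounded above or tending to $-\infty$ (this is exactly the content of $G\ipreceq F$ and $F\ipreceq E$ being the negations of $F\iprec G$ and $E\iprec F$ — wait, one must be careful with the direction), the sum cannot diverge to $+\infty$; contradiction. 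The one subtlety is bookkeeping the direction of the inequalities in \Cref{D:preordering}(1), so I would first record the clean reformulation: $E\ipreceq F \iff \limsup_{t\to\infty}\bigl(\phi_t(F)-\phi_t(E)\bigr)<\infty$, which follows from \Cref{L: preorder_possibilities} since the only alternative to $\lim=+\infty$ is that the quantity is eventually bounded. Transitivity of $\ipreceq$ is then just additivity of $\limsup$ under the constraint that both terms are bounded above.

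Next, $\isim$ is an equivalence relation as the symmetric part of a reflexive transitive relation (a purely formal consequence). For $\preceq$: reflexivity is clear ($E\ipreceq E$ and $\lim\log m_t(E)/m_t(E)=0<\infty$). For transitivity, suppose $E\preceq F\preceq G$. From transitivity of $\ipreceq$ we get $E\ipreceq G$. If $E\not\isim G$ then $E\preceq G$ holds by definition, so assume $E\isim G$; then necessarily $E\isim F\isim G$ (using that $\isim$ is an equivalence relation together with $E\ipreceq F\ipreceq G\ipreceq E$). In this regime \Cref{L: preorder_possibilities}(2)/(3) together with alternative (a)/(b) applies to each of the pairs $(E,F)$, $(F,G)$, and $(E,G)$, and the hypotheses $E\preceq F$, $F\preceq G$ say $\lim_{t\to\infty}\log\frac{m_t(F)}{m_t(E)}<\infty$ and $\lim_{t\to\infty}\log\frac{m_t(G)}{m_t(F)}<\infty$ — more precisely, by \Cref{D:exit_sequence}\ref{I:semistable_difference} each of $\ell_t(F/E)$ and $\ell_t(G/F)$ either converges in $\bC$ or has $\log m_t(\cdot/\cdot)\to\pm\infty$, and $E\preceq F$ rules out $+\infty$. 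Adding, $\log\frac{m_t(G)}{m_t(E)}=\log\frac{m_t(G)}{m_t(F)}+\log\frac{m_t(F)}{m_t(E)}$ does not tend to $+\infty$, so the alternative (a)-with-$+\infty$ is excluded for the pair $(E,G)$, i.e. $E\preceq G$. Then $\sim$ is an equivalence relation for the same formal reason as $\isim$, and the final clause of \Cref{D:preordering}(5) — that $E\sim F$ iff $\lim_t\ell_t(E/F)$ exists in $\bC$ — is recorded as a sanity check, being exactly alternative (b).

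Finally, totality: the induced relation on $\lss/{\isim}$ is a partial order because $\ipreceq$ is a reflexive transitive relation whose symmetrization is $\isim$; it is total because for any $E,F\in\lss$, \Cref{C:LSScomparable} gives a trichotomy for $\lim/\liminf$ of $\phi_t(E)-\phi_t(F)$, and in particular one never has both $\lim_t\phi_t(E)-\phi_t(F)=+\infty$ and $\lim_t\phi_t(F)-\phi_t(E)=+\infty$, so at least one of $E\ipreceq F$, $F\ipreceq E$ holds — equivalently, via \Cref{C:ordering_A}, the classes $[\phi_t(E)],[\phi_t(F)]\in\mathfrak{A}$ are comparable and their ``integer gap'' is either finite (giving $E\isim F$) or infinite (giving one strict inequality). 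For $\lss/{\sim}$: given $E,F$, if $E\not\isim F$ then they are strictly comparable already; if $E\isim F$ then by \Cref{L: preorder_possibilities}(a)/(b), either $\lim_t\ell_t(E/F)$ exists (so $E\sim F$) or $\log m_t(E)/m_t(F)\to\pm\infty$ (so exactly one of $E\prec F$, $F\prec E$). In every case the classes are comparable, so $\preceq$ induces a total order on $\lss/{\sim}$.

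The routine parts are the limit-law manipulations; the only place demanding care is keeping the directions of the inequalities in \Cref{D:preordering}(1),(3) straight and confirming that ``negation of $\prec$'' genuinely yields a transitive relation — this is why I would first isolate the $\limsup<\infty$ reformulation of $\ipreceq$ and the analogous statement for $\preceq$, after which every step reduces to the elementary fact that a sum of two germs, each bounded above (resp. each not tending to $+\infty$), is bounded above (resp. does not tend to $+\infty$). I expect the direction-bookkeeping in the transitivity of $\preceq$ to be the main, though minor, obstacle.
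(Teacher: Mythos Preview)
Your approach is essentially identical to the paper's: reduce transitivity of $\ipreceq$ to additivity of phase differences via \Cref{L: preorder_possibilities}, handle $\preceq$ by first reducing to the case $E\isim F\isim G$ and then arguing with mass ratios, and read off totality from the trichotomy in \Cref{L: preorder_possibilities}. The paper's proof is terser but follows exactly this line.

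The one point where you hesitate---the direction bookkeeping---is indeed where you slip: your reformulation should read $E\ipreceq F \iff \limsup_{t\to\infty}\bigl(\phi_t(E)-\phi_t(F)\bigr)<\infty$, not $\phi_t(F)-\phi_t(E)$. From \Cref{D:preordering}(1), $E\ipreceq F$ is the negation of $F\iprec E$, i.e.\ the negation of $\lim_t\phi_t(E)-\phi_t(F)=+\infty$; by \Cref{L: preorder_possibilities} the only alternative is that $\phi_t(E)-\phi_t(F)$ is bounded above (either tends to $-\infty$ or is eventually in a bounded interval). The analogous correction applies to your mass-ratio formulation: $E\preceq F$ with $E\isim F$ gives $\lim_t\log\frac{m_t(E)}{m_t(F)}<\infty$, not the reciprocal. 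Since you apply the (wrong) reformulation consistently, the transitivity argument goes through unchanged once the signs are fixed; this is exactly the ``minor obstacle'' you anticipated.
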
 

\begin{proof}
We omit the proof of reflexivity. Suppose $E \ipreceq F$ and $F \ipreceq G$. Then, \Cref{L: preorder_possibilities}
implies that $\phi_t(E) - \phi_t(F)$ and $\phi_t(F) - \phi_t(G)$
either converge to $-\infty$ or are eventually each contained in an open interval of 
length $1$. In either case, $\phi_t(E) - \phi_t(G) = \phi_t(E) - \phi_t(F) + \phi_t(F) - \phi_t(G)$
does not tend to $\infty$, so $E \ipreceq G$. The claims about $\preceq$ are analogous.\endnote{Reflexivity is immediate. Suppose $E \preceq F$ and $F \preceq G$. Since $\preceq$ is defined by 
first checking the relation $\ipreceq$, we may assume that $E \isim F \isim G$.
Then, $\log(m_t(E)/m_t(F))$ and
$\log(m_t(F)/m_t(G))$ either converge to $-\infty$ or a finite value,
which again implies that $\log(m_t(E)/m_t(G))$ does not converge 
to $\infty$.}
$\ipreceq$ and $\preceq$ induce total orders by \Cref{L: preorder_possibilities}.
\end{proof}

\subsection{Filtrations of \texorpdfstring{$\cC$}{C}}

We use the preorders $\ipreceq$ and $\preceq$ coming from $\sigma_\bullet$ to filter $\cC$.  

\begin{defn}
\label{D:preorder_relations_subcategories}
Let $E,F\in \lss$ be given. 
\begin{enumerate}   
    \item In \Cref{N:slicingnotation}, $\lss(\ipreceq E)$ denotes the full subcategory of $\cC$ consisting of objects whose limit HN factors $A$ satisfy $A\ipreceq E$. $\lss(\iprec E)$ and $\lss(\isim E)$ are defined analogously. \vspace{2mm}
    \item Due to its prominent role, we use the notation $\cC^E := \lss(\isim E)$. \vspace{2mm}
    \item $\cC_{\preceq F}$ denotes the full subcategory of $\cC$ whose objects have limit HN factors $A$ with $A\preceq F$. $\cC_{\prec F}$ is defined analogously.\endnote{Although the definitions of $\lss(\ipreceq E)$ and $\cC_{\preceq E}$ are nearly identical, we use the slicing notation for the former, because the relation $\iprec$ only depends on the function $\phi_t(-)$, whereas the category $\cC_{\preceq E}$ depends on the masses as well.}\vspace{2mm}
    \item $\cC^E_{\preceq F}$ denotes the full subcategory of $\cC$ such that $\Ob(\cC^E_{\preceq F}) = \Ob(\cC^E)\cap \Ob(\cC_{\preceq F})$. $\cC^E_{\prec F}$ is defined analogously.
\end{enumerate}
\end{defn}

Given $F,G\in \lss$ such that $F\isim G$, by definition $\cC^F = \cC^G, \lss(\ipreceq F) = \lss(\ipreceq G)$, and $\lss(\iprec F) = \lss(\iprec G)$. So, the natural indexing set for these subcategories is $\lss/{\isim}$. Consequently, by a mild abuse of notation we write $F\in \lss/{\isim}$ for the class of $F$ in $\lss/{\isim}$. 

Similarly, if $E, E'\in \lss$ with $E\sim E'$ it is immediate that $\cC_{\preceq E} = \cC_{\preceq E'}$ and $\cC_{\prec E} = \cC_{\prec E'}$. So, the natural way to index these categories is by $\lss/{\sim}$ and as before, $E\in \lss/{\sim}$ refers to the class of $E$ in $\lss/{\sim}$.

\begin{lem} \label{L:no_homs_right_to_left}
Let $E,F \in \lss$ where $E \iprec F$. For any 
$U \in \cC^E$ and $V \in \cC^F$, $\Hom^k(V,U) = 0$ for all $k \in \bZ$.
\end{lem}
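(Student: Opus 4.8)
\emph{Overall strategy.} The plan is to reduce the statement, by dévissage along limit Harder--Narasimhan filtrations, to the vanishing $\Hom_\cC(\lss(f),\lss(g))=0$ for $f>g$ in $\mathfrak{A}$, which has already been established. To handle all degrees $k$ at once, I first note that $\cC^E$ is closed under the shift $[1]$, and hence under every $[k]$: if $A$ is limit semistable then so is $A[1]$, with $\phi_t(A[1])=\phi_t(A)+1$, so the relation $A\isim E$ --- which by \Cref{L: preorder_possibilities} only asks that $\phi_t(A)-\phi_t(E)$ be eventually bounded --- is unaffected by an integer shift; and since the limit HN filtration of $A[1]$ is the shift of that of $A$ (by uniqueness, \Cref{P:tstab}(1)), membership in $\cC^E=\lss(\isim E)$ is shift-invariant. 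Thus $U[k]\in\cC^E$ for all $k$, and since $\Hom^k(V,U)=\Hom_\cC(V,U[k])$ it suffices to treat $k=0$, i.e.\ to show $\Hom_\cC(V,U)=0$ for all $V\in\cC^F$ and $U\in\cC^E$ with $E\iprec F$; we may assume $V,U\neq 0$.

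\emph{Dévissage.} Fix limit HN filtrations $0=V_0\to\cdots\to V_n=V$ and $0=U_0\to\cdots\to U_m=U$, which exist by quasi-convergence (\Cref{D:exit_sequence}(i)), with subquotients $G_i=\Cone(V_{i-1}\to V_i)$ and $H_j=\Cone(U_{j-1}\to U_j)$. By definition of $\cC^F$ and $\cC^E$, each $G_i$ is limit semistable with $G_i\isim F$ and each $H_j$ is limit semistable with $H_j\isim E$. Feeding the triangles $V_{i-1}\to V_i\to G_i$ into the long exact sequence for $\Hom_\cC(-,U)$ and inducting on $i$ (starting from $V_0=0$) reduces the claim to $\Hom_\cC(G_i,U)=0$ for each $i$; feeding the triangles $U_{j-1}\to U_j\to H_j$ into the long exact sequence for $\Hom_\cC(G_i,-)$ and inducting on $j$ reduces this further to $\Hom_\cC(G_i,H_j)=0$ for all $i,j$.

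\emph{Phase comparison.} Since $G_i\isim F$, \Cref{L: preorder_possibilities} applied to the pair $(F,G_i)$ rules out its case (1), so $\phi_t(G_i)-\phi_t(F)$ is eventually bounded; likewise $\phi_t(H_j)-\phi_t(E)$ is eventually bounded. Because $E\iprec F$ means $\phi_t(F)-\phi_t(E)\to+\infty$, we conclude $\phi_t(G_i)-\phi_t(H_j)\to+\infty$, so in particular $\liminf_{t\to\infty}(\phi_t(G_i)-\phi_t(H_j))>0$; that is, $\phi_t(G_i)>\phi_t(H_j)$ in $\mathfrak{A}$. Since $G_i$ and $H_j$ are limit semistable we have $G_i\in\lss(f)$ and $H_j\in\lss(g)$ with $f>g$, whence $\Hom_\cC(G_i,H_j)=0$, completing the proof.

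The argument is essentially routine. The only points that need a little care are the shift-invariance of $\cC^E$ (which is exactly what allows one degree-zero vanishing statement to cover all $\Hom^k$) and reading off ``eventually bounded'' for $\phi_t(G_i)-\phi_t(F)$ from the trichotomy of \Cref{L: preorder_possibilities}; I do not anticipate a genuine obstacle.
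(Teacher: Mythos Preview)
Your proof is correct and follows essentially the same approach as the paper: reduce via d\'evissage along limit HN filtrations to the case of limit semistable objects, then use the phase comparison to get vanishing from the slicing. The only organizational difference is that the paper handles all $k$ simultaneously by choosing $t$ large enough (depending on $k$) so that $\phi_t^-(V)>\phi_t^+(U)+k$, whereas you first reduce to $k=0$ via shift-invariance of $\cC^E$ and then invoke the already-stated fact $\Hom_\cC(\lss(f),\lss(g))=0$ for $f>g$; both routes amount to the same thing.
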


\begin{proof}
Suppose $U,V \in \lss$ with $U \iprec V$. Let $k\in \bb{Z}$ be given and choose $t$ sufficiently large that $\phi_t^-(V)>\phi_t^+(U)+k$. By the properties of the slicing of $\sigma_t$, $\Hom(V,U[k])=0$. Next, for $U \in \cC^E$ and $V \in \cC^F$, any factors $A$ and $B$ of the limit HN filtrations of $U$ and $V$, 
respectively, satisfy $\Hom^k(B,A) = 0$ for all $k$, so
$\Hom^k(V,U) = 0$.
\end{proof}

\begin{prop}
\label{P:ImSOD}
For any $E\in \lss$, one has a semiorthogonal decomposition $\cC = \langle \cC^E : E \in \lss/{\isim} \rangle$, where $\lss/{\isim}$ is totally ordered by $\ipreceq$. In particular, $\cC^E$ is a thick pre-triangulated subcategory of $\cC$.
\end{prop}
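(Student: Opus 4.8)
The plan is to verify the two defining properties of an SOD directly from the structure established so far: (a) vanishing of Homs in the "wrong" direction, and (b) generation of $\cC$ by the $\cC^E$. The ordering on the index set $\lss/{\isim}$ is the total order $\ipreceq$ from \Cref{L:orderproperties}, and \Cref{L:no_homs_right_to_left} gives exactly the semiorthogonality condition: if $E \iprec F$, then $\Hom^k(V,U)=0$ for $V\in\cC^F$, $U\in\cC^E$ and all $k$, so in particular $\Hom(V,U)=0$. For generation, I would invoke the limit HN filtration (\Cref{D:exit_sequence}(i)): every $E\in\cC$ admits a filtration $0=E_0\to E_1\to\cdots\to E_n=E$ whose subquotients $G_i$ are limit semistable with strictly decreasing average phases in $\mathfrak{A}$. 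Grouping consecutive $G_i$'s lying in the same $\isim$-class, the uniqueness statement of \Cref{P:tstab}(1) and the concatenation property (\Cref{P:tstab}(2), or the endnote proposition) let me refine/coarsen so that each "block" of the filtration lies in a single $\cC^E$; this exhibits $E$ as an iterated extension of objects in the various $\cC^E$, so the $\cC^E$ collectively generate $\cC$ under extensions.

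Concretely, the key steps in order: First, note each $\cC^E = \lss(\isim E)$ is extension-closed (it is closed under the operation of concatenating limit HN filtrations with subquotients in a fixed $\isim$-class, using \Cref{P:tstab}(2)), hence additive. Second, one should check $\cC^E$ is triangulated: given a morphism $U\to U'$ in $\cC^E$, the limit HN factors of the cone lie in the $\isim$-classes appearing among those of $U$ and $U'$ — this again follows from \Cref{P:tstab} and the octahedral axiom, in the same spirit as the standard proof that $\lss(f)$ is triangulated for a t-stability; since only one $\isim$-class occurs, $\Cone(U\to U')\in\cC^E$. Third, assemble: by the totally ordered generation just described plus the Hom-vanishing from \Cref{L:no_homs_right_to_left}, the data $\langle \cC^E : E\in\lss/{\isim}\rangle$ is by definition an SOD of its triangulated closure, which is all of $\cC$ by Step (generation). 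Finally, thickness of each $\cC^E$: a direct summand of an object in $\cC^E$ has limit HN factors that are summands (up to the decomposition in \Cref{P:tstab}(3)) of those of the original object, hence still $\isim E$; so $\cC^E$ is a thick subcategory, and being a factor of an SOD it is pre-triangulated.

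The main obstacle is the bookkeeping in the generation step — specifically, checking that when consecutive limit HN factors $G_i, G_{i+1},\ldots, G_{i+k}$ all satisfy $G_i\isim\cdots\isim G_{i+k}$, the corresponding sub-object $E_{i+k}/E_{i-1}$ of the Postnikov system genuinely lies in $\cC^E$ rather than merely having the right associated graded. This is where \Cref{P:tstab}(2) is essential: it guarantees the limit HN filtration of that sub-object is precisely $(G_i,\ldots,G_{i+k})$, so no new $\isim$-classes appear. A secondary, purely formal point is that an SOD in the dg-setting requires the $\cC^E$ to be pre-triangulated dg-subcategories and that $\mathrm{Ho}(\cC)=\langle \mathrm{Ho}(\cC^E)\rangle$; but by the conventions in the introduction (quoting \cite{Drinfelddg}) this reduces to the triangulated statement, so I would simply remark that the verification takes place in $\mathrm{Ho}(\cC)$ and transports back. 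Everything else — reflexivity/transitivity of $\ipreceq$, totality of the order — has already been supplied by \Cref{L:orderproperties}, so the proof is essentially an assembly of \Cref{L:no_homs_right_to_left}, \Cref{P:tstab}, and \Cref{D:exit_sequence}(i).
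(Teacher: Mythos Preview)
Your proposal is correct in its overall architecture and matches the paper's proof: total order from \Cref{L:orderproperties}, semiorthogonality from \Cref{L:no_homs_right_to_left}, and generation by coarsening the limit HN filtration so that consecutive blocks lie in distinct $\isim$-classes. The one substantive divergence is in how you establish that each $\cC^E$ is thick and triangulated.

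You attempt this directly: a cone argument for triangulatedness (invoking \Cref{P:tstab} and the octahedral axiom) and \Cref{P:tstab}(3) for thickness. The thickness argument is fine, but the cone argument is not actually supplied by \Cref{P:tstab}, which says nothing about cones of arbitrary morphisms; and the analogy with ``$\lss(f)$ is triangulated for a t-stability'' is off, since $\lss(f)$ is only extension-closed, not triangulated. The paper sidesteps this entirely: once Hom-vanishing and generation are in hand, $\cC^E$ coincides with the full subcategory characterized by $\Hom^k(\cC^F,-)=0$ for all $F\isucc E$ and $\Hom^k(-,\cC^G)=0$ for all $G\iprec E$. An intersection of left and right orthogonals is automatically thick and triangulated, so nothing further needs checking. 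You do gesture at this at the very end (``being a factor of an SOD it is pre-triangulated''), which is the right idea; the paper simply leads with it and drops the hand-verification.
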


\begin{proof}
By \Cref{L:orderproperties}, $\lss/{\isim}$ is totally ordered by $\ipreceq$. If $E\iprec F$ then by \Cref{L:no_homs_right_to_left} one has $\Hom^k(\cC^F,\cC^E) = 0$ for all $k \in \bZ$. Furthermore, by coarsening the limit HN filtration, every $E\in \cC$ has a filtration $0 = E_{0}\to E_1\to \cdots \to E_m = E$ with $G_i := \Cone(E_{i-1}\to E_i)$ such that $G_m\iprec \cdots \iprec G_1$. Thus we have our semiorthogonal decomposition. This implies that each $\cC^E$ is thick and pre-triangulated, because $\cC^E$ is the full subcategory of $\cC$ characterized by $\Hom^k(\cC^F,\cC^E) = 0$ for all $F\isucc E$ and $\Hom^k(\cC^E,\cC^G)$ for all $G\iprec E$.
\end{proof}

\begin{cor}\label{C:Prec I pre-triangulated}
$\lss(\ipreceq E)$ and $\lss(\iprec E)$ are thick pre-triangulated sub\-categories of $\cC$.
\end{cor}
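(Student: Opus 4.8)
The plan is to deduce \Cref{C:Prec I pre-triangulated} from the SOD of \Cref{P:ImSOD} by re-expressing $\lss(\ipreceq E)$ and $\lss(\iprec E)$ as unions of the subcategories $\cC^F$ appearing in that SOD. First I would observe that, since limit HN filtrations are unique (\Cref{P:tstab}(1)) and their subquotients lie in well-defined $\lss(f)$, the condition ``all limit HN factors $A$ satisfy $A \ipreceq E$'' is equivalent to ``all limit HN factors lie in $\bigcup_{F \ipreceq E} \cC^F$.'' Because $\ipreceq$ induces a total order on $\lss/{\isim}$ (\Cref{L:orderproperties}), the set $\{F \in \lss/{\isim} : F \ipreceq E\}$ is a down-closed interval in that totally ordered set, and the same holds for $\iprec$ (which removes the top class $\isim E$ when $E$ is not maximal). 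So $\lss(\ipreceq E)$ is exactly the triangulated subcategory generated by the $\cC^F$ with $F \ipreceq E$, sitting inside the SOD as the span of an initial segment of its factors.

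Next I would invoke the general fact that, for an SOD $\cC = \langle \cC^{F} : F \in \lss/{\isim}\rangle$ with $\lss/{\isim}$ totally ordered, any down-closed subset $S$ of the index set gives rise to an admissible subcategory $\langle \cC^F : F \in S\rangle$ which is thick and pre-triangulated; this is standard SOD yoga (an initial segment of an SOD is itself the left factor of a two-step SOD $\cC = \langle \langle \cC^F : F \in S\rangle, \langle \cC^F : F \notin S\rangle\rangle$, hence admissible, hence thick). The key point to check is that $\lss(\ipreceq E)$ and $\lss(\iprec E)$ really coincide with these spans, i.e.\ that an object all of whose limit HN factors lie in the relevant $\cC^F$'s is built by iterated cones from those factors, and conversely. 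The ``conversely'' direction uses \Cref{P:tstab}(2): concatenating limit HN filtrations of objects in $\cC^{F_1}, \ldots, \cC^{F_k}$ (listed in decreasing order of phase) produces the limit HN filtration of the total object, whose factors are then all $\ipreceq E$. The forward direction is immediate from the definition of $\lss(\ipreceq E)$ together with the existence of limit HN filtrations.

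The main obstacle, such as it is, is bookkeeping rather than mathematics: one must be careful that ``$\iprec$ depends only on $\phi_t(-)$'' (as flagged in the endnote to \Cref{D:preorder_relations_subcategories}) so that $\lss(\ipreceq E)$ is genuinely a union of $\isim$-classes and hence of $\cC^F$'s — this is why we work with $\ipreceq$ and not $\preceq$ here, since the latter also sees masses and would not split so cleanly along the SOD. Once that is noted, thickness and pre-triangulatedness are inherited verbatim from \Cref{P:ImSOD} via the admissibility of initial segments of an SOD, so no further work is needed. Alternatively, and perhaps more economically, one can simply note that $\lss(\ipreceq E)$ is characterized by the vanishing $\Hom^k(\cC^F, -) = 0$ for all $F \isucc E$ together with $\Hom^k(-, \cC^G) = 0$ for all $G$ with $G \iprec F$ for every $F$ in the filtration — exactly the same mutual-orthogonality characterization used at the end of the proof of \Cref{P:ImSOD} to conclude thickness — and repeat that argument.
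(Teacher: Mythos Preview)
Your proposal is correct and follows essentially the same approach as the paper: identify $\lss(\ipreceq E)$ with the initial segment $\langle \cC^F : F \ipreceq E\rangle$ of the SOD from \Cref{P:ImSOD}, verify the nontrivial inclusion via concatenation of limit HN filtrations, and conclude thickness from the fact that initial segments of an SOD are admissible. The only quibble is that your alternative Hom-vanishing characterization at the end is slightly garbled (the second condition as written does not parse); the correct version is simply that $\lss(\ipreceq E)$ is the right orthogonal of $\langle \cC^F : F \isucc E\rangle$, which already suffices for thickness.
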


\begin{proof}
We consider $\lss(\ipreceq E)$ since the argument for 
$\lss(\iprec E)$ is nearly identical. By definition, 
$\lss(\ipreceq E) \subseteq \langle \cC^F: F\in \lss/{\isim}, F\ipreceq E\rangle$. We will show that this inclusion is an equality, because then \Cref{P:ImSOD} implies that $\lss(\ipreceq E)$ is a factor in a semiorthogonal decomposition of $\cC$, and hence a thick pre-triangulated subcategory.

Suppose  $G \in \langle\cC^F : F\in \lss/{\isim},F\ipreceq E\rangle$. Write the filtration from the semiorthogonal decomp\-osition as $0 = G_{k+1}\to G_k\to \cdots \to G_1 = G$ with $C_i = \Cone(G_{i+1}\to G_i) \in \cC^{F_i}$ for each $i$ where
$F_1 \iprec \cdots \iprec F_k \ipreceq E$. The HN filtration of $G$ is obtained by concatenating the filtrations of the $C_i$,\endnote{See \Cref{P:endnoteprop}.} so $G\in \lss(\ipreceq E)$ and hence
$\lss(\ipreceq E) =  \langle\cC^F : F\in \lss/{\isim},F\ipreceq E\rangle$.
\end{proof}

\begin{lem}\label{L:homology_in_serre_subcat}
Let $\cA \subset \cC$ be the heart of a bounded t-structure, 
$\{H^i\}_{i\in \bZ}$ the associated cohomology functors, and 
$\cB \subset \cA$ a Serre subcategory. The full subcategory 
$\cD \subset \cC$ consisting of objects $E$ such that $H^i(E) \in \cB$ for all $i$ is a thick 
pre-triangulated subcategory of $\cC$.
\end{lem}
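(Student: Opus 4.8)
The plan is to verify the three defining properties of a thick pre-triangulated subcategory for $\cD$: closure under shifts, closure under cones (which gives pre-triangulated-ness), and closure under direct summands (thickness). Throughout we exploit the long exact sequence in $\cA$-cohomology associated to a triangle in $\cC$.

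First I would handle closure under shifts: this is immediate since $H^i(E[1]) = H^{i+1}(E)$, so $H^i(E[1]) \in \cB$ for all $i$ whenever $H^j(E) \in \cB$ for all $j$. Next, for closure under cones, suppose $E \to F \to G \to E[1]$ is a triangle in $\cC$ with $E, F \in \cD$. Applying the cohomology functors yields a long exact sequence
\[
\cdots \to H^i(E) \to H^i(F) \to H^i(G) \to H^{i+1}(E) \to \cdots
\]
in $\cA$. Thus $H^i(G)$ sits in an exact sequence with $\coker(H^i(E)\to H^i(F))$ on one side and $\ker(H^{i+1}(E) \to H^{i+1}(F))$ on the other; more precisely there is a short exact sequence $0 \to \coker(H^i(E) \to H^i(F)) \to H^i(G) \to \ker(H^{i+1}(E)\to H^{i+1}(F)) \to 0$. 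Since $\cB$ is a Serre subcategory of $\cA$, it is closed under subobjects, quotients, and extensions; as $H^i(E), H^i(F), H^{i+1}(E) \in \cB$, the cokernel and kernel above lie in $\cB$, hence so does $H^i(G)$ by the extension-closure of $\cB$. Therefore $G \in \cD$. (The same argument, run on the rotated triangle, shows $\cD$ is closed under fibers and cofibers generally, confirming it is a pre-triangulated subcategory; strict fullness is part of the hypothesis on how $\cD$ is defined.)

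Finally, for thickness: if $X \oplus Y \in \cD$, then $H^i(X\oplus Y) = H^i(X)\oplus H^i(Y) \in \cB$ for all $i$. Since $\cB$ is a Serre subcategory, it is closed under direct summands (a summand is a subobject), so $H^i(X), H^i(Y) \in \cB$ for all $i$, giving $X, Y \in \cD$.

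I do not expect a genuine obstacle here — the argument is a routine application of the long exact cohomology sequence together with the Serre property of $\cB$. The only mild subtlety is bookkeeping the short exact sequence decomposition of $H^i(G)$ from the long exact sequence and invoking closure of $\cB$ under subobjects, quotients, and extensions (all of which hold for a Serre subcategory); one should also note that boundedness of the t-structure guarantees the cohomology functors and the relevant long exact sequences behave as expected, though for this particular statement even that is not strictly needed.
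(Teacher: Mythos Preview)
Your proof is correct and follows essentially the same approach as the paper's: long exact sequence in $\cA$-cohomology plus the Serre property for closure under triangles, and additivity of $H^i$ for thickness. The only cosmetic difference is that the paper checks closure under extensions (assuming the outer terms of a triangle lie in $\cD$) whereas you check closure under cones, but these are equivalent via rotation, as you note.
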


\begin{proof}
Omitted.\endnote{Since each $H^i$ is additive, the cohomology of any direct summand of $E \in \cD$ is a summand of the cohomology of $E$. Since $\cB$ is closed under taking summands, $\cD$ is thick. $\cD$ is closed under shifts. Let $X \to Y \to Z$ be an exact triangle in $\cC$ with $X,Z \in \cD$. The long exact sequence of cohomology gives exact sequences $H^i(X) \to H^i(Y) \to H^i(Z)$, so that $H^i(Y)$ is an extension of a subobject of $H^i(Z)$ by a quotient of $H^i(X)$. Since $H^i(X),H^i(Z) \in \cB$ and $\cB$ is a Serre subcategory, $H^i(Y) \in \cB$ and thus $Y\in \cD$.}
\end{proof}

\begin{lem} \label{L:eventual t-structure}
Fix $E \in \lss$. For $A\in \lss$ with $A\isim E$, define $\phi^E(A)=\limsup_{t\to\infty}\phi_t(A)-\phi_t(E).$ Then
\begin{enumerate} 
\item $\{\lss^E(\phi)\}_{\phi \in \bR}$ with $\cP^E(\phi):=\{A\in \lss :\phi^E(A)=\phi\}$ defines a slicing on $\cC^E$; and \vspace{2mm}
\item for any $a\in \bb{R}$, $\cA^E_a:= \cP^E(a-1,a]$ is the heart of a bounded $t$-structure on $\cC^E$.
\end{enumerate} 
Also, the $\cP^E$-filtration of $F\in \cC^E$ agrees with its limit HN filtration.
\end{lem}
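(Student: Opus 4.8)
The plan is to verify that $\cP^E=\{\cP^E(\phi)\}_{\phi\in\bR}$ is a slicing on $\cC^E$; part (2) will then be the standard fact that for any slicing the full subcategory of objects all of whose HN factors have phase in a fixed half-open length-one interval is the heart of a bounded $t$-structure (Bridgeland \cite{Br07}), applied with the interval $(a-1,a]$. The whole argument rests on \Cref{C:LSScomparable} together with the elementary inequality
\[
\limsup_{t\to\infty} f(t)\ \ge\ \limsup_{t\to\infty} g(t)\ +\ \liminf_{t\to\infty}\bigl(f(t)-g(t)\bigr).
\]
First, $\phi^E(A)$ is a well-defined real number for $A\isim E$: by \Cref{L: preorder_possibilities} the function $\phi_t(A)-\phi_t(E)$ is eventually bounded, so the $\limsup$ is finite, and it depends only on the isomorphism class of $A$ since $\phi_t(A)$ does up to $\approx$. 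The key observation is that \emph{for $A,B\in\lss$ with $A,B\isim E$, one has $\phi^E(A)=\phi^E(B)$ iff $\phi_t(A)\approx\phi_t(B)$, and $\phi^E(A)>\phi^E(B)$ iff $\liminf_{t\to\infty}(\phi_t(A)-\phi_t(B))>0$}. Indeed, \Cref{C:LSScomparable} supplies exactly three mutually exclusive alternatives for $A,B$: either $\phi_t(A)\approx\phi_t(B)$, or $\liminf_t(\phi_t(A)-\phi_t(B))>0$, or $\liminf_t(\phi_t(B)-\phi_t(A))>0$; the first clearly forces $\phi^E(A)=\phi^E(B)$, while applying the displayed inequality with $f(t)=\phi_t(A)-\phi_t(E)$, $g(t)=\phi_t(B)-\phi_t(E)$ (and with $A,B$ swapped) shows the second forces $\phi^E(A)>\phi^E(B)$ and the third $\phi^E(A)<\phi^E(B)$. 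Since $\phi^E(A),\phi^E(B)\in\bR$, comparing the two trichotomies proves the observation.

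Granting this, the slicing axioms follow quickly. Axiom (1), $\cP^E(\phi)[1]=\cP^E(\phi+1)$, holds because $\isim$ is insensitive to integer shifts, so $A\isim E$ gives $A[1]\isim E$, and $\phi_t(A[1])=\phi_t(A)+1$ gives $\phi^E(A[1])=\phi^E(A)+1$ (both inclusions then follow). Each $\cP^E(\phi)$ is a full additive subcategory of $\cC^E$: if $A,B\in\cP^E(\phi)$ then $\phi_t(A)\approx\phi_t(B)$ by the observation, so by \Cref{P:tstab}(3) the object $A\oplus B$ is again limit semistable with $\phi_t(A\oplus B)\approx\phi_t(A)$, whence $\phi^E(A\oplus B)=\phi$; closure under direct summands holds because limit semistability and the germ of $\phi_t$ are inherited by summands. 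Axiom (2): if $\phi_1>\phi_2$ and $A_i\in\cP^E(\phi_i)$ then $\phi^E(A_1)>\phi^E(A_2)$, so $\liminf_t(\phi_t(A_1)-\phi_t(A_2))>0$ by the observation; together with $\phi_t^+(A_i)-\phi_t^-(A_i)\to0$ this forces $\phi_t^-(A_1)>\phi_t^+(A_2)$ for $t\gg0$, and the slicing property of $\sigma_t$ yields $\Hom_{\cC}(A_1,A_2)=0$.

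For axiom (3) and the final assertion, let $F\in\cC^E$ and take its limit HN filtration $0=F_0\to\cdots\to F_n=F$ with factors $G_i$. By definition of $\cC^E=\lss(\isim E)$ each $G_i$ is limit semistable with $G_i\isim E$, so $G_i\in\cP^E(\phi^E(G_i))$; and since consecutive factors of a limit HN filtration satisfy $\liminf_t(\phi_t(G_i)-\phi_t(G_{i+1}))>0$, the observation gives $\phi^E(G_1)>\cdots>\phi^E(G_n)$. Hence the limit HN filtration is already a $\cP^E$-HN filtration, establishing axiom (3); by uniqueness of HN filtrations for a slicing (equivalently \Cref{P:tstab}(1)) it is \emph{the} $\cP^E$-HN filtration of $F$, which is the last sentence of the lemma. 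Part (2) is then immediate from the cited fact about slicings, with $\cA^E_a=\cP^E((a-1,a])$. The main thing to get right is that every comparison must be recorded at the level of $\liminf$ rather than merely $\limsup$: this is exactly what powers the $\Hom$-vanishing in axiom (2) and the strict phase inequalities in axiom (3), and it is precisely the content of the $\liminf$-alternatives in \Cref{C:LSScomparable}; beyond this the proof is bookkeeping.
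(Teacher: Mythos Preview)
Your proof is correct and follows essentially the same approach as the paper's. The main difference is organizational: you isolate a clean ``key observation'' that the trichotomy of \Cref{C:LSScomparable} for $A,B\isim E$ corresponds exactly to the trichotomy $\phi^E(A)\lessgtr\phi^E(B)$, proved via the elementary inequality $\limsup f\ge\limsup g+\liminf(f-g)$, and then read off all the slicing axioms from it; the paper instead establishes the needed implications inline in each axiom (e.g.\ arguing by contradiction that $\phi^E(F)=\phi^E(G)$ forces $\phi_t(F)\approx\phi_t(G)$, and proving $\Hom$-vanishing by choosing a single $t_1$ directly from the $\limsup$). Your packaging is arguably tidier, and it also gives $\phi_t^-(A_1)>\phi_t^+(A_2)$ for all $t\gg0$ rather than at one chosen $t_1$, but the mathematical content is the same. (A minor remark: closure of $\cP^E(\phi)$ under direct summands is not part of the slicing axioms and is not needed here, though what you say about it is correct.)
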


\begin{proof}
The claim (2) is a consequence of (1), so we prove (1). By \Cref{L: preorder_possibilities}, $\phi^E(A)$ exists in $\bR$ for exactly those $A\in \lss$ such that $A\isim E$. Suppose $F,G\in \cP^E(\phi)$ and assume $\phi_t(F)\gtrsim \phi_t(G)$ without loss of generality, by \Cref{C:ordering_A}. By hypothesis, $\phi^E(F) = \phi = \phi^E(G)$. Suppose $\liminf_{t\to\infty} \phi_t(F) - \phi_t(G) = I >0$. Then for $t \gg 0$, we have both $\phi_t(F)-\phi_t(G) \geq I/2$ and $\phi_t(F)-\phi_t(E) \leq \phi + I/4$. This implies $\phi_t(G)-\phi_t(E) \leq \phi - I/4$, which contradicts $\phi^E(G)=\phi$. So, $\liminf_{t\to\infty} \phi_t(F) - \phi_t(G) = 0$ and by \Cref{C:LSScomparable} this implies that $\phi_t(F) \approx \phi_t(G)$, as needed. Therefore, $F\oplus G \in \cP^E(\phi)$ and so $\cP^E(\phi)$ is additive for each $\phi \in \bR$. One verifies that $\cP^E(\phi+1) = \cP^E(\phi)[1]$.

Let $\phi_1>\phi_2$ and let $A_i\in \cP^E(\phi_i)$ be given for $i=1,2$. Set $\epsilon=\frac{\phi_1-\phi_2}{2}$. $\exists t_0$ such that $t\ge t_0$ $\Rightarrow$ $\phi_t^+(A_2) - \phi_t(E) < \epsilon + \phi_2$. Also, $\exists t_1\ge t_0$ such that  $\phi_{t_1}^-(A_1) - \phi_{t_1}^+(E) > \phi_1 - \epsilon$. It follows that $\phi_{t_1}^-(A_1) - \phi_{t_1}^+(A_2) = \phi_1-\phi_2 - 2\epsilon >0$ and so $\Hom(A_1,A_2) = 0$. 

Each $F\in \cC^E$ has a limit HN filtration with subquotients $G_i$ satisfying $\phi_t(G_1)>\cdots>\phi_t(G_n)$. This implies $\phi^E(G_1)\ge \cdots \ge \phi^E(G_n)$. If $\phi^E(G_i)=\phi^E(G_{i+1})$, then by the argument in the first paragraph, $\phi_t(G_i) \approx \phi_t(G_{i+1})$, a contradiction.
\end{proof}

\begin{lem}
\label{L:massproportionality}
Let $a\in \bZ$ and $X\in \cA_a^E$ be given. There exist $C\in (0,1]$ and $t_0 \in \bR$ such that $t\ge t_0$ implies $\lvert Z_t(X)\rvert > C\cdot m_t(X).$
\end{lem}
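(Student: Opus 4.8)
The plan is to show that for $X\in\cA_a^E$ the spread $\phi_t^+(X)-\phi_t^-(X)$ of the phases of the $\sigma_t$-HN factors of $X$ is eventually bounded above by a constant strictly less than $1$, and then to feed this into \Cref{L:logZcomp}, which controls $\log\!\bigl(m_t(X)/\lvert Z_t(X)\rvert\bigr)$ in terms of that spread.

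First I would unpack the hypothesis. By \Cref{L:eventual t-structure} the $\cP^E$-filtration of $X$ is its limit HN filtration; write its subquotients as $G_1,\dots,G_n$ with $G_i\in\cP^E(\phi_i)$, $\phi_i\in(a-1,a]$, and ordered so that $\phi_t(G_1)>\cdots>\phi_t(G_n)$. Since $\liminf_t\bigl(\phi_t(G_i)-\phi_t(G_{i+1})\bigr)>0$ and $\phi_t^+(G_i)-\phi_t^-(G_i)\to0$, for $t\gg0$ one has $\phi_t^-(G_i)>\phi_t^+(G_{i+1})$, so by uniqueness of $\sigma_t$-HN filtrations the $\sigma_t$-HN filtration of $X$ refines its limit HN filtration; in particular all $\sigma_t$-HN factors of $X$ have phase in $[\phi_t^-(G_n),\phi_t^+(G_1)]=[\phi_t^-(X),\phi_t^+(X)]$. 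The crux is that this interval is shorter than $1$ in the limit. Indeed $\limsup_t\bigl(\phi_t(G_1)-\phi_t(E)\bigr)=\phi^E(G_1)=\phi_1\le a$, while feeding the pair $(E,G_n)$ into the trichotomy of \Cref{L: preorder_possibilities} --- case (1) is excluded since $G_n\isim E$ --- together with $\phi^E(G_n)=\phi_n\in(a-1,a]$ forces $\liminf_t\bigl(\phi_t(G_n)-\phi_t(E)\bigr)>a-1$ in each remaining case (in case (2) the limit is the unique integer in $(a-1,a]$, namely $a$; in case (3) the associated integer is forced to equal $a$ because its interval must meet $(a-1,a]$). Hence
\[
\limsup_{t\to\infty}\bigl(\phi_t(G_1)-\phi_t(G_n)\bigr)\ \le\ \limsup_{t\to\infty}\bigl(\phi_t(G_1)-\phi_t(E)\bigr)-\liminf_{t\to\infty}\bigl(\phi_t(G_n)-\phi_t(E)\bigr)\ <\ a-(a-1)=1,
\]
and adding in $\phi_t^+(G_i)-\phi_t^-(G_i)\to0$ gives $\limsup_t\bigl(\phi_t^+(X)-\phi_t^-(X)\bigr)<1$, so there are $\eta\in(0,1)$ and $t_0$ with $\phi_t^+(X)-\phi_t^-(X)\le1-\eta$ for all $t\ge t_0$.

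Finally I would conclude by applying \Cref{L:logZcomp} with $\sigma=\sigma_t$, $E=X$, and $\epsilon=1-\eta$: the estimate $\bigl|\log\!\bigl(m_t(X)/\lvert Z_t(X)\rvert\bigr)\bigr|\le\bigl|\log\cos(\pi\epsilon/2)\bigr|$ from its proof, combined with $m_t(X)\ge\lvert Z_t(X)\rvert$, yields $\lvert Z_t(X)\rvert\ge\cos\!\bigl(\tfrac{\pi(1-\eta)}{2}\bigr)m_t(X)=\sin\!\bigl(\tfrac{\pi\eta}{2}\bigr)m_t(X)$ for all $t\ge t_0$. Since $\sin(\pi\eta/2)\in(0,1)$ and $m_t(X)>0$ for $t\gg0$ when $X\ne0$ (the statement is vacuous for $X=0$), setting $C:=\tfrac12\sin(\pi\eta/2)\in(0,1]$ gives $\lvert Z_t(X)\rvert>C\,m_t(X)$ for all sufficiently large $t$, after possibly enlarging $t_0$. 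The one genuinely delicate point is the middle step: getting the phase spread \emph{strictly} below $1$ rather than merely $\le1$, which is precisely where \Cref{L: preorder_possibilities} enters, preventing the phases of the extreme limit HN factors $G_1$ and $G_n$ from drifting apart toward a full unit; the remainder is the routine half-plane estimate already packaged in \Cref{L:logZcomp}.
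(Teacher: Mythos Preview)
Your proof is correct and follows essentially the same approach as the paper: both arguments show that the phase spread $\phi_t^+(X)-\phi_t^-(X)$ is eventually bounded strictly below $1$ by invoking \Cref{L: preorder_possibilities} on the extremal limit HN factors, and then use the resulting sector estimate to compare $\lvert Z_t(X)\rvert$ with $m_t(X)$. The only cosmetic differences are that you spell out the case analysis in \Cref{L: preorder_possibilities} more explicitly, and you package the final half-plane estimate by citing \Cref{L:logZcomp} whereas the paper redoes that computation directly with $C=\cos\!\bigl(\tfrac{\pi}{2}(1-\tfrac{\epsilon}{2})\bigr)$.
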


\begin{proof}
    Because $X\in \cA_a^E$, all of its limit HN factors $\{H_i\}$ are in $\cA_a^E$. For $t \gg 0$, the $\sigma_t$-HN filtration refines the limit HN filtration. It follows that if $H_{\min}$ and $H_{\max}$ are the limit HN factors of minimal and maximal asymptotic phase respectively, then $\phi^+_t(X)=\phi^+_t(H_{\max})$ and $\phi_t^-(X)=\phi_t^-(H_{\min})$ for all $t \gg 0$.

    Now choose a small $\epsilon>0$. Applying \Cref{L: preorder_possibilities} to $H_{\min}$ and $H_{\max}$ implies that for $t \gg 0$, $\phi^+_t(X)=\phi^+_t(H_{\max})< \phi_t(E) + a + \epsilon/2$ and $\phi^-_t(X)=\phi^-_t(H_{\min}) > \phi_t(E)+a-1+\epsilon$. It follows that if $\{F_j\}$ are the $\sigma_t$-HN factors of $X$, then $\{Z_t(F_j)\}$ is contained in an open sector of angular width $\pi-\epsilon/2$ in $\bC$. This implies that $|Z_t(X)| > C \cdot \sum_{j} |Z_t(F_j)| = C \cdot m_t(X)$ for $C = \cos(\frac{\pi}{2}(1-\frac{\epsilon}{2}))$ and all $t \gg 0$.
\end{proof}

\begin{lem}
\label{L:eventual_heart_serre_subcategory}
For all $a \in \bZ$, $\cA^E_a \cap \cC_{\preceq E}$
and $\cA^E_a \cap \cC_{\prec E}$ are Serre subcategories of $\cA^E_a$.
\end{lem}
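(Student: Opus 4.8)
The plan is to reduce the statement to an asymptotic comparison of masses, and then to exploit that, on the heart $\cA^E_a$, the central charges $Z_t$ eventually lie in a convex sector of angular width strictly less than $\pi$ whose width is \emph{independent of $t$}. First, since $\cA^E_a$ is abelian (\Cref{L:eventual t-structure}) and contains $0$ — which lies in both $\cC_{\preceq E}$ and $\cC_{\prec E}$ vacuously, as $0$ has no limit HN factors — it is standard that to prove $\cA^E_a\cap\cC_{\preceq E}$ and $\cA^E_a\cap\cC_{\prec E}$ are Serre subcategories of $\cA^E_a$ it suffices to show: for every short exact sequence $0\to U\to V\to W\to 0$ in $\cA^E_a$, one has $V\in\cC_{\preceq E}$ if and only if $U,W\in\cC_{\preceq E}$, and likewise with $\preceq$ replaced by $\prec$.

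The second step is a mass reformulation of membership. I would prove that for nonzero $X\in\cC^E$ one has $X\in\cC_{\preceq E}$ if and only if $\limsup_{t\to\infty}m_t(X)/m_t(E)<\infty$, and $X\in\cC_{\prec E}$ if and only if $\lim_{t\to\infty}m_t(X)/m_t(E)=0$. Indeed, if $A_1,\dots,A_n$ are the limit HN factors of $X$ (all $\isim E$, since $X\in\cC^E$), then by \Cref{P:endnoteprop} the $\sigma_t$-HN filtration of $X$ refines its limit HN filtration for $t\gg 0$, so $m_t(X)=\sum_i m_t(A_i)$; moreover by \Cref{L: preorder_possibilities} the limit $\lim_{t\to\infty}\log\bigl(m_t(A_i)/m_t(E)\bigr)=\lim_{t\to\infty}\Re\,\ell_t(A_i/E)$ exists in $[-\infty,+\infty]$, so unwinding \Cref{D:preordering} and \Cref{D:preorder_relations_subcategories}, $A_i\preceq E$ amounts to this limit being $\ne +\infty$ (i.e.\ $\limsup_t m_t(A_i)/m_t(E)<\infty$) and $A_i\prec E$ amounts to it being $-\infty$ (i.e.\ $m_t(A_i)/m_t(E)\to 0$). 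Since the sum over the finitely many $i$ transfers $\limsup<\infty$ and $\to 0$ between $X$ and its factors, the reformulation follows.

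The core of the argument is a mass comparison along the short exact sequence. Fix $0\to U\to V\to W\to 0$ in $\cA^E_a$. The limit HN factors occurring in $U$, $V$, $W$ form a finite set of limit semistable objects $B$, each with $\cP^E$-phase $\phi^E(B)\in(a-1,a]$ (by \Cref{L:eventual t-structure}); applying \Cref{L: preorder_possibilities} to each pair $(B,E)$ and using $B\isim E$ shows $\liminf_{t\to\infty}\bigl(\phi_t(B)-\phi_t(E)\bigr)>a-1$ \emph{strictly} and $\limsup_{t\to\infty}\bigl(\phi_t(B)-\phi_t(E)\bigr)\le a$. As there are finitely many such $B$, there is $\rho>0$ and — combining this with limit semistability ($\phi_t^+-\phi_t^-\to 0$) and with \Cref{P:endnoteprop} — a threshold $t_0$ so that for all $t\ge t_0$, every $\sigma_t$-HN factor of $U$, $V$, or $W$ has $\sigma_t$-phase inside a common interval $I_t$ of length $1-\rho<1$ (shifting with $\phi_t(E)$ but of fixed length). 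Hence $Z_t(U),Z_t(V),Z_t(W)$ all lie in a common closed convex sector $\Sigma_t$ of half-angle $\alpha:=\tfrac{\pi}{2}(1-\rho)<\tfrac{\pi}{2}$. Since $[V]=[U]+[W]$ in $K_0(\cC)$ and $Z_t$ is additive, $Z_t(V)=Z_t(U)+Z_t(W)$; pairing with the central unit direction of $\Sigma_t$ gives $\bigl(|Z_t(U)|+|Z_t(W)|\bigr)\cos\alpha\le |Z_t(V)|$, while the triangle inequality gives $|Z_t(V)|\le |Z_t(U)|+|Z_t(W)|$. Feeding in \Cref{L:massproportionality} (constants $C_U,C_V,C_W\in(0,1]$ with $m_t(\,\cdot\,)<|Z_t(\,\cdot\,)|/C_{(\cdot)}$ for $t\gg 0$) together with $|Z_t(\,\cdot\,)|\le m_t(\,\cdot\,)$, I obtain, for $t\gg 0$,
\[
m_t(U),\ m_t(W)\ \le\ \frac{m_t(V)}{\min(C_U,C_W)\cos\alpha}\,,\qquad m_t(V)\ \le\ \frac{m_t(U)+m_t(W)}{C_V}\,.
\]
Dividing through by $m_t(E)$ and invoking the reformulation of the previous paragraph yields both directions of the equivalence, simultaneously for $\cC_{\preceq E}$ and for $\cC_{\prec E}$; this finishes the proof.

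The main obstacle is the uniform-in-$t$ control of the sector width: a priori the $\cP^E$-phases of an object of $\cA^E_a$ are only known to lie in the \emph{half-open} interval $(a-1,a]$, so the work is to upgrade this, using finiteness of the set of limit HN factors appearing in $U,V,W$ and the trichotomy of \Cref{L: preorder_possibilities}, to a closed interval of length bounded away from $1$. I would emphasize the two tempting-but-wrong shortcuts to avoid: one should not try to bound $m_t(U)$ or $m_t(W)$ by $m_t(V)$ via the HN phases of sub- or quotient objects (these are not controlled inside a heart — a quotient of a semistable object can have strictly larger top phase), nor via composition multiplicities ($\cA^E_a$ need not be of finite length); routing everything through $Z_t$, the relation $[V]=[U]+[W]$, and the single analytic input \Cref{L:massproportionality} sidesteps both.
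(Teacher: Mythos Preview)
Your proof is correct and follows essentially the same route as the paper: reformulate membership in $\cC_{\preceq E}$ (resp.\ $\cC_{\prec E}$) via the asymptotics of $m_t(X)/m_t(E)$, then use additivity of $Z_t$ together with the comparison $|Z_t|\asymp m_t$ from \Cref{L:massproportionality} to transfer these asymptotics across short exact sequences in $\cA^E_a$. The only cosmetic difference is packaging: the paper applies \Cref{L:massproportionality} once to the single object $F'\oplus F''$ (giving $m_t(F)\ge|Z_t(F'\oplus F'')|>C\,(m_t(F')+m_t(F''))$) rather than re-deriving the uniform sector bound for $U,V,W$ separately as you do.
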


\begin{proof}
    First, we prove that $\cA_a^E\cap \cC_{\preceq E}$ and $\cA_a^E\cap \cC_{\prec E}$ are closed under subobjects and quotient objects. Note that for $F\in \cC^E$, because all limit HN factors of $F$ are $\isim E$, \Cref{L: preorder_possibilities} implies that $F\in \cC^E_{\preceq E}$ (resp. $\cC^E_{\prec E}$) is equivalent to $\lim_{t\to\infty} m_t(F)/m_t(E) \in [0,\infty)$ (resp. = 0). Suppose given an exact sequence $0\to F'\to F\to F''\to 0$ in $\cA_a^E$. Additivity of $Z_t$ gives $Z_t(F) = Z_t(F') + Z_t(F'') = Z_t(F'\oplus F'')$.  By \Cref{L:massproportionality}, there exist $C>0$ and $t_0 \in \bR$ such that $t\ge t_0$ implies 
    \[
        m_t(F) \ge \lvert Z_t(F)\rvert >  C\cdot m_t(F'\oplus F'') = C(m_t(F') + m_t(F'')).
    \]
    So, if $F\in \cA_a^E\cap \cC_{\preceq E}$ (resp. $\cA_a^E\cap \cC_{\prec E}$) then so are $F'$ and $F''$.

    Consider an exact sequence $F\to H \to G$ in $\cA_a^E$ where $F$ and $G$ are in $\cA_a^E \cap \cC_{\preceq E}$. By the first paragraph, $H$ fits into a short exact sequence $0\to F'\to H \to G'\to 0$ with $F',G'\in \cA_a^E\cap \cC_{\preceq E}$. Because $\cA_a^E$ is the heart of a bounded t-structure, $H\in \cA_a^E$. Applying \Cref{L:massproportionality} again, one has $C\cdot m_t(H)/m_t(E) \le (m_t(F') + m_t(G'))/m_t(E)$ for all $t$ sufficiently large and $C>0$. Hence, $\lim_{t\to\infty} m_t(H)/m_t(E) \in [0,\infty)$ and thus $H\in \cA_a^E\cap\cC_{\preceq E}$. The case of $\cA_a^E\cap \cC_{\prec E}$ is analogous.
\end{proof}

\begin{lem}
\label{L:precimthick}
$\cC^E_{\preceq E}$ and 
$\cC^E_{\prec E}$ are thick pre-triangulated subcategories 
of $\cC^E$.
\end{lem}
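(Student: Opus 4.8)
The plan is to exhibit $\cC^E_{\preceq E}$ and $\cC^E_{\prec E}$ as the subcategories of $\cC^E$ cut out by a cohomological condition, and then to invoke \Cref{L:homology_in_serre_subcat}. Fix any $a\in\bZ$. By \Cref{L:eventual t-structure}, $\cA^E_a=\cP^E(a-1,a]$ is the heart of a bounded $t$-structure on the pre-triangulated category $\cC^E$ (pre-triangulated by \Cref{P:ImSOD}), and the $\cP^E$-filtration of any $F\in\cC^E$ coincides with its limit HN filtration. Let $\{H^i\}_{i\in\bZ}$ denote the cohomology functors of this $t$-structure. By \Cref{L:eventual_heart_serre_subcategory}, $\cA^E_a\cap\cC_{\preceq E}$ and $\cA^E_a\cap\cC_{\prec E}$ are Serre subcategories of $\cA^E_a$.

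The key point is the identity
\[
\cC^E_{\preceq E}=\{\,F\in\cC^E : H^i(F)\in\cA^E_a\cap\cC_{\preceq E}\ \text{for all}\ i\in\bZ\,\},
\]
together with the analogous identity with $\prec$ in place of $\preceq$. To prove it, I would first note that the relations $\preceq E$ and $\prec E$ on limit semistable objects $\isim E$ are shift-invariant: if $A\isim E$ then $A[i]\isim E$ and $m_t(A[i])=m_t(A)$, so $\lim_{t\to\infty}m_t(A[i])/m_t(E)=\lim_{t\to\infty}m_t(A)/m_t(E)$, and hence $A[i]\preceq E\iff A\preceq E$ and $A[i]\prec E\iff A\prec E$ by \Cref{D:preordering}. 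Next, for $F\in\cC^E$ with limit HN factors $G_1,\dots,G_n$, the slicing-truncation description of the $t$-structure combined with the coincidence of $\cP^E$-filtrations and limit HN filtrations (\Cref{L:eventual t-structure}, \Cref{P:tstab}(2)) shows that the limit HN factors of $H^i(F)$ are exactly $\{\,G_j[i] : \phi^E(G_j)\in(a-1-i,a-i]\,\}$, and that as $i$ ranges over $\bZ$ these sets partition $\{G_1,\dots,G_n\}$ up to shift. Therefore $F\in\cC_{\preceq E}$ (i.e.\ all $G_j\preceq E$) holds if and only if every limit HN factor of every $H^i(F)$ is $\preceq E$, i.e.\ $H^i(F)\in\cC_{\preceq E}$ for all $i$; since always $H^i(F)\in\cA^E_a$, this yields the displayed identity. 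The argument for $\prec$ is identical.

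Finally, applying \Cref{L:homology_in_serre_subcat} to the pre-triangulated category $\cC^E$, its heart $\cA^E_a$, and the Serre subcategory $\cA^E_a\cap\cC_{\preceq E}$ (respectively $\cA^E_a\cap\cC_{\prec E}$) shows that $\cC^E_{\preceq E}$ (respectively $\cC^E_{\prec E}$) is a thick pre-triangulated subcategory of $\cC^E$, as desired. The one step demanding care is the description of the limit HN factors of the cohomology objects $H^i(F)$; this rests on the last assertion of \Cref{L:eventual t-structure} (that $\cP^E$-filtrations are limit HN filtrations) and on \Cref{P:tstab}(2), together with the shift-invariance of $\preceq E$ and $\prec E$ noted above. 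Everything else is a formal consequence of the Serre-subcategory machinery already set up.
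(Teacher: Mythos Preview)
Your proposal is correct and follows essentially the same approach as the paper: identify $\cC^E_{\preceq E}$ (resp.\ $\cC^E_{\prec E}$) as the subcategory of $\cC^E$ whose cohomology objects with respect to $\cA^E_a$ lie in the Serre subcategory $\cA^E_a\cap\cC_{\preceq E}$ (resp.\ $\cA^E_a\cap\cC_{\prec E}$), and then apply \Cref{L:homology_in_serre_subcat} together with \Cref{L:eventual_heart_serre_subcategory}. Your write-up is in fact more detailed than the paper's, spelling out the shift-invariance of $\preceq E$ and the description of the limit HN factors of $H^i(F)$; the paper compresses this to the single observation that the limit HN factors of $H^i(F)$ are shifts of limit HN factors of $F$, which it attributes to \Cref{L:eventual t-structure} alone (the citation of \Cref{P:tstab}(2) in your argument is not strictly needed, since the truncated filtration is already a limit HN filtration by definition).
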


\begin{proof}
We consider $\cC^E_{\preceq E}$, $\cC^E_{\prec E}$ being similar. Let $\{H^i\}$ denote the cohomology functors associated to $\cA^E_0$. By \Cref{L:eventual t-structure}, the limit HN factors of $H^i(F)$ are shifts of limit HN factors of $F$,\endnote{By definition, $\cP^E(-1,0] = \cA^E_0$ and \Cref{L:eventual t-structure} states that the filtrations in $\cP^E$ are exactly the limit HN filtrations with appropriate phase labeling. Since these $\cP^E$-filtrations are constructed by refining the t-filtration of $\cA^E_0$, the result follows.} whence $H^i(F) \in \cA^E_0 \cap \cC_{\preceq E}$ for all $i$ if and only if $F\in \cC^E_{\preceq E}$. So, \Cref{L:homology_in_serre_subcat} and 
\Cref{L:eventual_heart_serre_subcategory} imply the result.
\end{proof}

\begin{lem}\label{L:LePreciExtension} 
    $\cC_{\preceq E} = \langle \lss(\iprec E), \cC^E_{\preceq E}\rangle$ and $\cC_{\prec E} = \langle \lss(\iprec E), \cC^E_{\prec E}\rangle$.
\end{lem}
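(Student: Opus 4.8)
The plan is to prove both equalities by exhibiting, for each object of the left-hand side, a two-step filtration realizing it as an extension of an object of $\cC^E_{\preceq E}$ (resp. $\cC^E_{\prec E}$) by an object of $\lss(\iprec E)$, and conversely checking that every such extension lies in $\cC_{\preceq E}$ (resp. $\cC_{\prec E}$); the semiorthogonality needed to upgrade this to an SOD is already available from \Cref{L:no_homs_right_to_left}. I will treat $\cC_{\preceq E}$ in detail, the case of $\cC_{\prec E}$ being identical with $\preceq$ replaced by $\prec$ throughout.

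First I would check the containment $\langle \lss(\iprec E), \cC^E_{\preceq E}\rangle \subseteq \cC_{\preceq E}$. An object $X$ of $\langle \lss(\iprec E), \cC^E_{\preceq E}\rangle$ fits in an exact triangle $A \to X \to B$ with $A \in \lss(\iprec E)$ and $B \in \cC^E_{\preceq E}$. The limit HN factors of $X$ are, by \Cref{P:endnoteprop} applied to $A$ and $B$ (whose asymptotic phases separate since any limit HN factor of $A$ has $\iprec E$ and any limit HN factor of $B$ is $\isim E$, hence $\liminf \phi_t(\text{factor of }B) - \phi_t(\text{factor of }A) = \infty$ by \Cref{L: preorder_possibilities}), exactly the limit HN factors of $A$ together with those of $B$. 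Each such factor is $\preceq E$: factors of $A$ are $\iprec E$ hence $\prec E$ hence $\preceq E$ by \Cref{D:preordering}, and factors of $B$ are $\preceq E$ by definition of $\cC^E_{\preceq E}$. Thus $X \in \cC_{\preceq E}$.

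Conversely, I would show $\cC_{\preceq E} \subseteq \langle \lss(\iprec E), \cC^E_{\preceq E}\rangle$. Let $X \in \cC_{\preceq E}$, with limit HN filtration having subquotients $G_1, \ldots, G_n$ ordered so that $\phi_t(G_1) > \cdots > \phi_t(G_n)$ in $\mathfrak{A}$. Each $G_i$ satisfies $G_i \preceq E$, hence $G_i \ipreceq E$, and by \Cref{L: preorder_possibilities} the relation $\ipreceq E$ means $\phi_t(G_i) - \phi_t(E)$ does not tend to $-\infty$; combined with the ordering of the $G_i$, there is an index $m$ (possibly $0$ or $n$) such that $G_i \iprec E$ for $i \le m$ and $G_i \isim E$ for $i > m$ — indeed once a subquotient is $\isim E$, all later ones have smaller asymptotic phase but are still $\ipreceq E$, so by \Cref{L: preorder_possibilities} they lie within bounded distance of $\phi_t(E)$ and hence are $\isim E$ (they cannot be $\iprec E$ since that would force $\phi_t$ difference to $+\infty$ for the earlier $\isim E$ one). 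Coarsening the limit HN filtration at the index $m$ gives a triangle $A \to X \to B$ where $A$ has limit HN subquotients $G_1, \ldots, G_m$, all $\iprec E$, so $A \in \lss(\iprec E)$, and $B$ has limit HN subquotients $G_{m+1}, \ldots, G_n$, all $\isim E$, so $B \in \cC^E$. It remains to see $B \in \cC^E_{\preceq E}$, i.e. $B \preceq E$; since all limit HN factors $G_i$ of $B$ (for $i > m$) satisfy $G_i \preceq E$, this is immediate. Hence $X \in \langle \lss(\iprec E), \cC^E_{\preceq E}\rangle$, and combined with the semiorthogonality $\Hom^k(\cC^E, \lss(\iprec E)) = 0$ from \Cref{L:no_homs_right_to_left}, we get the asserted SOD.

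The main obstacle is the bookkeeping in the converse direction: making rigorous the claim that the limit HN subquotients of an object of $\cC_{\preceq E}$ split cleanly into a ``$\iprec E$'' initial segment and a ``$\isim E$'' final segment, and that coarsening the filtration at that point produces subobjects in the named subcategories. This rests on \Cref{L: preorder_possibilities} together with the total order on $\mathfrak{A}$ from \Cref{C:ordering_A}: the point is that $\isim$ is an equivalence class for an interval of asymptotic phases of bounded length, while $\iprec E$ forces divergence, so the $\isim E$ subquotients really do form a consecutive block at the bottom of the HN filtration. Once this combinatorial fact is pinned down, the rest follows formally from \Cref{P:tstab}(2) and the definitions.
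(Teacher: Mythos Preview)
Your overall strategy matches the paper's: the paper simply invokes \Cref{P:ImSOD}, noting that for $X \in \cC_{\preceq E}$ each limit HN factor is either $\iprec E$ or lies in $\cC^E_{\preceq E}$, so the SOD filtration from \Cref{P:ImSOD} produces the required two-step triangle. Your proof unpacks this via the limit HN filtration directly, which is fine.

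However, there is a consistent orientation error running through your argument. Recall from \Cref{D:preordering} that $G_i \iprec E$ means $\phi_t(E) - \phi_t(G_i) \to +\infty$, i.e.\ $G_i$ has \emph{much smaller} asymptotic phase than $E$, whereas $G_i \isim E$ means the difference stays bounded. Consequently, among the limit HN subquotients $G_1,\dots,G_n$ ordered with $\phi_t(G_1) > \cdots > \phi_t(G_n)$, the $\isim E$ factors form the \emph{initial} block (larger phases) and the $\iprec E$ factors form the \emph{terminal} block (smaller phases) --- exactly the opposite of what you claim. Your sentence ``$G_i \ipreceq E$ means $\phi_t(G_i)-\phi_t(E)$ does not tend to $-\infty$'' is also reversed; it should read ``does not tend to $+\infty$''. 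As a result, the coarsened triangle is $B \to X \to A$ with $B \in \cC^E_{\preceq E}$ (first, larger phase) and $A \in \lss(\iprec E)$ (second, smaller phase), matching the SOD convention $\langle \lss(\iprec E), \cC^E_{\preceq E}\rangle$ and the vanishing $\Hom(\cC^E_{\preceq E}, \lss(\iprec E)) = 0$ from \Cref{L:no_homs_right_to_left}. Your triangle $A \to X \to B$ is backwards, and the concatenation hypothesis $\phi_t^-(A) > \phi_t^+(B)$ you would need for \Cref{P:tstab}(2) fails. Swapping the roles of $A$ and $B$ (and of the two blocks of indices) throughout fixes the argument.
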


\begin{proof}
    This is an immediate consequence of \Cref{P:ImSOD}.\endnote{$X\in \cC_{\preceq E}$ has limit HN factors $G_j$ such that $G_j\iprec E$ or $G_j\in \cC_{\preceq E}\cap \cC^E$. So, by \Cref{P:ImSOD}, there exists a triangle $F'\to X \to F$ with $F\in \lss(\iprec E)$ and $F'\in \cC_{\preceq E}\cap \cC^E.$ \Cref{P:ImSOD} implies $\Hom(\cC_{\preceq E}\cap \cC^E, \lss(\iprec E)) = 0$. The second claim about $\cC_{\prec E}$ is analogous.}
\end{proof}

\begin{cor}
\label{L:precthick}
$\cC_{\preceq E}$ and $\cC_{\prec E}$ are thick pre-triangulated subcategories of $\cC$. 
\end{cor}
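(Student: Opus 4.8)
The plan is to deduce this corollary directly from the structural results already established, just as the one-line proofs of the nearby lemmas suggest. The key observation is that \Cref{L:LePreciExtension} exhibits $\cC_{\preceq E}$ as the triangulated closure of an ordered pair of subcategories, $\cC_{\preceq E} = \langle \lss(\iprec E), \cC^E_{\preceq E}\rangle$, and likewise for $\cC_{\prec E}$ with $\cC^E_{\prec E}$ in place of $\cC^E_{\preceq E}$. So it suffices to know that each of the two factors appearing in this SOD is a thick pre-triangulated subcategory of $\cC$, and that a subcategory admitting a semiorthogonal decomposition into thick pre-triangulated pieces is itself thick and pre-triangulated.

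First I would recall that $\lss(\iprec E)$ is a thick pre-triangulated subcategory of $\cC$ by \Cref{C:Prec I pre-triangulated}. Next, $\cC^E_{\preceq E}$ and $\cC^E_{\prec E}$ are thick pre-triangulated subcategories of $\cC^E$ by \Cref{L:precimthick}, and $\cC^E = \lss(\isim E)$ is in turn a thick pre-triangulated subcategory of $\cC$ by \Cref{P:ImSOD}; since "thick pre-triangulated subcategory of" is transitive, $\cC^E_{\preceq E}$ and $\cC^E_{\prec E}$ are thick pre-triangulated subcategories of $\cC$ as well. It then remains to check the general fact that if $\cC_{\preceq E} = \langle \cA, \cB\rangle$ with $\cA,\cB \subset \cC$ thick pre-triangulated, then $\cC_{\preceq E}$ is thick and pre-triangulated: the triangulated closure of a finite semiorthogonal collection of pre-triangulated subcategories is pre-triangulated (it is closed under shifts and cones, the latter since every object is built by extensions from $\cA$ and $\cB$ which are themselves closed under cones), and it is thick because if $X \oplus Y$ lies in $\langle \cA,\cB\rangle$ then the functorial SOD filtration of $X\oplus Y$ splits compatibly with the decomposition $X\oplus Y = X \oplus Y$, forcing the SOD projections of $X$ and $Y$ to land in $\cA$ and $\cB$, hence $X, Y \in \langle \cA,\cB\rangle$. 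Alternatively, one can simply note that $\cC_{\preceq E}$ is characterized inside $\cC$ by a $\Hom$-vanishing condition against the other SOD pieces $\cC^F$, exactly as in the last sentence of the proof of \Cref{P:ImSOD}, and such subcategories are automatically thick and pre-triangulated.

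I expect no real obstacle here; the corollary is a formal consequence of \Cref{L:LePreciExtension}, \Cref{C:Prec I pre-triangulated}, \Cref{L:precimthick}, and \Cref{P:ImSOD}, together with the transitivity of the notion "thick pre-triangulated subcategory" and the stability of this notion under forming triangulated closures of semiorthogonal collections. The only point requiring a word of care is the thickness claim, i.e.\ closure under direct summands, for which one invokes the uniqueness and functoriality of the SOD filtration (as in \cite{B-KSerre}) to split a summand decomposition through the SOD — but this is precisely the same mechanism already used implicitly in \Cref{P:ImSOD}, so I would keep the argument brief and refer back to it.
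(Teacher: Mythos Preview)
Your main argument is essentially correct and, once made precise, coincides with the paper's. The paper phrases it more cleanly: from \Cref{L:LePreciExtension} one has $\cC_{\preceq E} = \langle \lss(\iprec E), \cC^E_{\preceq E}\rangle \subset \lss(\ipreceq E) = \langle \lss(\iprec E), \cC^E\rangle$, so $\cC_{\preceq E}$ is exactly the preimage of $\cC^E_{\preceq E}$ under the projection $\lss(\ipreceq E)\to \cC^E$; since the preimage of a thick subcategory under an exact functor is thick, the result follows from \Cref{L:precimthick}. Your argument that the functorial SOD filtration of $X\oplus Y$ ``splits compatibly'' is really this preimage argument in disguise: you need to know $X,Y\in \lss(\ipreceq E)$ first (which uses thickness of $\lss(\ipreceq E)$, i.e.\ \Cref{C:Prec I pre-triangulated}) before the SOD projections are even defined on $X$ and $Y$ separately. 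As a \emph{general} lemma, ``the extension closure of a semiorthogonal pair of thick subcategories is thick'' is not obvious without idempotent completeness of the ambient category, which is not assumed here; the preimage formulation sidesteps this entirely.

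One genuine error: your alternative argument is wrong. The subcategory characterized inside $\cC$ by $\Hom$-vanishing against the other SOD pieces $\cC^F$ (for $F\isucc E$) is $\lss(\ipreceq E)$, not $\cC_{\preceq E}$. In general $\cC_{\preceq E}\subsetneq \lss(\ipreceq E)$ whenever the $\isim$-class of $E$ contains more than one $\sim$-class, so $\cC_{\preceq E}$ cannot be cut out by orthogonality conditions against the $\cC^F$ alone. Drop that alternative and keep the preimage argument.
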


\begin{proof}
    \Cref{L:LePreciExtension} implies that $\cC_{\preceq E} = \langle \lss(\iprec E), \cC_{\preceq E}^E \rangle \subset \lss(\ipreceq E) = \langle \lss(\iprec E), \cC^E \rangle$ is precisely the preimage of the subcategory $\cC^E_{\preceq E}$ under the projection to $\cC^E$. The claim then follows from \Cref{L:precimthick} and the fact that the preimage of a thick subcategory is thick.\endnote{Let $\pi:\cC\to \cD$ denote an exact functor of triangulated categories. Let $\cS\subset \cD$ denoted a (full) thick triangulated subcategory. $\pi^{-1}(\cS)$ denotes its essential preimage. Suppose $X\oplus Y\in \pi^{-1}(\cS)$. Then $\pi(X\oplus Y) \cong \pi(X)\oplus \pi(Y)$ and hence $\pi(X)\in \cS$, whence $X\in \pi^{-1}(\cS)$.}
\end{proof}

\begin{thm}\label{P:exit_sequence_filtration}
The collection $\{\cC_{\preceq E}\}_{E\in \lss/{\sim}}$ defines a filtration of $\cC$ by thick pre-triangulated
subcategories refining the admissible filtration $\{\lss(\ipreceq F)\}_{F \in \lss/{\isim}}$. Also, there is an induced filtration $\{\cC_{\preceq E}^F\}_{E\isim F}$ of each $\cC^F$ by thick pre-triangulated sub\-categories.
\end{thm}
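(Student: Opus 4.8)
The plan is to assemble \Cref{P:exit_sequence_filtration} from the structural results already established, so that essentially nothing new needs to be proven — only the assembly needs to be spelled out. First I would address the claim that $\{\cC_{\preceq E}\}_{E\in \lss/{\sim}}$ is a filtration by thick pre-triangulated subcategories. That each $\cC_{\preceq E}$ is thick and pre-triangulated is exactly \Cref{L:precthick}. That the collection is totally ordered (hence a genuine filtration) follows from \Cref{L:orderproperties}, which gives a total order on $\lss/{\sim}$ induced by $\preceq$, together with the observation that $E \preceq E'$ forces $\cC_{\preceq E} \subseteq \cC_{\preceq E'}$: an object of $\cC_{\preceq E}$ has all limit HN factors $\preceq E \preceq E'$, and by transitivity these are $\preceq E'$. (One should remark that $\bigcup_E \cC_{\preceq E}$ exhausts $\cC$ and $\bigcap$ is trivial, using that every nonzero object has a limit HN filtration with finitely many factors and each factor lies in some $\cC^E$.)

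Next I would show this filtration refines the admissible filtration $\{\lss(\ipreceq F)\}_{F\in\lss/{\isim}}$. The word ``refines'' means: each $\lss(\ipreceq F)$ equals $\cC_{\preceq E}$ for a suitable $E$, or more precisely each stratum of the coarse filtration is a union of consecutive strata of the fine one. Here the key input is that $\preceq$ refines $\ipreceq$ by construction (\Cref{D:preordering}: $F \prec E$ requires $F \iprec E$ or $E \isim F$), so the order map $\lss/{\sim} \to \lss/{\isim}$ is order-preserving and surjective; pulling back the filtration indices gives exactly the refinement statement. The admissibility of $\{\lss(\ipreceq F)\}$ — i.e., that each $\lss(\ipreceq F)$ is an admissible (left and right) subcategory — is already recorded: by \Cref{C:Prec I pre-triangulated} and \Cref{P:ImSOD}, $\lss(\ipreceq F) = \langle \cC^{F'} : F' \ipreceq F\rangle$ is a ``lower'' piece of the semiorthogonal decomposition $\cC = \langle \cC^E : E \in \lss/{\isim}\rangle$, hence admissible.

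For the last sentence — the induced filtration $\{\cC^F_{\preceq E}\}_{E \isim F}$ of $\cC^F$ — I would invoke \Cref{L:precimthick}, which already states that each $\cC^F_{\preceq E}$ (there written $\cC^E_{\preceq E}$; here $E\isim F$ so $\cC^E = \cC^F$) is a thick pre-triangulated subcategory of $\cC^F$. That these form a filtration again follows from the total order on $\{E : E\isim F\}/{\sim}$, a sub-poset of $\lss/{\sim}$, via \Cref{L:orderproperties}; and the compatibility with the global filtration is \Cref{L:LePreciExtension}, which expresses $\cC_{\preceq E} = \langle \lss(\iprec E), \cC^E_{\preceq E}\rangle$, exhibiting $\cC^F_{\preceq E}$ as precisely $\cC_{\preceq E} \cap \cC^F$ and showing the two filtrations fit together as depicted in \Cref{F:filtrationpicture}.

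I do not expect a serious obstacle here: every component has been isolated as a prior lemma, and the proof is a matter of citing \Cref{L:precthick}, \Cref{L:orderproperties}, \Cref{D:preordering}, \Cref{P:ImSOD}, \Cref{C:Prec I pre-triangulated}, \Cref{L:precimthick}, and \Cref{L:LePreciExtension} in the right order. The one point requiring a sentence of care is the meaning of ``refines the admissible filtration'': I would make explicit that it means the index map $\lss/{\sim}\twoheadrightarrow\lss/{\isim}$ is a monotone surjection of totally ordered sets and that $\lss(\ipreceq F) = \bigcup_{E \mapsto F'\,:\,F' \ipreceq F} \cC_{\preceq E}$, so that no confusion arises about whether ``refinement'' is of the indexing poset or of the subcategories themselves.
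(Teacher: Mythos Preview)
Your proposal is correct and follows essentially the same route as the paper: both arguments assemble the theorem from \Cref{L:precthick}, \Cref{L:orderproperties}, \Cref{P:ImSOD}, and \Cref{L:precimthick}, with the refinement claim reduced to the observation that $E \isim F$ forces $\cC_{\preceq E} \subset \lss(\ipreceq F)$. The paper's proof is considerably terser (four sentences), and your additional remarks on exhaustiveness and the explicit description $\lss(\ipreceq F) = \bigcup_{E \ipreceq F} \cC_{\preceq E}$ are correct elaborations but not strictly needed; your first gloss of ``refines'' (that each $\lss(\ipreceq F)$ equals some $\cC_{\preceq E}$) is not right in general, but you correctly self-correct to the union formulation.
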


\begin{proof}
    For $E,F\in \lss$, the condition $E\prec F$ implies that $\cC_{\preceq E}\subset \cC_{\prec F}\subsetneq \cC_{\preceq F}$. Therefore, $\{\cC_{\preceq E}\}_{E\in \lss/{\sim}}$ is a filtration by thick pre-triangulated subcategories by \Cref{L:precthick}. Similarly, we get a filtration $\{\lss(\ipreceq F)\}_{F \in \lss/{\isim}}$ which is (left) admissible, corresponding to the semiorthogonal decomposition $\cC = \langle \cC^F: F \in \lss/{\isim}\rangle$ by \Cref{P:ImSOD}.\endnote{Following \cite{B-KSerre}, a full subcategory $i:\cD\hookrightarrow \cC$ is \emph{left (resp. right) ad\-missible} if $i$ admits a left (resp. right) adjoint. Equivalently, $\cC = \langle \cD, {}^\perp\cD\rangle$ (resp. $\cC = \langle \cD^\perp,\cD\rangle$) is a semiorthogonal decomposition. A category is \emph{admissible} if it is left and right admissible. We will suppose our subcategories are left admissible here, however for a smooth and proper category $\cC$ left and right admissibility are equivalent. 

A filtration $\{\cD_i\}_{i=1}^n$ (assumed finite only for simplicity) of $\cC$ is called left ad\-missible if each inclusion $\cD_i\hookrightarrow \cD_{i+1}$ is left admissible. Such a filtration gives rise to a semiorthogonal decomposition of $\cC$ by writing $\cD_2 = \langle \cD_1,{}^\perp\cD_1\rangle$, $\cD_3 = \langle \cD_2,{}^\perp \cD_2\rangle = \langle \langle \cD_1,{}^\perp\cD_1\rangle,{}^\perp \cD_2\rangle$, etc. Conversely, given a semiorthogonal decomposition $\cC = \langle \cC_1,\ldots, \cC_n\rangle$, one obtains a left admissible filtration by putting $\cD_i = \langle \cC_1,\ldots, \cC_i\rangle$ for $1\le i \le n$. One can check that these processes are mutually inverse.
} If $E\isim F$, $\cC_{\preceq E}\subset \lss(\ipreceq F)$, and the refinement claim follows. The claim about the induced filtration on $\cC^F$ is immediate using \Cref{L:precimthick}.
\end{proof}

\subsection{Stability conditions on the subquotients}
This section is dedicated to proving the follow\-ing theorem:

\begin{thm}
\label{T:prestabilityonquotient}
For any $E\in \lss$, there exists a unique pre-stability condition $\sigma_E=(Z_E,\cP_E)$ on $\cC_{\preceq E}/\cC_{\prec E}$ such that
\begin{enumerate}
    \item $\cP_E(\phi)$ consists of the essential image of $\lss(\phi_t(E)+\phi) \subset \cC_{\preceq E}$ in the quotient; and \vspace{2mm}
    \item for any limit semistable $F \sim E$,
    \[
    Z_E(F) = \exp(\lim_{t \to \infty} \ell_t(F/E)) = \lim_{t\to \infty} Z_t(F)/Z_t(E).
    \]
\end{enumerate}
Also, if $E \sim E'$, then under the natural identification $\cC_{\preceq E'}/\cC_{\prec E'} = \cC_{\preceq E}/\cC_{\prec E}$, one has $\sigma_E = (\lim_{t\to\infty} \ell_t(E'/E))\cdot \sigma_{E'}$.
\end{thm}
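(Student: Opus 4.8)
The plan is to build the prestability condition $\sigma_E$ directly from the ambient data on $\cC_{\preceq E}$, then verify the axioms. First I would observe, using \Cref{L:LePreciExtension} together with the SOD $\cC_{\preceq E} = \langle \lss(\iprec E), \cC^E_{\preceq E}\rangle$, that the quotient $\cC_{\preceq E}/\cC_{\prec E}$ can be identified with $\cC^E_{\preceq E}/\cC^E_{\prec E}$, since $\cC_{\prec E} = \langle \lss(\iprec E), \cC^E_{\prec E}\rangle$ and passing to the quotient kills the common factor $\lss(\iprec E)$. This reduces everything to working inside $\cC^E$, where by \Cref{L:eventual t-structure} we have a slicing $\cP^E$ and hearts $\cA^E_a$, and by \Cref{L:eventual_heart_serre_subcategory} the subcategories $\cA^E_a \cap \cC_{\prec E}$ are Serre subcategories, so the quotient abelian category $\cA^E_a/(\cA^E_a \cap \cC_{\prec E})$ makes sense and is the heart of an induced bounded t-structure on the Verdier quotient (by a standard localization-of-t-structures argument, using \Cref{L:precimthick} and \Cref{L:eventual_heart_serre_subcategory}).

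Next I would define $Z_E$ on $K_0(\cC_{\preceq E}/\cC_{\prec E})$ by the formula in (2): for a limit semistable $F \sim E$, set $Z_E(F) = \lim_{t\to\infty} Z_t(F)/Z_t(E)$, which exists and is nonzero precisely because $F \sim E$ means $\lim_{t\to\infty}\ell_t(F/E)$ exists in $\bC$ (\Cref{D:preordering}(5)), and $\ell_t$ agrees asymptotically with $\logZ_t$ on limit semistable objects by \Cref{L:logZcomp}; then extend additively. One must check this is well-defined on $K_0$: objects of $\cC_{\prec E}$ should map to $0$, which follows because their limit HN factors $A$ satisfy $\lim_{t\to\infty} m_t(A)/m_t(E) = 0$, so $Z_t(A)/Z_t(E) \to 0$. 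The slicing $\cP_E(\phi)$ is declared to be the essential image of $\lss(\phi_t(E)+\phi) \cap \cC_{\preceq E}$; its defining axioms — the shift compatibility, the vanishing of $\Hom$ between descending phases, and existence of HN filtrations — are inherited from the corresponding properties of the limit HN filtrations via \Cref{P:tstab} and the fact that HN filtrations descend to the Verdier quotient. The mass-positivity axiom $Z_E(F) = m(F)e^{i\pi\phi}$ for $F$ semistable of phase $\phi$ follows from the phase bookkeeping: $\phi^E(F) = \phi$ translates into $\arg Z_E(F) = \pi\phi$ under the limit, and the limit of $Z_t(F)/Z_t(E)$ is genuinely nonzero with positive modulus. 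Uniqueness is then forced: the slicing is prescribed in (1), and a central charge compatible with a given slicing is determined by its values on semistable objects, which are pinned down by (2) plus additivity since the limit semistable $F \sim E$ generate.

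For the final $\bC$-equivariance statement: given $E \sim E'$, the identification $\cC_{\preceq E'}/\cC_{\prec E'} = \cC_{\preceq E}/\cC_{\prec E}$ holds on the nose since $\sim$-equivalent objects define the same subcategories. Write $\alpha := \lim_{t\to\infty}\ell_t(E'/E) \in \bC$, which exists because $E \sim E'$. Then for any limit semistable $F \sim E$ (equivalently $F\sim E'$) one computes
\[
Z_E(F) = \lim_{t\to\infty}\frac{Z_t(F)}{Z_t(E)} = \lim_{t\to\infty}\frac{Z_t(F)}{Z_t(E')}\cdot\frac{Z_t(E')}{Z_t(E)} = e^{\alpha}\,Z_{E'}(F),
\]
so $Z_E = e^{\alpha}Z_{E'}$ on the generators, hence on all of $K_0$. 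On the slicing side, $\cP_E(\phi)$ is the image of $\lss(\phi_t(E)+\phi)$ while $\cP_{E'}(\phi)$ is the image of $\lss(\phi_t(E')+\phi)$, and $\phi_t(E') - \phi_t(E) \to \Im(\alpha)/\pi$, so $\cP_E(\phi) = \cP_{E'}(\phi - \Im(\alpha)/\pi)$; comparing with the definition of the $\bC$-action recalled in \Cref{S:background}, this says exactly $\sigma_E = \alpha\cdot\sigma_{E'}$.

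The main obstacle I expect is the careful verification that the induced t-structure and slicing genuinely descend to the Verdier quotient $\cC_{\preceq E}/\cC_{\prec E}$ — i.e., that passing to the quotient does not create new objects without HN filtrations and does not collapse distinct phases — and the compatibility between the ``limsup'' phase function $\phi^E$ from \Cref{L:eventual t-structure} and the genuine phases of $\cP_E$; this is where \Cref{L:massproportionality} and \Cref{L:eventual_heart_serre_subcategory} have to be combined delicately, since the subtlety is that masses (not just phases) control membership in $\cC_{\prec E}$, so the real part of $\ell_t$ must be tracked throughout rather than only the imaginary part.
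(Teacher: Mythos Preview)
Your overall strategy matches the paper's: reduce to $\cC^E_{\preceq E}/\cC^E_{\prec E}$ via \Cref{L:LePreciExtension} (the paper packages this as \Cref{L:Jequiv}), define $Z_E$ and $\cP_E$ by the given formulas, verify the axioms, and deduce the $\bC$-equivariance by the same limit computation you wrote. The definition of $Z_E$, the check that it kills $\cC_{\prec E}$, and the final equivariance paragraph are essentially the paper's argument.

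The genuine gap is exactly the point you flag as the ``main obstacle,'' but your proposed fix does not close it. You say the Hom-vanishing axiom for the slicing is ``inherited from the corresponding properties of the limit HN filtrations via \Cref{P:tstab}.'' It is not: morphisms in a Verdier quotient are roofs $A \leftarrow A' \to B$ with $\Cone(A' \to A) \in \cC^E_{\prec E}$, so even if $\Hom_{\cC}(A,B)=0$ one can have nonzero morphisms in the quotient. A ``standard localization-of-t-structures argument'' from \Cref{L:eventual_heart_serre_subcategory} and \Cref{L:precimthick} would at best produce a bounded t-structure on the quotient, not the finer slicing; to upgrade to a prestability condition along that route you would still need to establish the HN property for $Z_E$ on the quotient heart, and you have not indicated how.

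The paper supplies the missing ingredient as a separate lemma (\Cref{L:PhaseConvFactors}): given $A,B\in\lss$ with $A\sim B\sim E$ and $\phi^E(A)>\phi^E(B)$, and any roof $A\leftarrow A'\to B$ with $\Cone(A'\to A)\in\cC^E_{\prec E}$, one can find $A''\to A'$ with $\Cone(A''\to A)\in\cC^E_{\prec E}$ and $\Hom_{\cC}(A'',B)=0$. This is proved by truncating $A'$ at the last limit HN factor $\sim E$ and then using the long exact sequence in a heart $\cA^E_a$ together with the Serre property (\Cref{L:eventual_heart_serre_subcategory}) to force $\phi^E$ of every remaining factor to exceed $\phi^E(B)$. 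That lemma is the actual content; once you have it, every roof is equivalent to zero and $\Hom_{\cC_{\preceq E}/\cC_{\prec E}}(\cP_E(\phi_1),\cP_E(\phi_2))=0$ follows. Your plan would be complete if you inserted an argument of this kind in place of the appeal to \Cref{P:tstab}.
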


Consider the diagram:
\[
\begin{tikzcd}
    \cC_{\prec E}^E \arrow[r,hook,"\rm{thick}"]\arrow[d,hook]&\cC_{\preceq E}^E \arrow[r,"\pi"]\arrow[d,hook]&\cC_{\preceq E}^E/\cC_{\prec E}^E \arrow[d,dashed,"\exists!J"]\\
    \cC_{\prec E}\arrow[r,hook,"\rm{thick}"]& \cC_{\preceq E}\arrow[r]&\cC_{\preceq E}/\cC_{\prec E}.
\end{tikzcd}
\]
The arrows labeled ``thick'' are inclusions of thick subcategories by \Cref{L:precimthick} and \Cref{L:precthick}, respectively. The universal property of $\pi$ gives a unique morphism of pre-triangulated dg-categ\-ories $J$ fitting into the above diagram \cite[Thm. 1.6.2]{Drinfelddg}. 

\begin{lem}
\label{L:Jequiv}
The functor $J:\cC_{\preceq E}^E/\cC_{\prec E}^E\to \cC_{\preceq E}/\cC_{\prec E}$ is an equivalence.
\end{lem}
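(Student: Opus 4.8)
The plan is to show $J$ is essentially surjective and fully faithful by exploiting the semiorthogonal decomposition $\cC_{\preceq E} = \langle \lss(\iprec E), \cC^E_{\preceq E}\rangle$ and $\cC_{\prec E} = \langle \lss(\iprec E), \cC^E_{\prec E}\rangle$ from \Cref{L:LePreciExtension}, which exhibit $\lss(\iprec E)$ as a common thick subcategory with compatible complements. First I would handle essential surjectivity: any object $X \in \cC_{\preceq E}$ sits in a triangle $F' \to X \to F$ with $F \in \lss(\iprec E)$ and $F' \in \cC^E_{\preceq E}$ (as in the proof of \Cref{L:LePreciExtension}); since $F$ lies in $\cC_{\prec E}$, its image in $\cC_{\preceq E}/\cC_{\prec E}$ vanishes, so $X$ and $F'$ become isomorphic in the quotient. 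Hence every object of $\cC_{\preceq E}/\cC_{\prec E}$ is isomorphic to the image of an object of $\cC^E_{\preceq E}$, which is exactly the essential image of $J$.

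Next I would address full faithfulness. The cleanest route is to use the general principle that if $\cB \subset \cA$ is a thick subcategory and $\cS \subset \cA$ is a thick subcategory with $\cS \cap \cB$ thick in $\cS$, and $\cA = \langle \cB, \cS\rangle$ with $\cB$ on the left, then the inclusion $\cS \hookrightarrow \cA$ induces an equivalence $\cS/(\cS\cap \cB) \xrightarrow{\sim} \cA/\cB$ — this is a standard fact about Verdier (or Drinfeld) quotients relative to a semiorthogonal decomposition. Applying it with $\cA = \cC_{\preceq E}$, $\cB = \cC_{\prec E}$, $\cS = \cC^E_{\preceq E}$, and using that $\cS \cap \cB = \cC^E_{\preceq E} \cap \cC_{\prec E} = \cC^E_{\prec E}$ (by the analogous decomposition of $\cC_{\prec E}$ together with semiorthogonality killing the $\lss(\iprec E)$ part), gives precisely that $J$ is an equivalence. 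Concretely, to verify full faithfulness by hand one computes $\Hom$ in the quotient via the calculus of fractions: a morphism $\pi(A) \to \pi(B)$ in $\cC_{\preceq E}/\cC_{\prec E}$ is represented by a roof $A \leftarrow A' \to B$ with $\Cone(A'\to A) \in \cC_{\prec E}$; using the SOD one replaces $A'$ by its $\cC^E_{\preceq E}$-component $A''$ (the $\lss(\iprec E)$-part of the cone being invisible), reducing any such roof to one with all terms in $\cC^E_{\preceq E}$ and cone in $\cC^E_{\prec E}$, which is the same as a morphism computed in $\cC^E_{\preceq E}/\cC^E_{\prec E}$.

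The main obstacle is the bookkeeping in the calculus-of-fractions argument: one must check that the reduction of roofs to the $\cC^E$-component is well-defined on equivalence classes of roofs and compatible with composition, i.e. genuinely that $\Hom_{\cC_{\preceq E}/\cC_{\prec E}}(\pi A, \pi B)$ and $\Hom_{\cC^E_{\preceq E}/\cC^E_{\prec E}}(\pi A, \pi B)$ agree for $A, B \in \cC^E_{\preceq E}$. The key input making this work is the semiorthogonality $\Hom^k(\cC^E_{\preceq E}, \lss(\iprec E)) = 0$ (from \Cref{P:ImSOD} / \Cref{L:no_homs_right_to_left}), which guarantees that the projection functor $\cC_{\preceq E} \to \cC^E$ (left adjoint to the inclusion, existing since $\lss(\iprec E)$ is left admissible) carries objects of $\cC_{\prec E}$ into $\cC^E_{\prec E}$ and is the identity on $\cC^E_{\preceq E}$; this projection descends to a quasi-inverse of $J$. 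I would therefore structure the proof as: (1) recall the projection functor $p: \cC_{\preceq E} \to \cC^E_{\preceq E}$ from the SOD; (2) show $p(\cC_{\prec E}) \subseteq \cC^E_{\prec E}$ so $p$ descends to $\bar p: \cC_{\preceq E}/\cC_{\prec E} \to \cC^E_{\preceq E}/\cC^E_{\prec E}$; (3) check $\bar p \circ J \cong \id$ directly since $p$ restricted to $\cC^E_{\preceq E}$ is the identity; and (4) check $J \circ \bar p \cong \id$ using that the unit $p(X) \to X$ (or its rotation) has cone in $\lss(\iprec E) \subseteq \cC_{\prec E}$, hence becomes an isomorphism in $\cC_{\preceq E}/\cC_{\prec E}$. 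This avoids the heaviest fraction computations by packaging them into the already-available adjunction from the semiorthogonal decomposition.
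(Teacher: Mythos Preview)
Your proposal is correct and uses the same essential ingredients as the paper: the semiorthogonal decompositions of \Cref{L:LePreciExtension} and the vanishing $\Hom(\cC^E_{\preceq E},\lss(\iprec E))=0$. Essential surjectivity is handled identically. For full faithfulness the paper takes a slightly different packaging: it invokes \cite[Lem.~4.7.1]{K09}, which reduces full faithfulness of $J$ to showing that any morphism $Y\to Z$ with $Y\in\cC^E_{\preceq E}$ and $Z\in\cC_{\prec E}$ factors through some object of $\cC^E_{\prec E}$; this is then verified using the SOD triangle $T\to Z\to Q$ with $T\in\cC^E_{\prec E}$ and $Q\in\lss(\iprec E)$, together with $\Hom(Y,Q)=\Hom(Y,Q[-1])=0$. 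Your route---building the quasi-inverse $\bar p$ from the SOD projection---is equally valid and arguably more self-contained, while the paper's citation keeps the argument to a few lines.

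One small correction: in the SOD $\cC_{\preceq E}=\langle \lss(\iprec E),\cC^E_{\preceq E}\rangle$ the projection $p$ onto $\cC^E_{\preceq E}$ is the \emph{right} adjoint to the inclusion (the triangle is $p(X)\to X\to Q$ with $Q\in\lss(\iprec E)$), not the left adjoint as you wrote. This does not affect your argument, since all you use is the existence of the projection and the shape of its defining triangle.
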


\begin{proof}
By \Cref{L:LePreciExtension}, any $X\in \cC_{\preceq E}$ fits into a triangle $S\to X\to Q$ with $S\in \cC_{\preceq E}^E$ and $Q\in \lss(\iprec E)$. This implies essential surjectivity. Consider $Y\to Z$ with $Y\in \cC_{\preceq E}^E$ and $Z\in \cC_{\prec E}$; if $Y\to Z$ factors through $\cC_{\prec E}^E$, $J$ is fully faithful by \cite[Lem. 4.7.1]{K09}. By \Cref{L:LePreciExtension}, there is a triangle $T\to Z \to Q$ with $Q\in \lss(\iprec E)$ and $T\in \cC_{\prec E}^E$. $\Hom(Y,Q) = \Hom(Y,Q[-1]) = 0$ since $Y\in \cC^E$ and $Q\in \lss(\iprec E)$. So, $\Hom(Y,Z) \cong \Hom(Y,T)$ via the induced map and $Y\to Z$ factors through $T \in \cC_{\prec E}^E$.
\end{proof}

\begin{lem}
\label{L:PhaseConvFactors}
    Suppose given $A,B \in \lss$ such that $A\sim B \sim E$ and $\phi^E(A) > \phi^E(B)$. For any diagram $A\leftarrow A'\to B$ with $\Cone(A'\to A) \in \cC_{\prec E}^E$, there exists $A'' \in \cC_{\preceq E}^E$ and a morphism $f:A''\to A'$ such that 
    \begin{enumerate}   
        \item $\Cone(A''\to A) \in \cC_{\prec E}^E$; and \vspace{2mm}
        \item $\Hom(A'',B) = 0$.
    \end{enumerate}
\end{lem}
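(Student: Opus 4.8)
The plan is to build $A''$ by ``truncating'' $A'$ with respect to the limit slicing $\cP^E$ on $\cC^E$ provided by \Cref{L:eventual t-structure}, discarding exactly the part whose asymptotic phase is $\ge \phi^E(B)$. Concretely, since $A' \in \cC_{\preceq E}^E$ lies in $\cC^E$, it has a limit HN filtration with subquotients $G_1,\ldots,G_m$ ordered so that $\phi^E(G_1) > \cdots > \phi^E(G_m)$. Pick the index $j$ so that $\phi^E(G_j) \le \phi^E(B) < \phi^E(G_{j-1})$ (so the $G_i$ for $i < j$ all have asymptotic phase strictly above $\phi^E(B)$); let $A'' \to A'$ be the canonical map whose cone is the ``low phase'' part, i.e. the stage of the Postnikov tower of $A'$ with subquotients $G_1,\ldots,G_{j-1}$. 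Because $\phi^E(A) > \phi^E(B)$ and $\Cone(A' \to A) \in \cC^E_{\prec E}$, I first need to observe that passing from $A'$ to $A$ does not disturb the part of phase $> \phi^E(B)$: the cone lies in $\cC^E_{\prec E}$, so its limit HN factors $H$ satisfy $H \isim E$ and $\lim_t m_t(H)/m_t(E) = 0$, which is compatible with any asymptotic phase; but the key point is that the high-phase truncation is functorial, so $A'' \to A' \to A$ has cone built from $\Cone(A'\to A)$ together with $G_j,\ldots,G_m$. Each $G_i$ with $i \ge j$ has $\phi^E(G_i) \le \phi^E(B)$ and $G_i \isim E$; I claim such objects lie in $\cC^E_{\prec E}$ — this needs care and is discussed below — and then property (1), $\Cone(A'' \to A) \in \cC^E_{\prec E}$, follows by the extension-closure from \Cref{L:precimthick}.

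For property (2), $\Hom(A'', B) = 0$: by construction $A''$ has limit HN factors $G_1,\ldots,G_{j-1}$, all with $\phi^E(G_i) > \phi^E(B)$; since $G_i \isim E \isim B$, the fact that $\phi^E(\cdot)$ strictly decreases means (by the no-homs part of the slicing axiom in \Cref{L:eventual t-structure}, applied inside $\cC^E$) that $\Hom(\cP^E(\phi^E(G_i)), \cP^E(\phi^E(B))) = 0$. But $B$ itself is semistable for $\cP^E$ of phase $\phi^E(B)$, so $\Hom(G_i, B) = 0$ for each $i < j$, and hence $\Hom(A'', B) = 0$ by the triangle relating $A''$ to its factors. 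Here I am using that $\phi^E(G_i) > \phi^E(B)$ implies $\phi^E(G_i) \ge \phi^E(B)$ with the inequality landing in the strict regime of the slicing; strictly speaking I should reduce to the case $\phi^E(G_i)$ and $\phi^E(B)$ within a half-open interval of length one, but since $A \sim B \sim E$ all phases $\phi^E$ are genuine real numbers and the slicing semiorthogonality applies directly.

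The main obstacle I anticipate is the claim that a limit semistable $G$ with $G \isim E$ and $\phi^E(G) \le \phi^E(B)$ (where $B \sim E$) actually lies in $\cC^E_{\prec E}$, i.e. that $\lim_{t\to\infty} m_t(G)/m_t(E) = 0$. This is \emph{not} automatic from the phase condition alone — it must be that having strictly smaller average phase than $B$, together with the hypothesis $\Cone(A' \to A) \in \cC^E_{\prec E}$ and $A \sim E$, forces the mass ratio to vanish. The right way to see this, I expect, is: if some $G_i$ with $i \ge j$ had $\lim_t m_t(G_i)/m_t(E) \not\to 0$ (equivalently $G_i \sim E$ with $\phi^E(G_i) < \phi^E(B) \le \phi^E(A)$), one compares masses of $A$ and $A'$. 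Since $\Cone(A'\to A) \in \cC^E_{\prec E}$ has mass negligible against $m_t(E)$, \Cref{L:massproportionality}-type estimates give $m_t(A') \asymp m_t(A) + o(m_t(E))$ up to the sector bound, so $m_t(A)/m_t(E)$ and $m_t(A')/m_t(E)$ have the same limiting behavior; but $A$ has $\phi^E(A) > \phi^E(B) > \phi^E(G_i)$, and the HN factor $G_i$ of $A'$ of strictly lower phase cannot contribute a nonzero share of the mass without dragging the average phase $\phi^E(A') = \phi^E(A)$ down — a contradiction with additivity of $\ell_t$ and the definition of $\phi^E$ as a $\limsup$. Making this last comparison rigorous (passing between $\phi^E$ defined via $\limsup$ and the mass-weighted average phase $\phi_t$, and controlling cross terms) is where the real work lies; everything else is bookkeeping with Postnikov towers and the already-established t-structure/slicing formalism of \Cref{L:eventual t-structure} and \Cref{L:precimthick}.
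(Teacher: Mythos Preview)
Your truncation-by-phase approach is dual to the paper's: you keep factors $G_1,\ldots,G_{j-1}$ with $\phi^E(G_i) > \phi^E(B)$, making (2) immediate but leaving (1) as the hard part; the paper keeps $G_1,\ldots,G_k$ where $k$ is the \emph{largest index with $G_k \sim E$}, making (1) immediate (the discarded $G_{k+1},\ldots,G_m$ are $\prec E$ by definition of $k$, and the octahedral axiom handles the rest) but leaving (2) as the hard part. In both cases the crux is the same claim: any limit HN factor $G_\ell$ of $A'$ with $G_\ell \sim E$ must satisfy $\phi^E(G_\ell) \ge \phi^E(A)$.

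The gap is in your proposed resolution of this crux. Your mass/average-phase heuristic does not close: from $Z_t(A') = Z_t(A) - Z_t(C)$ and $C \in \cC^E_{\prec E}$ one gets $\sum_{G_i \sim E} Z_E(G_i) = Z_E(A)$, but this central-charge identity does \emph{not} forbid some $G_i \sim E$ from having $\phi^E(G_i) < \phi^E(A)$ --- there could be cancellation among factors of higher and lower phase. There is no well-defined ``$\phi^E(A')$'' to drag down, since $A'$ is not limit semistable. The paper's argument is genuinely different: it places $A$ in the heart $\cA = \cA_a^{G_k}$, then uses the long exact cohomology sequence for $A'' \to A \to C$ together with the fact that $\cA \cap \cC^{G_k}_{\prec G_k}$ is a \emph{Serre} subcategory (\Cref{L:eventual_heart_serre_subcategory}) to show $H^i(A'') \in \cC^{G_k}_{\prec G_k}$ for all $i \neq 0$. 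This forces every $G_\ell \sim E$ to lie in $\cA$, hence $\phi^{G_k}(G_\ell) \in (-1,0]$; applied to $G_k$ itself this gives $a = 0$, whence $\phi^{G_k}(A) \le 0$, i.e., $\phi^E(G_k) \ge \phi^E(A)$. Your outline would become a correct proof if you replaced the mass heuristic with this cohomology/Serre-subcategory argument.
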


\begin{proof}
    Because $A\in \cC^E_{\preceq E}\setminus \cC_{\prec E}^E$, $\Cone(A'\to A) \in \cC_{\prec E}^E$ implies that $A' \in \cC_{\preceq E}^E\setminus \cC_{\prec E}^E$. Write the limit HN filtration of $A'$ as $0 = X_0\to X_1\to \cdots \to X_m = A'$ with factors $\{G_i = \Cone(X_{i-1}\to X_i)\}_{i=1}^m$. For all $t$ sufficiently large, one has $\phi_t(G_1)>\cdots > \phi_t(G_m)$. Let $k$ denote the largest index such that $G_k\sim E$. Such an index exists by $A' \in \cC_{\preceq E}^E\setminus \cC_{\prec E}^E$. Put $A'' := X_k$. The morphism $f:A''\to A'$ is the one from the filtration and consequently $\Cone(f) \in \cC_{\prec E}^E$. 

    (1) is a consequence of the octahedral axiom applied to $A''\xrightarrow{f} A'\to \Cone(f)$, $A'\xrightarrow{g} A\to \Cone(g)$, and $A''\xrightarrow{h} A\to \Cone(h)$ where $h = g\circ f$, noting that $\Cone(f)$ and $\Cone(g) \in \cC_{\prec E}^E$.\endnote{The octahedral axiom applied here gives an exact triangle $\Cone(f) \to \Cone(h)\to \Cone(g)$ implying that $\Cone(h)\in \cC_{\prec E}^E$.} 

    The limit HN filtration of $A''$ is the truncation of that of $A'$ and has limit HN factors $\{G_i\}_{i=1}^k$. Since $A\sim E \sim G_k$, $\phi^E(A) = \lim_{t\to\infty} \phi_t(A) - \phi_t(E)$ and likewise for $\phi^E(G_k).$ In particular, $\lim_{t\to\infty} \phi_t(A) - \phi_t(G_k) \in (a-1,a]$ for some $a\in \bZ$. Consider the heart of a bounded t-structure $\cA = \cA_a^{G_k}$ on $\cC^E = \cC^{G_k}$ by \Cref{L:eventual t-structure}. By construction, $A\in \cA$ and $H^i(A) = 0$ for all $i\ne 0$.  Write $C = \Cone(A''\to A)$. We consider the long exact sequence $\cdots\to H^0(A'')\to H^0(A) \to H^0(C)\to \cdots$ associated to $A''\to A\to C$. 

    $C\in \cC_{\prec E}^E = \cC_{\prec G_k}^{G_k}$ and thus $H^i(C) \in \cC^{G_k}_{\prec G_k} \cap \cA$ for all $i$. As $\cC_{\prec G_k}^{G_k} \cap \cA$ is closed under quotients, being a Serre subcategory of $\cA$ by \Cref{L:eventual_heart_serre_subcategory}, it follows that for all $i\ne 0$, $H^i(A'') \in \cC_{\prec G_k}^{G_k}$. So, if $G_\ell$ is a limit HN factor of $A''$ with $G_\ell \sim E \sim G_k$, then $G_\ell$ must lie entirely in $\cA$. In particular, $G_k\in \cA$ and it follows that $a=0$. Therefore, $0 = \phi^{G_k}(G_k) \ge \phi^{G_k}(A)$. It also follows that $\phi^E(G_k) \ge \phi^E(A)$. Thus, $\phi^E(G_1)>\cdots > \phi^E(G_k) \ge \phi^E(A) > \phi^E(B)$. In particular, by the properties of the slicing $\cP^E$, $\Hom(A'',B) = 0$. 
\end{proof}

\begin{proof}[Proof of \Cref{T:prestabilityonquotient}]
We define pre-stability conditions on $\cC_{\preceq E}^E/\cC_{\prec E}^E$ and then transport them to $\cC_{\preceq E}/\cC_{\prec E}$ using \Cref{L:Jequiv}.  

$\forall A\in \lss$ such that $A\sim E$, define $Z_E(A)=\exp(\lim_{t\to\infty} \ell_t(A/E))$ and  $\phi_E(A) = \lim_{t\to\infty}\phi_t(A)-\phi_t(E)$, both of which exist by \Cref{D:exit_sequence}\ref{I:semistable_difference}. By \Cref{L:logZcomp}, $Z_E(A) = \lim_{t\to\infty} Z_t(A)/Z_t(E)$, and thus extends by additivity to an element of $\Hom(K_0(\cC_{\preceq E}^E),\bC)$. For all limit semistable $A\in \cC_{\prec E}^E$, $Z_E(A) = 0$ and so for all $X\in \cC_{\prec E}^E$, $Z_E(X) = 0$. There is an exact sequence $K_0(\cC_{\prec E}^E)\to K_0(\cC_{\preceq E}^E)\to K_0(\cC_{\preceq E}^E/\cC_{\prec E}^E)\to 0$ (see \cite[Thm. 5.1]{Kellerdgcat}) and therefore $Z_E$ descends to $\Hom(K_0(\cC_{\preceq E}^E/\cC_{\prec E}^E),\bC)$.

Note that $\lss(\phi_t(E) + \phi) \subset \cC_{\preceq E}^E$; define $\cP_E(\phi)$ to be its essential image in $\cC_{\preceq E}^E/\cC_{\prec E}^E$ for each $\phi$. For $A\sim E$, $\lim_{t\to\infty} \phi_t(A)-\phi_t(E)=\phi=\phi_E(A)$. One has
$Z_E(A) = \lvert Z_E(A)\rvert \exp(i\pi \phi_E(A))$, as needed. $\cC_{\preceq E}^E\to \cC_{\preceq E}^E/\cC_{\prec E}^E$ is exact, so $\phi^E(A[1]) = \phi^E(A) + 1$.

Any object in $\cC_{\preceq E}^E/\cC_{\prec E}^E$ admits a lift to some object $F \in \cC_{\preceq E}^E$. If one starts with a limit HN filtration of $F$ and deletes every step in the filtration $F_i$ such that $\Cone(F_{i-1} \to F_i) \in \cC_{\prec E}^E$, the resulting filtration projects to an HN filtration in $\cC_{\preceq E}^E/\cC_{\prec E}^E$ for the original object.\endnote{Let $F\in \cC_{\preceq E}^E$ be given with limit HN filtration $0 = F_0 \to F_1\to\cdots \to F_m = F$ and factors $\{G_i = \Cone(F_{i-1}\to F_i)\}$. Beginning with $i=1$, if $G_i\in \cC_{\prec E}^E$, then $F_{i-1}\to F_i$ is an isomorphism in $\cC_{\preceq E}^E/\cC_{\prec E}^E$. So, remove $E_i$ and use the triangle constructed using the composite morphism: $F_{i-1}\to F_{i+1}\to G_{i+1}$. Proceed until all such $F_i$ and $G_i$ are removed. Thus, up to reindexing we may assume $F_i \sim E$ for all $1\le i \le m$. Then, $\phi_t(G_1)>\cdots>\phi_t(G_m)$ $\forall t\gg0$ and $\phi_E(G_1)>\cdots>\phi_E(G_m)$, giving HN filtrations for $\lss_E$.}

Suppose given $A,B\in \lss$ with $A\sim B \sim E$ such that $\phi_E(A) > \phi_E(B)$. An element of $\Hom_{\cC_{\preceq E}^E/\cC_{\prec E}^E}(A,B)$ is represented by diagram $A\leftarrow A'\to B$
in $\cC_{\preceq E}^E$ with $\Cone(A'\to A)\in \cC_{\prec E}^E$, up to a natural equivalence relation (see \cite{NeemanTriangulated}*{Defn. 2.1.11}).\endnote{Let $\cC$ denote a triangulated category and $\cD$ a thick subcategory. Denote by $\cQ = \cC/\cD$ the Verdier quotient category. In loc. cit., morphisms in the Verdier quotient category are defined by ``roof'' diagrams as follows. $\Hom_{\cQ}(A,B)$ consists of equivalence classes of diagrams $A\xleftarrow{f} A' \to B$ where the arrows are morphisms in $\cC$ and $\Cone(f)\in \cD$. Two such diagrams $A\xleftarrow{f} A' \to B$ and $A\xleftarrow{g} A'' \to B$ are declared equivalent if there is a commutative diagram:
\[
\begin{tikzcd}[ampersand replacement=\&]
    \&A'\arrow[dl,"f",swap]\arrow[dr]\& \\
    A\& A'''\arrow[l,"h",swap]\arrow[r]\arrow[u]\arrow[d] \& B\\
    \& A''\arrow[ul,"g"]\arrow[ur] \&
\end{tikzcd}
\]
in $\cC$ with $\Cone(h) \in \cD$.} 
By \Cref{L:PhaseConvFactors}, there exists $A''\in \cC_{\preceq E}^E$ with a morphism $A''\to A$ such that $\Cone(A''\to A)\in \cC_{\prec E}^E$ and $\Hom_{\cC}(A'',B) = 0$. Hence $A \leftarrow A' \to B$ and $A\leftarrow A'' \to B$ are equivalent as morphisms in $\cC_{\preceq E}^E/\cC_{\prec E}^E$,\endnote{The diagram witnessing the equivalence is 
\[
\begin{tikzcd}[ampersand replacement=\&]
    \&A'\arrow[dl]\arrow[dr]\& \\
    A\& A''\arrow[l]\arrow[r]\arrow[u]\arrow[d,equal] \& B\\
    \& A''\arrow[ul]\arrow[ur] \&
\end{tikzcd}
\]
where the arrow $A''\to B$ is the composite $A''\to A'\to B$.} and the latter is equivalent to $0$. This implies that $\Hom_{\cC_{\preceq E}/\cC_{\prec E}}(A,B) = 0$. 

By \Cref{L:Jequiv}, the pre-stability condition $(Z_E,\cP_E)$ on $\cC_{\preceq E}^E/\cC_{\prec E}^E$ induces one on $\cC_{\preceq E}/\cC_{\prec E}$ also denoted $(Z_E,\cP_E)$ by abuse of notation. Any limit semistable $F\sim E$ is in the image of the inclusion $\cC_{\preceq E}^E \hookrightarrow \cC_{\preceq E}$ and so $Z_E(F) = \exp(\lim_{t\to\infty} \ell_t(F/E))$, whence (2) follows. By the diagram defining $J$, (1) follows also. 

Finally, if $E\sim E'$ then $\cC_{\preceq E} = \cC_{\preceq E'}$ and $\cC_{\prec E} = \cC_{\prec E'}$ so $\cC_{\preceq E}/\cC_{\prec E} = \cC_{\preceq E'}/\cC_{\prec E'}$. As $E\sim E'$, $\lim_{t\to\infty}\ell_t(E'/E) =: z_{E'/E}$ exists in $\bC$ and equals $\lim_{t\to\infty} \logZ_t(E') -\logZ_t(E)$ by \Cref{L:logZcomp}. Consider $z_{E'/E}\cdot \sigma_{E'} = (W,\cQ)$; one can check that $W = Z_E$. By definition, $\cP_E(\phi)$ consists of those $F\in \lss$ such that $\lim_t \phi_t(F) - \phi_t(E) = \phi$ and similarly for $\cP_{E'}(\phi)$. $\cQ(\phi) = \cP_{E'}(\phi - (\lim_{t\to\infty} \phi_t(E') - \phi_t(E)))$ and in particular consists of all $F \in \lss$ such that $\lim_{t}\phi_t(F) - \phi_t(E) = \phi$. I.e., $\cQ = \cP_E$ as claimed.
\end{proof}

\begin{rem}
\label{R:phantoms}
We conclude with a pair of remarks:
\begin{enumerate}
    \item We have introduced a pair of similar looking slicings, $\cP^E$ in \Cref{L:eventual t-structure}, and $\cP_E$ in \Cref{T:prestabilityonquotient}. Note that $\cP^E$ is defined on $\cC^E$, while $\cP_E$ is defined on $\cC_{\preceq E}^E/\cC_{\prec E}^E$ and thus on $\cC_{\preceq E}/\cC_{\prec E}$. Also, in the definition of $\cP^E$ we use $\phi_E(A) = \limsup_{t\to\infty} \phi_t(A) - \phi_t(E)$ since the limit of $\phi_t(A) - \phi_t(E)$ is not defined unless $A\sim E$. When $\sim$ and $\isim$ are equivalent relations, the natural functor $\cC^E\to \cC_{\preceq E}^E/\cC_{\prec E}^E$ is an equivalence identifying $\cP^E$ and $\cP_E$ so that $(Z_E,\cP_E)$ defines a pre-stability condition on $\cC^E$. \vspace{2mm}
    \item Since $Z_E(E) = 1$, $K_0(\cC_{\preceq E}/\cC_{\prec E})\otimes \bQ \ne 0$. In particular, the subquotient categories obtained from quasi-convergent paths are never phantom categories --- cf. \cite{GorchOrlov}.
\end{enumerate}
\end{rem}

\subsection{Numerical quasi-convergent paths}
\label{S:numerical}

Fix a quasi-convergent path $\sigma_\bullet$ in $\Stab_\Lambda(\cC)$. In this subsection we investigate conditions under which the central charge $Z_E : K_0(\cC_{\preceq E} / \cC_{\prec E}) \to \bC$ of the pre-stability condition from \Cref{T:prestabilityonquotient} factors through the subquotient group,
\[
\Lambda_E := v(\cC_{\preceq E})/ \{\alpha \in v(\cC_{\preceq E}) | m\alpha \in v(\cC_{\prec E}) \text{ for some } m \in \bZ\} 
\]
and when the resulting pre-stability condition has the support property.

\begin{defn}
\label{D:numerical} The quasi-convergent path $\sigma_\bullet$ is called \emph{numerical} if for any $E_1,\ldots,E_n \in \lss$ that are pairwise non-equivalent with respect to $\isim$, the subgroups $v(\cC^{E_i}) \subset \Lambda$ are linearly independent over $\bQ$.
\end{defn}

\begin{lem}
\label{L:linearalgebra}
    If $\sigma_\bullet$ is numerical, then
    \begin{enumerate}
        \item $\Lambda = \bigoplus_{F \in \lss/{\isim}} v(\cC^F)$ and $\lss/{\isim}$ is finite.
    \end{enumerate}
    For all $E,F \in \lss$
    \begin{enumerate}
        \item[(2)]  $v(\cC_{\preceq E}^F) = v(\cC_{\preceq E}) \cap v(\cC^F)$ and $v(\cC_{\prec E}^F) = v(\cC_{\prec E}) \cap v(\cC^F)$; and \vspace{2mm}
        \item[(3)] if $E \isim F$, the natural map induces an isomorphism 
        \[
        v(\cC_{\preceq E}) \cap v(\cC^F) / v(\cC_{\prec E}) \cap v(\cC^F) \cong v(\cC_{\preceq E})/v(\cC_{\prec E}).
        \]
    \end{enumerate}
\end{lem}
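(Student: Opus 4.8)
The plan is to establish the three assertions in order; part~(1) carries all the conceptual weight, after which (2) and~(3) become bookkeeping with the graded pieces of the decomposition $\Lambda = \bigoplus_{F\in\lss/{\isim}} v(\cC^F)$. I will repeatedly use the SOD $\cC = \langle\cC^G : G\in\lss/{\isim}\rangle$ of \Cref{P:ImSOD}, the identifications $\cC_{\preceq E} = \langle\lss(\iprec E),\cC^E_{\preceq E}\rangle$ and $\cC_{\prec E} = \langle\lss(\iprec E),\cC^E_{\prec E}\rangle$ of \Cref{L:LePreciExtension} together with $\lss(\iprec E) = \langle\cC^G : G\iprec E\rangle$ (also read off from \Cref{P:ImSOD}), and the elementary fact that for an SOD $\langle\cD_1,\dots,\cD_m\rangle$ one has $v(\langle\cD_1,\dots,\cD_m\rangle) = \sum_i v(\cD_i)$, since every object of the hull has a finite filtration with subquotients among the $\cD_i$.

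For part~(1): surjectivity of $v$ together with the SOD of \Cref{P:ImSOD} gives $\Lambda = \sum_{G\in\lss/{\isim}} v(\cC^G)$. Next I would check that each summand is nonzero. A class $G$ is represented by a nonzero limit semistable object $G_0$, and since $\phi_t^+(G_0)-\phi_t^-(G_0)\to 0$, \Cref{L:logZcomp} gives $Z_t(G_0) = e^{\logZ_{\sigma_t}(G_0)}\ne 0$ for $t\gg 0$; because the central charge of $\sigma_t$ factors through $v$, this forces $v(G_0)\ne 0$, hence $v(\cC^G)\ne 0$. By \Cref{D:numerical} the subgroups $v(\cC^G)$ are $\bQ$-linearly independent, so the nonzero subspaces $v(\cC^G)\otimes\bQ$ are independent inside the finite-dimensional space $\Lambda\otimes\bQ$; this already forces $\lss/{\isim}$ to be finite. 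Finally, $\bQ$-linear independence of the $v(\cC^G)$ together with torsion-freeness of $\Lambda$ upgrades $\Lambda = \sum_G v(\cC^G)$ to an internal direct sum: a relation $\sum_G a_G = 0$ with $a_G\in v(\cC^G)$ becomes termwise zero after $\otimes\bQ$, so each $a_G$ is torsion and hence zero.

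For part~(2): applying $v$ to the SODs of \Cref{L:LePreciExtension} and using part~(1) gives
\[
v(\cC_{\preceq E}) = \Big(\bigoplus_{G\iprec E} v(\cC^G)\Big)\oplus v(\cC^E_{\preceq E}), \qquad v(\cC_{\prec E}) = \Big(\bigoplus_{G\iprec E} v(\cC^G)\Big)\oplus v(\cC^E_{\prec E}),
\]
with $v(\cC^E_{\preceq E}),v(\cC^E_{\prec E})\subseteq v(\cC^E)$ and all sums direct. Fixing $F\in\lss$, I would split into three cases according to the position of the $\isim$-class of $F$ relative to that of $E$. If $F\iprec E$, then every limit HN factor $A$ of an object of $\cC^F$ satisfies $A\isim F$, whence $A\iprec E$ and so $A\prec E$; thus $\cC^F = \cC^F_{\preceq E} = \cC^F_{\prec E}$ and all the groups in question equal $v(\cC^F)$, which is one of the summands above. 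If $F\isim E$, then $\cC^F = \cC^E$ and hence $\cC^F_{\preceq E} = \cC^E_{\preceq E}$, $\cC^F_{\prec E} = \cC^E_{\prec E}$; intersecting the displayed decompositions with the summand $v(\cC^F) = v(\cC^E)$ and using the direct sum of part~(1) picks out exactly $v(\cC^E_{\preceq E})$, resp.\ $v(\cC^E_{\prec E})$. If $E\iprec F$, then every limit HN factor $A$ of an object of $\cC^F$ satisfies $E\iprec A$, whence $E\prec A$ and $A\not\preceq E$, so $\cC^F_{\preceq E}$ and $\cC^F_{\prec E}$ contain only zero objects; on the other hand $v(\cC^F) = v(\cC^G)$ for a class $G$ strictly above $E$, and $v(\cC_{\preceq E})\subseteq\bigoplus_{G'\ipreceq E} v(\cC^{G'})$ meets this trivially. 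In every case $v(\cC^F_{\preceq E}) = v(\cC_{\preceq E})\cap v(\cC^F)$, and likewise with $\prec$.

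For part~(3) and the main difficulty: with $E\isim F$, part~(2) identifies the left-hand side of~(3) with $v(\cC^E_{\preceq E})/v(\cC^E_{\prec E})$, and cancelling the common direct summand $\bigoplus_{G\iprec E} v(\cC^G)$ in the two displayed decompositions identifies $v(\cC_{\preceq E})/v(\cC_{\prec E})$ with the same quotient; unwinding the definitions shows the map induced by $v(\cC_{\preceq E})\cap v(\cC^F) = v(\cC^E_{\preceq E})\hookrightarrow v(\cC_{\preceq E})$ is exactly this cancellation isomorphism. The only genuinely non-formal step is the finiteness in part~(1): the SOD of \Cref{P:ImSOD} is a priori indexed by an infinite totally ordered set, and it is precisely numericality — together with the observation that no $v(\cC^G)$ vanishes — that collapses it to a finite, $v$-graded splitting of $\Lambda$. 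I expect the fiddliest (but routine) part to be the phase-divergence implications underlying the case analysis in~(2), which rest on transitivity of the preorders (\Cref{L:orderproperties}) and the trichotomy of \Cref{L: preorder_possibilities}.
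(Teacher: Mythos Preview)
Your proof is correct and follows essentially the same approach as the paper: use the SOD $\cC = \langle \cC^G : G \in \lss/{\isim}\rangle$ together with the numerical hypothesis to get the direct sum in~(1), and then use the refinement $\cC_{\preceq E} = \langle \lss(\iprec E), \cC^E_{\preceq E}\rangle$ from \Cref{L:LePreciExtension} for~(2) and~(3). Your version is in fact more careful than the paper's in two places---you explicitly verify $v(\cC^G)\neq 0$ (needed to deduce finiteness from $\bQ$-independence) and you spell out the case analysis in~(2) where the paper's one-line ``$v(G)\in v(\cC^F)$ implies $G\in \cC^F$'' is not literally true as stated---so nothing needs to be changed.
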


\begin{proof}
By \Cref{D:numerical}, $\# (\lss/{\isim}) \le \dim_{\bQ} \Lambda_{\bQ} <\infty$. Therefore, the semiorthogonal decomposition of \Cref{P:ImSOD} is finite. The decomposition $K_0(\cC) = \bigoplus_{F\in \lss/{\isim}}K_0(\cC^F)$ combined with surj\-ectivity of $v: K_0(\cC)\twoheadrightarrow \Lambda$ implies that the subgroups $\{v(\cC^F)\}_{F \in \lss/{\isim}}$ generate $\Lambda$. Linear indep\-endence is by \Cref{D:numerical} and (1) follows.

$v(\cC_{\preceq E}^F) \subseteq v(\cC_{\preceq E})\cap v(\cC^F)$ is automatic. Given $x\in v(\cC_{\preceq E}^F)$, write $x = v(G)$ for $G\in \cC_{\preceq E}$. By (1), $v(G) \in v(\cC^F)$ implies $G\in \cC^F$. So, $v(\cC_{\preceq E}^F) = v(\cC_{\preceq E})\cap v(\cC^F)$. $v(\cC_{\prec E}^F) = v(\cC_{\prec E}) \cap v(\cC^F)$ is analogous. 

For (3), $v(\cC_{\preceq E})$ and $v(\cC_{\prec E})$ contain $v(\lss(\iprec F))$, so one has $v(\cC_{\preceq E}) = v(\lss(\iprec F))\oplus (v(\cC_{\preceq E})\cap v(\cC^F))$ and $v(\cC_{\prec E}) = v(\lss(\iprec F)) \oplus (v(\cC_{\prec E})\cap v(\cC^F))$. The claim follows.
\end{proof}

\begin{defn}
\label{D:supportpropertypath}
    A numerical quasi-convergent path $\sigma_\bullet$ in $\Stab_\Lambda(\cC)$ satisfies the \emph{support property} if 
    $\forall E\in \lss$ and for some (equivalently any) norm $\lVert \:\cdot\:\rVert_E$ on $\Lambda_E \otimes \bR$, $\exists \epsilon_E > 0$ such that $\forall F\in \lss$ with $F\sim E$
\begin{equation*}
    \lim_{t \to \infty} \frac{\lvert Z_t(F)\rvert}{\lvert Z_t(E)\rvert} \ge \epsilon_E \lVert v(F)\rVert_E.
\end{equation*}
 
\end{defn}

\begin{thm}\label{T:stabilityonquotient}
Suppose $\sigma_\bullet$ is a numerical quasi-convergent path in $\Stab_\Lambda(\cC)$.

\begin{enumerate}
    \item The central charge $Z_E$ from \Cref{T:prestabilityonquotient} factors through $\Lambda_E$ for all $E\in \lss$. In particular, $\Lambda_E \ne 0$ for all $E\in \lss$. \vspace{2mm}

    \item $\sigma_\bullet$ satisfies the support property if and only if $\forall E\in \lss,$ the pre-stability condition $\sigma_E$ from \Cref{T:prestabilityonquotient} satisfies the support property with respect to $\Lambda_E$.
\end{enumerate}
\end{thm}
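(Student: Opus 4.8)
The plan is to leverage the numerical hypothesis to reduce everything to the level of the associated-graded subquotients $\cC^E$, where \Cref{L:linearalgebra} gives a clean direct-sum decomposition of $\Lambda$. First, for part (1), I would recall from \Cref{T:prestabilityonquotient} that $Z_E$ is defined on $K_0(\cC_{\preceq E}/\cC_{\prec E})$ and, via \Cref{L:Jequiv}, is computed on the equivalent category $\cC^E_{\preceq E}/\cC^E_{\prec E}$. The existing construction already shows $Z_E$ kills $K_0(\cC^E_{\prec E})$, so the content is that it kills the torsion of $v(\cC_{\preceq E})/v(\cC_{\prec E})$ — equivalently that it factors through $\Lambda_E$ rather than merely through $v(\cC_{\preceq E})/v(\cC_{\prec E})$. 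This is automatic once we know $Z_E$ lands in $\bC$, which is torsion-free: a homomorphism from a finitely generated abelian group to $\bC$ always kills torsion, so $Z_E$ factors through the torsion-free quotient $\Lambda_E$. The statement ``$\Lambda_E \neq 0$'' then follows from $Z_E(E)=1$ (as noted in \Cref{R:phantoms}), so $v(E)$ has nonzero image in $\Lambda_E$; alternatively one invokes \Cref{L:linearalgebra}(3) to identify $\Lambda_E$ with the torsion-free quotient of $v(\cC^E_{\preceq E})/v(\cC^E_{\prec E})$, which is nonzero.

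For part (2), I would argue both implications by comparing the two support-property inequalities term by term. For the direction ``$\sigma_\bullet$ has the support property $\Rightarrow$ each $\sigma_E$ does'': fix $E$ and let $F \sim E$; then $F$ is represented by an object of $\cC^E_{\preceq E}$, its class in $\Lambda_E$ is the image of $v(F)$, and \Cref{T:prestabilityonquotient}(2) gives $|Z_E(F)| = \lim_{t\to\infty} |Z_t(F)|/|Z_t(E)|$. The path support property \Cref{D:supportpropertypath} says exactly $\lim_t |Z_t(F)|/|Z_t(E)| \ge \epsilon_E \|v(F)\|_E$, which — since every semistable object of $\sigma_E$ arises this way up to the $\bC$-action, and masses of HN factors add while norms satisfy the triangle inequality — is the support property for $\sigma_E$ on the generating semistable objects, hence for $\sigma_E$. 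Conversely, if each $\sigma_E$ has the support property with constant $C_E$ and norm $\|\cdot\|_E$, then for $F\sim E$ limit semistable we get $|Z_E(F)| \ge C_E \|v(F)\|_E$, and unwinding $|Z_E(F)| = \lim_t |Z_t(F)|/|Z_t(E)|$ recovers Definition \ref{D:supportpropertypath} with $\epsilon_E = C_E$. The key compatibility making this work is \Cref{L:linearalgebra}(2)–(3): the norm on $\Lambda_E$ pulls back correctly from $v(\cC^E)$, and the identification of $\Lambda_E$ with a subquotient living inside $v(\cC^E)$ is what lets us transfer the bound between the two settings without losing track of the class $v(F)$.

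The main obstacle I anticipate is not either individual implication but the bookkeeping around passing from ``semistable objects of $\sigma_E$'' to ``limit semistable $F \sim E$'': one must check that every $\sigma_E$-semistable object lifts to an honest limit semistable object of $\cC^E_{\preceq E}$ with the expected central charge and phase (which is essentially Theorem \ref{T:prestabilityonquotient}(1) together with the lifting discussion in its proof), and that the support inequality need only be verified on these, since it then propagates to arbitrary nonzero objects via HN filtrations. A subtler point is ensuring the \emph{uniform} constant: \Cref{D:supportpropertypath} demands a single $\epsilon_E$ for all $F\sim E$, matching the single constant $C$ in \Cref{D:supportproperty}, so the equivalence of norms on the finite-rank space $\Lambda_E\otimes\bR$ (finite rank by \Cref{L:linearalgebra}) must be invoked to make the ``some/any norm'' clauses line up. Once these are in place the proof is a direct dictionary translation.
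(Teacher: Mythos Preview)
Your proposal is correct and follows essentially the same route as the paper: reduce to the $\cC^E$-side via \Cref{L:Jequiv}, use \Cref{L:linearalgebra} to identify $v(\cC^E_{\preceq E})/v(\cC^E_{\prec E}) \cong v(\cC_{\preceq E})/v(\cC_{\prec E})$, and then for (2) simply unwind $|Z_E(F)| = \lim_t |Z_t(F)|/|Z_t(E)|$ against \Cref{D:supportpropertypath}. One minor correction of emphasis: in (1) the genuinely nontrivial step is not killing torsion (which is automatic since $\bC$ is torsion-free) but getting $Z_E$ to factor through $v(\cC_{\preceq E})/v(\cC_{\prec E})$ in the first place---this is exactly where numericity enters via \Cref{L:linearalgebra}(2)--(3), since on the $\cC_{\preceq E}$-side $Z_E$ is \emph{not} given by the formula $\lim Z_t(-)/Z_t(E)$ (that limit need not exist on $\lss(\iprec E)$) but only by transport through $J$.
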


\begin{proof}
    For (1), $\sigma_E$ on $\cC_{\preceq E}/\cC_{\prec E}$ is constructed by defining a pre-stability condition on $\cC_{\preceq E}^E/\cC_{\prec E}^E$ and then using $J: \cC_{\preceq E}/\cC_{\prec E}\xrightarrow{\sim} \cC_{\preceq E}^E/\cC_{\prec E}^E$ of \Cref{L:Jequiv} to transport it. $Z_E$ factors through $v(\cC_{\preceq E})/v(\cC_{\prec E})$ by commutativity of
    \[
    \begin{tikzcd}
        K_0(\cC_{\preceq E}/\cC_{\prec E})\arrow[r,"\overline{v}"] & v(\cC_{\preceq E})/v(\cC_{\prec E})\arrow[dr,dashed,"Z_E"]&\\
        K_0(\cC_{\preceq E}^E/\cC_{\prec E}^E)\arrow[r,"\overline{v}"] \arrow[u, "\sim" {anchor=south, rotate=90}] & v(\cC_{\preceq E}^E)/v(\cC_{\prec E}^E)\arrow[r,"Z_E\:\:\:\:"] \arrow[u, "\sim" {anchor=north, rotate=90}] &\bC.
    \end{tikzcd}
    \]
    Note the double usage of $Z_E$. The second vertical isomorphism is by parts (2) and (3) of \Cref{L:linearalgebra}. In both cases, $\overline{v}$ denotes $v$ composed with the quotient map. Finally, factorization of $Z_E$ through the torsion free part of $v(\cC_{\preceq E})/v(\cC_{\prec E})$, $\Lambda_E$, is immediate from the fact that $Z_E$ is valued in $\bC$.

    For (2), fix a norm $\lVert \:\cdot\:\rVert$ on $\Lambda_E$ for each $E$. $\sigma_E$ has the support property for $\Lambda_E$ if and only if there exists $\epsilon_E>0$ such that for all $F\in\lss$ with $E\sim F$ one has $\lvert Z_E(F)\rvert/\lVert v(F)\rVert \ge \epsilon_E$ which is equivalent to \Cref{D:supportpropertypath}. 
\end{proof}

\begin{cor}
    If $\sigma_\bullet$ is numerical then $\#(\lss/{\sim})\le \dim \Lambda_{\bQ} <\infty$.
\end{cor}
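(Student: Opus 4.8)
The plan is to exhibit a strictly increasing map from the totally ordered set $\lss/{\sim}$ into a finite set. By \Cref{L:orderproperties}, $\preceq$ induces a total order on $\lss/{\sim}$, and since $E \sim E'$ forces $\cC_{\preceq E} = \cC_{\preceq E'}$, the assignment $[E] \mapsto v(\cC_{\preceq E})_{\bQ} \subseteq \Lambda_{\bQ}$ is well defined on $\lss/{\sim}$. I will show this assignment is strictly increasing, and takes values among the nonzero subspaces of $\Lambda_{\bQ}$, so that $[E] \mapsto \dim_{\bQ} v(\cC_{\preceq E})_{\bQ}$ lands in $\{1, 2, \dots, \dim_{\bQ}\Lambda_{\bQ}\}$; strict monotonicity then gives injectivity and hence the bound.

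First I would record the key containment $\cC_{\preceq E} \subseteq \cC_{\prec F}$ whenever $E \prec F$ in $\lss$. Indeed, any $X \in \cC_{\preceq E}$ has all of its limit HN factors $A$ satisfying $A \preceq E$; since $\preceq$ is transitive and $E \prec F$, each such $A$ satisfies $A \prec F$, so $X \in \cC_{\prec F}$. Combined with the tautological $\cC_{\prec F} \subseteq \cC_{\preceq F}$, applying $v$ yields $v(\cC_{\preceq E})_{\bQ} \subseteq v(\cC_{\prec F})_{\bQ} \subseteq v(\cC_{\preceq F})_{\bQ}$. To make the last inclusion strict I would invoke numericality through \Cref{T:stabilityonquotient}(1): since $\sigma_\bullet$ is numerical, $\Lambda_F = \bigl(v(\cC_{\preceq F})/v(\cC_{\prec F})\bigr)_{\mathrm{tf}} \neq 0$. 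As the saturation of $v(\cC_{\prec F})$ inside $v(\cC_{\preceq F})$ has the same rationalization as $v(\cC_{\prec F})$, we get $(\Lambda_F)_{\bQ} \cong v(\cC_{\preceq F})_{\bQ}/v(\cC_{\prec F})_{\bQ} \neq 0$, hence $v(\cC_{\prec F})_{\bQ} \subsetneq v(\cC_{\preceq F})_{\bQ}$. Therefore $v(\cC_{\preceq E})_{\bQ} \subsetneq v(\cC_{\preceq F})_{\bQ}$ for $E \prec F$.

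Next I would note that $v(\cC_{\preceq E})_{\bQ} \neq 0$ for every $E \in \lss$: the non-vanishing $\Lambda_E \neq 0$ (again \Cref{T:stabilityonquotient}(1)) forces $v(\cC_{\preceq E}) \neq v(\cC_{\prec E})$, and since $v(\cC_{\prec E}) \subseteq v(\cC_{\preceq E})$, a vanishing $v(\cC_{\preceq E})$ would force $v(\cC_{\prec E}) = 0$ as well, a contradiction. Consequently $[E] \mapsto \dim_{\bQ} v(\cC_{\preceq E})_{\bQ}$ is a strictly increasing function $\lss/{\sim} \to \{1, \dots, \dim_{\bQ}\Lambda_{\bQ}\}$, in particular injective, so $\#(\lss/{\sim}) \le \dim_{\bQ}\Lambda_{\bQ}$, which is finite because $\Lambda$ has finite rank.

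The only genuine subtlety is the strictness of the chain $v(\cC_{\preceq E})_{\bQ} \subsetneq v(\cC_{\preceq F})_{\bQ}$; the rest is bookkeeping with the total order on $\lss/{\sim}$. This strictness — and with it the sharp bound rather than $\dim_{\bQ}\Lambda_{\bQ}+1$ — is precisely where numericality is used, via the non-vanishing of $\Lambda_F$ in \Cref{T:stabilityonquotient}(1). As an alternative organization, one could instead partition $\lss/{\sim}$ over $\lss/{\isim}$, which is finite by \Cref{L:linearalgebra}(1), and run the same dimension count for the filtration $\{\cC^F_{\preceq E}\}$ inside each $v(\cC^F)_{\bQ}$; summing the resulting bounds over $F \in \lss/{\isim}$ gives the same estimate since $\Lambda_{\bQ} = \bigoplus_{F \in \lss/{\isim}} v(\cC^F)_{\bQ}$.
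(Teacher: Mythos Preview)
Your argument is correct. The paper's proof takes precisely the alternative route you sketch at the end: it first invokes \Cref{L:linearalgebra}(1) to get $\lss/{\isim}$ finite, partitions $\lss/{\sim}$ over $\lss/{\isim}$, and within each $\isim$-class $[F]$ bounds $\#([F]/{\sim})$ by $\sum_{E\in[F]/{\sim}}\dim(\Lambda_E)_\bQ = \dim v(\cC^F)_\bQ$, then sums over $F$. Your main argument is more direct: by exhibiting the strictly increasing chain $[E]\mapsto v(\cC_{\preceq E})_\bQ$ and reading off dimensions, you bypass the detour through $\isim$-classes and \Cref{L:linearalgebra} entirely, needing only the non-vanishing $\Lambda_F\neq 0$ from \Cref{T:stabilityonquotient}(1). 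Both approaches ultimately rest on the same telescoping of ranks, but yours packages it as a single injection into $\{1,\dots,\dim\Lambda_\bQ\}$, which is arguably cleaner; the paper's decomposition has the minor expository advantage of making explicit how the bound distributes across the semiorthogonal pieces $\cC^F$.
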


\begin{proof}
    By \Cref{L:linearalgebra}, $\lss/{\isim}$ is finite. Given $[F]\in \lss/{\isim}$, $\sim$ induces an equivalence relation on $[F]$, regarded as a subset of $\lss$ and $\lss/{\sim} = \bigsqcup_{F\in \lss/{\isim}} [F]/{\sim}$. On the other hand, because $\dim(\Lambda_E)_\bQ \geq 1$ for all $E$, $\#([F]/{\sim}) \le \sum_{E\in [F]/{\sim}} \dim (\Lambda_E)_{\bQ}  = \dim v(\cC^F)_{\bQ}$ by \Cref{T:stabilityonquotient}. So, $\#(\lss/{\sim}) \le \sum_{E\in \lss/{\sim}} \dim (\Lambda_E)_{\bQ} = \dim \Lambda_{\bQ}$. 
\end{proof}

The remainder of the section is devoted to showing that many paths considered in practice are numerical.

\begin{prop}
\label{P:numericalforK}
    Let $\cC$ denote a triangulated category with $0 <\rank K_0(\cC)<\infty$. Let $\Stab(\cC)$ denote the space of stability conditions satisfying the support property with respect to $K_0(\cC)\to K_0(\cC)_{\rm{tf}}$. Every quasi-convergent path in $\Stab(\cC)$ is numerical.
\end{prop}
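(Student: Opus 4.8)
The plan is to deduce this from the semiorthogonal decomposition $\cC = \langle \cC^E : E \in \lss/{\isim}\rangle$ of \Cref{P:ImSOD}, together with the fact that $K_0$ splits along a semiorthogonal decomposition; the only extra care needed is that here $v$ is the quotient map $K_0(\cC)\twoheadrightarrow \Lambda = K_0(\cC)_{\mathrm{tf}}$, so one must control torsion. Concretely, fix $E_1,\dots,E_n\in\lss$ that are pairwise non-$\isim$-equivalent; by \Cref{L:orderproperties} the relation $\ipreceq$ is a total order on $\lss/{\isim}$, so after relabeling we may assume $E_1\iprec\cdots\iprec E_n$, and we must show that the subgroups $v(\cC^{E_i})\subseteq\Lambda$ are $\bQ$-linearly independent, i.e.\ that the $\bQ$-subspaces $v(\cC^{E_i})\otimes\bQ\subseteq\Lambda\otimes\bQ$ are in direct sum.

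First I would coarsen the SOD of \Cref{P:ImSOD} to a \emph{finite} one in which the $\cC^{E_i}$ appear as factors. For any down-closed subset $D\subseteq\lss/{\isim}$, the subcategory $[\cC^F : F\in D]$ is a left-hand factor of a semiorthogonal decomposition $\cC = \langle [\cC^F : F\in D],\, [\cC^F : F\notin D]\rangle$: semiorthogonality is \Cref{L:no_homs_right_to_left} (in a total order, $D$ down-closed forces every class outside $D$ to be $\isucc$ every class in $D$), and generation follows by partitioning a limit HN filtration into the factors whose class lies in $D$ and those whose class does not — by monotonicity of phases these form complementary terminal/initial segments — and then coarsening via the concatenation property (\Cref{P:tstab}); this is the same argument as in \Cref{C:Prec I pre-triangulated}. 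Iterating this with the down-sets cut out by $E_1\iprec\cdots\iprec E_n$ yields a finite semiorthogonal decomposition
\[
\cC = \langle\, \cD_0,\ \cC^{E_1},\ \cD_1,\ \cC^{E_2},\ \dots,\ \cC^{E_n},\ \cD_n \,\rangle,
\]
with $\cD_0 = [\cC^F : F\iprec E_1]$, $\cD_k = [\cC^F : E_k\iprec F\iprec E_{k+1}]$ for $1\le k<n$, and $\cD_n = [\cC^F : F\isucc E_n]$ (any of which may be $0$). Since for a finite semiorthogonal decomposition the inclusions induce an isomorphism of $K_0$ with the direct sum of the $K_0$'s of the factors \cite{B-KSerre}, each map $\iota_i\colon K_0(\cC^{E_i})\to K_0(\cC)$ is a split injection, with a retraction $r_i\colon K_0(\cC)\to K_0(\cC^{E_i})$ annihilating $K_0(\cC^{E_{i'}})$ for all $i'\ne i$.

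Now I would run a clearing-of-denominators argument. Suppose $\sum_{i=1}^n v_i = 0$ in $\Lambda\otimes\bQ$ with $v_i\in v(\cC^{E_i})\otimes\bQ$. Choose $N\ge1$ with $Nv_i\in v(\cC^{E_i})$, say $Nv_i = v\big(\iota_i(y_i)\big)$ with $y_i\in K_0(\cC^{E_i})$. Then $v\big(\sum_i\iota_i(y_i)\big) = N\sum_i v_i = 0$, and since $\ker\big(v\colon K_0(\cC)\twoheadrightarrow\Lambda=K_0(\cC)_{\mathrm{tf}}\big)$ is exactly the torsion subgroup of $K_0(\cC)$, the element $\sum_i\iota_i(y_i)$ is torsion; pick $M\ge1$ with $\sum_i M\,\iota_i(y_i) = 0$ in $K_0(\cC)$. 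Applying $r_j$ and using $r_j\iota_i = \delta_{ij}\,\mathrm{id}$ gives $My_j = 0$ in $K_0(\cC^{E_j})$, hence $MNv_j = M\,v(\iota_j(y_j)) = v(\iota_j(My_j)) = 0$ in $\Lambda$, so $v_j = 0$ in $\Lambda\otimes\bQ$. As $j$ was arbitrary, the $v(\cC^{E_i})$ are $\bQ$-linearly independent, which is precisely the condition of \Cref{D:numerical}; hence $\sigma_\bullet$ is numerical.

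The only genuine work is the bookkeeping in the second paragraph — verifying that down-sets of $\lss/{\isim}$ cut out factors of semiorthogonal decompositions of $\cC$, so that one obtains the retractions $r_i$ — but this is immediate from the machinery already assembled (\Cref{P:ImSOD}, \Cref{L:no_homs_right_to_left}, \Cref{P:tstab}, \Cref{C:Prec I pre-triangulated}), and no new idea is required. Note that the hypothesis $\rank K_0(\cC)>0$ plays no role beyond ensuring that $\Stab(\cC)$ and $\Lambda$ are nonzero; what the argument uses is $\rank K_0(\cC)<\infty$, guaranteeing that $\Lambda = K_0(\cC)_{\mathrm{tf}}$ is a free Abelian group of finite rank, together with the identification of $\ker v$ with the torsion subgroup of $K_0(\cC)$.
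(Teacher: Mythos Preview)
Your proof is correct and follows essentially the same approach as the paper --- namely, additivity of $K_0$ along semiorthogonal decompositions --- with the paper's proof being the one-line observation that $K_0(\cC) = \bigoplus_i K_0(\cC_i)$ for any SOD $\cC = \langle \cC_1,\ldots,\cC_n\rangle$. Your version expands on two points the paper leaves implicit: coarsening the (a priori possibly infinite) SOD of \Cref{P:ImSOD} to a finite one containing the given $\cC^{E_i}$, and handling the passage from $K_0(\cC)$ to $\Lambda = K_0(\cC)_{\rm tf}$; note that the latter can be shortened by observing that $v\otimes\bQ\colon K_0(\cC)\otimes\bQ \to \Lambda\otimes\bQ$ is an isomorphism, so the direct sum decomposition of $K_0(\cC)\otimes\bQ$ immediately yields $\bQ$-linear independence of the $v(\cC^{E_i})$ without the clearing-of-denominators and retraction argument.
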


\begin{proof}
    Numericity follows from additivity: given a semiorthogonal decomposition $\cC = \langle \cC_1,\ldots, \cC_n\rangle$, one has $K_0(\cC) = \bigoplus_{i=1}^n K_0(\cC_i)$.
\end{proof}

\begin{ex}
    Let $A$ be a finite dimensional algebra over a field $k$ and $\cD = \mathrm{D}^{\mathrm{b}}(\rm{mod}\:A)$ its bounded derived category of finite dimensional modules. $K_0(\cD)$ is free of finite rank on the classes of the simple finite dimensional $A$-modules. Bridgeland observed that $\Stab(\cD)$ is always nonempty \cite[Ex 5.5]{Br07}. \Cref{P:numericalforK} implies that every quasi-convergent path in $\Stab(\cD)$ is numerical.
\end{ex}

For a dg-category $\cD$ over $\bC$, Blanc constructs its topological K-theory spectrum $\bf{K}^{\rm{top}}(\cD)$ \cite{BlancTopK} along with a canonical morphism of spectra $\bf{K}(\cD)\to \bf{K}^{\rm{top}}(\cD)$, where $\bf{K}(\cD)$ denotes the algebraic K-theory spectrum of \cite{Schlichting06}.
\begin{prop} \label{P:top_numerical}
If $\Lambda := \im(K_0(\cD) \to K_0^{\rm{top}}(\cD))_{\rm{tf}}$ has finite rank, then any quasi-convergent path in $\Stab_\Lambda(\cD)$ is numerical.
\end{prop}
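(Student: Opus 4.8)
The plan is to run the argument of \Cref{P:numericalforK} one categorical level higher: there numericity was immediate from the additivity of $K_0$ along semiorthogonal decompositions, and here the point will be that the \emph{image} of $K_0$ inside $K_0^{\rm{top}}$ is also additive along such decompositions. Fix a quasi-convergent path $\sigma_\bullet$ in $\Stab_\Lambda(\cD)$. By \Cref{P:ImSOD} applied to $\rm{Ho}(\cD)$ we have a semiorthogonal decomposition into thick pre-triangulated subcategories $\cC^E$, $E\in\lss/{\isim}$, each of which I regard as the full dg-subcategory $\cD^E\subseteq\cD$ on its objects. Since every object of $\cD$ has a finite limit HN filtration, $\cD$ is the filtered colimit of its thick dg-subcategories $\cD_S$ generated by finitely many of the $\cD^E$ (over finite $S\subseteq\lss/{\isim}$), and each $\cD_S$ carries the finite semiorthogonal decomposition $\langle\cD^E:E\in S\rangle$.

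The one nonformal step is to invoke Blanc's results \cite{BlancTopK}: $\bf{K}^{\rm{top}}(-)$ is a localizing invariant of dg-categories that commutes with filtered colimits, and the comparison map $\bf{K}(-)\to\bf{K}^{\rm{top}}(-)$ is a natural transformation between such invariants (algebraic $K$-theory \cite{Schlichting06} being one as well). Consequently $\bf{K}^{\rm{top}}$ sends each finite semiorthogonal decomposition to a direct sum and, being finitary, sends the colimit presentation of $\cD$ to $\bigoplus_{E\in\lss/{\isim}}\bf{K}^{\rm{top}}(\cD^E)$ --- all compatibly with the comparison map and with the identical statements for $\bf{K}$. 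Passing to $\pi_0$, this produces a commutative square of abelian groups
\[
\begin{tikzcd}
K_0(\cD) \arrow[r,"\sim"]\arrow[d] & \bigoplus_{E\in\lss/{\isim}} K_0(\cD^E)\arrow[d]\\
K_0^{\rm{top}}(\cD) \arrow[r,"\sim"] & \bigoplus_{E\in\lss/{\isim}} K_0^{\rm{top}}(\cD^E)
\end{tikzcd}
\]
whose horizontal maps are isomorphisms and whose vertical maps are the direct sums of the comparison maps $K_0(\cD^E)\to K_0^{\rm{top}}(\cD^E)$.

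The remainder is formal. Put $M:=\im(K_0(\cD)\to K_0^{\rm{top}}(\cD))$ and $M_E:=\im(K_0(\cD^E)\to K_0^{\rm{top}}(\cD^E))$. The square identifies $M$ with the internal direct sum $\bigoplus_E M_E$ inside $\bigoplus_E K_0^{\rm{top}}(\cD^E)=K_0^{\rm{top}}(\cD)$. Since the torsion subgroup of a direct sum is the direct sum of the torsion subgroups, $\Lambda=M_{\rm{tf}}$ is the internal direct sum $\bigoplus_E (M_E)_{\rm{tf}}$, and its $E$-th summand $(M_E)_{\rm{tf}}$ is exactly $v(\cC^E)$. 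Hence for any pairwise $\isim$-inequivalent $E_1,\dots,E_n\in\lss$ the subgroups $v(\cC^{E_1}),\dots,v(\cC^{E_n})$ lie in distinct summands of $\Lambda$, so they are $\bZ$-linearly independent and \emph{a fortiori} $\bQ$-linearly independent; this is the definition of numericity. One even recovers \Cref{L:linearalgebra}(1) directly, including finiteness of $\lss/{\isim}$ (forced by $\rk\Lambda<\infty$).

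The main obstacle is concentrated in the second paragraph: it requires the precise form of Blanc's theorems --- that topological $K$-theory of dg-categories is a localizing invariant commuting with filtered colimits and that $\bf{K}\to\bf{K}^{\rm{top}}$ is a natural transformation --- which together make ``$K_0^{\rm{top}}$ respects the semiorthogonal decomposition'' true and compatible with the elementary corresponding fact for $K_0$. Everything downstream of that is the additivity bookkeeping already used for \Cref{P:numericalforK}, plus the observation that torsion-free quotients commute with direct sums.
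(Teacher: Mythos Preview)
Your argument is correct and follows the same core idea as the paper: use Blanc's results to see that $K_0^{\rm{top}}$ splits along semiorthogonal decompositions compatibly with the comparison map, then read off a direct-sum decomposition of $\Lambda$.

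The one genuine difference is how you handle the possibility that $\lss/{\isim}$ is infinite. You pass to a filtered colimit of the $\cD_S$ and invoke that $\bf{K}^{\rm{top}}$ is finitary; the paper instead observes that \Cref{D:numerical} only asks for $\bQ$-linear independence of $v(\cC^{E_1}),\ldots,v(\cC^{E_n})$ for \emph{finite} collections, so it simply embeds any such collection into the finite SOD $\cD=\langle \cC^{E_1},\ldots,\cC^{E_n},\cD'\rangle$ with $\cD'=\langle \cC^G: G\not\isim E_1,\ldots,E_n\rangle$ and applies additivity there. This avoids any appeal to filtered-colimit preservation for $\bf{K}^{\rm{top}}$---a property you assert but would need to locate precisely in \cite{BlancTopK}---and is the more economical route. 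Your approach does buy you the full decomposition $\Lambda=\bigoplus_{E\in\lss/{\isim}}(M_E)_{\rm{tf}}$ in one stroke (recovering \Cref{L:linearalgebra}(1) directly, as you note), but for the statement at hand the finite version suffices.
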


\begin{proof} 
     Using \cite{BlancTopK}*{Thm 1.1}, it follows that $K_0^{\rm{top}}$ is an additive invariant of semiorthogonal decomp\-ositions.\endnote{We give a proof of this in the finite semiorthogonal decomposition case, as this is our intended application. It suffices to consider $\cC = \langle \cA,\cB\rangle$ of a pre-triangulated dg-category $\cC$ over $\bC$. There is an associated exact sequence of dg-categories $0\to \cA \to \cC\to \cC/\cA \to 0$. $\pi:\cC\to \cC/\cA$ admits a section $s:\cC/\cA\to \cC$ given by $\cC/\cA\xrightarrow{\sim} \cB\hookrightarrow \cC$. \cite{BlancTopK}*{Thm. 1.1(c)} gives a distinguished triangle in the homotopy category of spectra
    \[
    \bf{K}^{\rm{top}}(\cA)\to \bf{K}^{\rm{top}}(\cC)\to \bf{K}^{\rm{top}}(\cC/\cA)
    \]
    and passing to the long exact sequence of homotopy groups one has 
    \[
    \cdots \to K_{1}^{\rm{top}}(\cC/\cA) \to K_0^{\rm{top}}(\cA)\to  K_0^{\rm{top}}(\cC)  \to K_0^{\rm{top}}(\cC/\cA) \to \cdots
    \]
    however the splitting $\pi \circ s = \id_{\cC/\cA}$ induces a splitting $K_i^{\rm{top}}(\cC) = K_i^{\rm{top}}(\cA) \oplus K_i^{\rm{top}}(\cB) \cong K_i^{\rm{top}}(\cA) \oplus K_i^{\rm{top}}(\cC/\cA)$ for each $i$ whence the result follows. So, given $\cC = \langle \cC_1,\ldots, \cC_n\rangle$ one has a direct sum decomposition $K_\ell(\cC) = \bigoplus_{i=1}^n K_\ell(\cC_i)$ $\forall\: \ell$. 
    } So, for $\cD = \langle \cC_1,\ldots, \cC_n\rangle$, one has $\bigoplus_{i=1}^n K_0(\cC_i) = K_0(\cD)$ and $\bigoplus_{i=1}^n K_0^{\rm{top}}(\cC_i) = K_0^{\rm{top}}(\cD)$. Furthermore, $K_0(\cD)\to K_0^{\rm{top}}(\cD)$ maps $K_0(\cC_i) \to K_0^{\rm{top}}(\cC_i)$,\endnote{This follows from the functoriality statement in \cite{BlancTopK}*{Thm. 1.1(d)}. Indeed, $\cC_i\hookrightarrow \cC$ is a functor of $\bC$-linear dg-categories and consequently there is a commutative square 
    \[
    \begin{tikzcd}[ampersand replacement=\&]
    K_0(\cC_i)\arrow[d]\arrow[r]\&K_0(\cC)\arrow[d]\\
    K_0^{\rm{top}}(\cC_i)\arrow[r]\&K_0^{\rm{top}}(\cC).
    \end{tikzcd}\]
    } 
    so there is an induced decomposition $\Lambda = \bigoplus_{i=1}^n \im(K_0(\cC_i)\to K_0^{\rm{top}}(\cC_i))_{\rm{tf}}$, as needed. The claim follows.\endnote{Suppose given a quasi-convergent path $\sigma_\bullet$ in $\Stab(\cD)$ with associated semiorthogonal decomposition $\langle \cC^F: F \in \lss/{\isim}\rangle$. Given limit semistable objects $F_1,\ldots F_n$ representing distinct classes in $\lss/{\isim}$, one has $K_0(\cD) = \bigoplus_{i=1}^nK_0(\cC^{F_i}) \oplus K_0(\cD)$ where $\cD = \langle \cC^G: G \not\isim F_1,\ldots, F_n\rangle$; a similar decomposition holds for $K^{\rm{top}}_0(\cD)$. Tensoring with $\bQ$ yields the result.} 
\end{proof}

\begin{ex}\label{E:numerical}
When $\cD = \DCoh(X)$ for a smooth complex projective variety $X$, one often considers stability conditions whose central charge factors through $\ch:K_0(X) \to H^*_{\rm{alg}}(X)_{\rm{tf}}$, where $H^*_{\rm{alg}}(X):= \im(\ch:K_0(X) \to H^*(X;\bZ))$ \cite{BMT}. $\Stab(X)$ denotes the corresponding space of stability conditions. $\bf{K}^{\rm{top}}(\cD)$ is equivalent to the usual topological K-theory spectrum of $X$, and taking $\pi_0$ of Blanc's map $\bf{K}(\cD) \to \bf{K}^{\rm{top}}(\cD)$ recovers the canonical map $K_0(X)\to K_0^{\rm{top}}(X)$. We also have a commutative diagram
\begin{equation*}
    \begin{tikzcd}
        K_0(X)\arrow[r,"\ch"]\arrow[d] & H^*_{\rm{alg}}(X)\arrow[d]\\
        K_0^{\rm{top}}(X)_{\bQ}\arrow[r,"\ch"]\arrow[r,"\cong",swap]& H^{\rm{even}}(X;\bQ),
    \end{tikzcd}
\end{equation*}
which combined with \Cref{P:top_numerical} shows that any quasi-convergent path in $\Stab(X)$ is numerical.
\end{ex}

\section{Gluing stability conditions}

\subsection{Preliminaries on homological algebra}

We establish a pair of homological algebra results for use in \S \S 3.2-3.3. If $\cB$ is a pre-triangulated dg-category with a bounded t-structure, we denote by $\cB^\heart$ its heart.

\begin{prop}
\label{P:Boundedtamp}
Let $\cB$ and $\cC$ be idempotent complete pre-triangulated dg-categories with bounded $t$-structures. If $\cB$ is smooth and proper, then any exact functor $\psi:\cB\to \cC$ has bounded t-amplitude.
\end{prop}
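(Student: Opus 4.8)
The plan is to produce a resolution of the diagonal and feed it to $\psi$, using smoothness of $\cB$ to resolve and properness to control the resulting pieces.

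First I would reduce to a statement about the heart: writing $\cA=\cB^{\heart}$ and, for $a\le b$, $\cC^{[a,b]}:=\cC^{\le b}\cap\cC^{\ge a}$, I claim it suffices to produce $a\le b$ with $\psi(\cA)\subseteq\cC^{[a,b]}$. Indeed, any $X\in\cB^{\le 0}$ is, by boundedness of the $t$-structure on $\cB$, a finite iterated extension of the objects $H^i_{\cB}(X)[-i]$ with $i\le 0$; since $H^i_{\cB}(X)\in\cA$ and $\psi$ is exact, $\psi$ carries these to $\psi(H^i_{\cB}(X))[-i]\in\cC^{[a+i,\,b+i]}\subseteq\cC^{\le b}$, and as $\cC^{\le b}$ is extension-closed this forces $\psi(\cB^{\le 0})\subseteq\cC^{\le b}$; dually $\psi(\cB^{\ge 0})\subseteq\cC^{\ge a}$, which is bounded $t$-amplitude.

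Next I would fix a classical generator $G$ of $\rm{Ho}(\cB)=\Perf(\cB)$ (which exists since $\cB$ is smooth and proper), set $R:=\RHom_{\cB}(G,G)$ so that $\rm{Ho}(\cB)\simeq\Perf(R)$, and use the two hypotheses separately: properness makes $R$ have finite-dimensional total cohomology, while smoothness makes $R$ a smooth dg-algebra, so the diagonal $R$-bimodule is perfect — a retract of an object built from shifts $(R\otimes_k R)[j]$, $|j|\le s_0$, by at most $N_0$ cones and finite sums, for suitable constants $N_0,s_0$. Under $\rm{Ho}(\cB)\simeq\Perf(R)$ the functor $\psi$ is $P\mapsto P\otimes_R M$ with $M:=\psi(G)\in\cC$ given its natural left $R$-module structure (the universal property of $\Perf(R)$; here $\cC$ idempotent complete is used). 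Resolving the middle factor of $P\otimes_R R\otimes_R M$ then exhibits $\psi(P)$ as a retract of an $N_0$-step iterated extension of objects $(P^{\natural}\otimes_k M)[j]$ with $|j|\le s_0$, where $P^{\natural}$ is the underlying $k$-complex of $P$; for $P$ corresponding to $A\in\cA$ one has $P^{\natural}\simeq\RHom_{\cB}(G,A)$. The decisive point is then that this last complex is uniformly bounded: by properness its cohomology $\Ext^j_{\cB}(G,A)$ is finite-dimensional, and by boundedness of the $t$-structure it vanishes unless $j$ lies in the fixed window $[-\gamma_+,-\gamma_-]$ determined by the cohomological range $[\gamma_-,\gamma_+]$ of the single object $G$ (for $j$ outside this window $G$ and $A[j]$ lie on opposite sides of the $t$-structure, so $\Hom(G,A[j])=0$). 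Being a bounded complex of finite-dimensional $k$-spaces it is formal, so $P^{\natural}\otimes_k M\simeq\bigoplus_j M^{\oplus\dim\Ext^j_{\cB}(G,A)}[-j]$ lies in the fixed band $\cC^{[\delta_--\gamma_+,\,\delta_+-\gamma_-]}$, where $M=\psi(G)\in\cC^{[\delta_-,\delta_+]}$; the $\le N_0$ cones, $\le s_0$ shifts and one retract widen this band by at most $N_0+s_0$ on each side. Hence $\psi(A)$ lies in a band of $\cC$ independent of $A\in\cA$, and the reduction finishes the argument.

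The main obstacle I expect is this middle step: checking carefully, in the dg setting, that $\psi$ really is the integral transform $P\mapsto P\otimes_R M$ and that perfectness of the diagonal $R$-bimodule (finitely many cones, bounded shifts, one retract) propagates — through the exact operations $P\otimes_R(-)\otimes_R M$ and $(-)\otimes_k M$ — to a uniform cohomological band in $\cC$. Once that bookkeeping is set up, everything else uses only boundedness of the two $t$-structures together with properness of $\cB$. (An alternative is to invoke finiteness of the Rouquier dimension of $\cB$, writing $\rm{Ho}(\cB)=\langle G\rangle_N$, but then one must separately verify that objects of $\cA$ can be built from $G$ using only boundedly many shifts — exactly what the resolution of the diagonal packages for free.)
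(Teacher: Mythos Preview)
Your overall strategy coincides with the paper's: both arguments resolve the diagonal (equivalently, write $\id_{\cB}$ in the thick closure of $G\otimes_k\RHom_{\cB}(G,-)$), post-compose with $\psi$, and thereby reduce to showing that $A\mapsto\RHom_{\cB}(G,A)$ has uniformly bounded $t$-amplitude on the heart. The reduction to the heart and the bookkeeping with the perfect diagonal bimodule are fine.

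The genuine gap is in your claim that $\Ext^j_{\cB}(G,A)=0$ for all $j\notin[-\gamma_+,-\gamma_-]$ because ``$G$ and $A[j]$ lie on opposite sides of the $t$-structure.'' The $t$-structure axiom is $\Hom(\cB^{\le n},\cB^{\ge n+1})=0$, and it only gives one-sided vanishing: you do get $\Ext^j(G,A)=0$ for $j<-\gamma_+$ (since then $G\in\cB^{\le\gamma_+}$ and $A[j]\in\cB^{\ge\gamma_++1}$), but for $j>-\gamma_-$ you would need $\Hom(\cB^{\ge\gamma_-},\cB^{\le\gamma_--1})=0$, which is not an axiom and is false. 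For example, take $\cB=\DCoh(\bP^1)$ with generator $G=\cO\oplus\cO(1)\in\Coh(\bP^1)$, so $\gamma_-=\gamma_+=0$; your window is $\{0\}$, yet $\Ext^1(\cO(1),\cO(-1))\cong k$. The paper supplies the missing upper bound using the Serre functor $S$ on $\cB$ (available since $\cB$ is smooth and proper): Serre duality gives $\Ext^j_{\cB}(G,A)\cong\Ext^{-j}_{\cB}(A,S(G))^*$, and now the $t$-structure axiom, applied with $A\in\cA$ in the \emph{first} argument and the fixed object $S(G)$ in the second, forces vanishing once $j$ exceeds minus the lowest cohomological degree of $S(G)$. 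With this single correction your argument matches the paper's.
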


\begin{proof}
By \cite[Cor.~2.13]{ToenVaquie}, $\cB$ admits a classical generator $G \in \cB$ in the sense of \cite{BvdB02}, and by replacing $G$ with its homology, we may assume that $G \in \cB^\heart$. Because $\cB$ is smooth, the identity functor $\id : \cB \to \cB$ lies in the idempotent complete pre-triangulated closure of the functor $G \otimes_k \RHom_\cB(G,-)$ in the $\infty$-category $\Fun_k^{\rm{ex}}(\cB,\cB)$ of exact $k$-linear functors.\endnote{Derived Morita theory \cite{DerivedMorita} induces an equivalence between exact functors $\cB \to \cB$ and $\cB \otimes_k \cB^{\rm op}$-modules, under which the identity functor corresponds to the diagonal $\cB \otimes_k \cB^{\rm op}$-module. By definition, if $\cB$ is smooth, this bimodule lies in the idempotent complete pre-triangulated closure of $\cB \otimes_k \cB^{\rm op}$. Hence the identity functor $\id : \cB \to \cB$ lies in the idempotent complete pre-triangulated closure of the functor $G \otimes_k \RHom_\cB(G,-)$.} It follows that $\psi \cong \psi \circ \id_{\cB}$ lies in the idempotent complete pre-triangulated closure of the functor
\[
M \mapsto \psi(G) \otimes_k \RHom_\cB(G,M)
\]
in $\Fun_k^{\rm {ex}}(\cB,\cC)$.

This reduces the claim to showing that the functor $\cB \to k\Mod$ taking $M \mapsto \RHom_\cB(G,M)$ has uniformly bounded $t$-amplitude. Since $G \in \cB^\heart$, for any $M \in \cB^\heart$ one has $H^i(\RHom_\cB(G,M)) = 0$ for $i<0$. On the other hand, because $\cB$ is smooth and proper it admits a Serre functor $S : \cB \to \cB$, and we have
\[
H^i(\RHom_\cB(G,M)) = H^{-i}(\RHom_\cB(M,S(G))^*).
\]
If $k$ is the degree of the lowest non-vanishing cohomology object of $S(G)$ in the $t$-structure on $\cB$, then the right hand side vanishes for $i>-k$ whenever $M \in \cB^\heart$.
\end{proof}

\begin{prop}
\label{P:hearthombound}
Suppose $\cC$ is a smooth, proper, and idempotent complete pre-triangulated dg-category with $\cC=\langle \cC_1,\cC_2\rangle$. Suppose $\cC_1$ and $\cC_2$ are equipped with bounded t-structures. There exists an $m\in \bb{Z}$ such that $\Hom^{\le m}_{\cC}(\cC_1^\heart,\cC_2^\heart)=0.$
\end{prop}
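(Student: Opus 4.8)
The plan is to use Serre duality on $\cC$ to convert the desired vanishing in very negative degrees into a vanishing in large positive degrees, and then to deduce the latter from the bounded $t$-amplitude statement of \Cref{P:Boundedtamp}. Note that the direction here matters: we are bounding $\Hom^k_\cC(\cC_1^\heart,\cC_2^\heart)$, i.e.\ maps from the first SOD factor to the second, which (unlike $\Hom_\cC(\cC_2,\cC_1)$) need not vanish in degree $0$; so the content is genuinely a statement about low degrees, and we will end up with $m<0$.

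First I would record the structural input. Since $\cC=\langle\cC_1,\cC_2\rangle$ with $\cC$ smooth, proper, and idempotent complete, each factor $\cC_i$ is admissible and is itself a smooth, proper, idempotent complete pre-triangulated dg-category: properness and idempotent completeness of $\cC_i$ are automatic from fullness of $\cC_i\subset\cC$ together with thickness of the SOD factors, while smoothness follows from admissibility of $\cC_i$ in the smooth category $\cC$. In particular $\cC$ carries a Serre functor $S=S_\cC$, and the SOD provides an exact projection functor $\gr_2\colon\cC\to\cC_2$ fitting into a functorial triangle $\gr_1(X)\to X\to\gr_2(X)\to\gr_1(X)[1]$ with $\gr_1(X)\in\cC_1$. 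Because $\cC_1$ and $\cC_2$ are triangulated subcategories and $\Hom_\cC(\cC_2,\cC_1)=0$, one has $\RHom_\cC(B,A)=0$ for all $B\in\cC_2$ and $A\in\cC_1$; applying $\RHom_\cC(B,-)$ to the projection triangle then yields a natural isomorphism $\RHom_\cC(B,X)\cong\RHom_{\cC_2}(B,\gr_2(X))$ for all $B\in\cC_2$ and $X\in\cC$.

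Next I would combine this with Serre duality: for $A\in\cC_1$, $B\in\cC_2$, and any $k\in\bZ$,
\[
\Hom^k_\cC(A,B)\;\cong\;\Hom^{-k}_\cC(B,SA)^{\vee}\;\cong\;\Hom^{-k}_{\cC_2}\!\bigl(B,\Phi(A)\bigr)^{\vee},
\]
where $\Phi:=\gr_2\circ S\circ i_1\colon\cC_1\to\cC_2$ is an exact functor (composition of the inclusion, the Serre equivalence, and the SOD projection). It therefore suffices to produce $N\in\bZ$ with $\Hom^{j}_{\cC_2}(B,\Phi(A))=0$ for all $A\in\cC_1^\heart$, $B\in\cC_2^\heart$, and all $j\ge N$, since then $m:=-N$ works. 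Now apply \Cref{P:Boundedtamp} to $\Phi$: its source $\cC_1$ is smooth, proper, idempotent complete, and has a bounded $t$-structure, and its target $\cC_2$ is idempotent complete with a bounded $t$-structure, so $\Phi$ has bounded $t$-amplitude; hence there is $c\ge 0$ with $\Phi(\cC_1^\heart)\subseteq\cC_2^{\le c}$. For $B\in\cC_2^\heart\subseteq\cC_2^{\ge 0}$ and $j\ge c+1$ we then have $\Phi(A)[j]\in\cC_2^{\le c-j}\subseteq\cC_2^{\le -1}$, so $\Hom^j_{\cC_2}(B,\Phi(A))=\Hom_{\cC_2}(B,\Phi(A)[j])=0$ by the orthogonality axiom of the $t$-structure on $\cC_2$. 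Taking $N=c+1$, equivalently $m=-c-1$, completes the proof.

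The only non-formal ingredient is the fact that a semiorthogonal factor of a smooth and proper idempotent complete pre-triangulated dg-category is again smooth and proper — this is exactly what licenses feeding $\cC_1$ into \Cref{P:Boundedtamp} — so I expect that point (together with correctly tracking how Serre duality turns ``$k\to-\infty$'' into ``$j\to+\infty$'') to be the only place requiring care; the remaining steps (existence of $S_\cC$, the vanishing $\RHom_\cC(\cC_2,\cC_1)=0$, and the reduction through the projection triangle) are routine.
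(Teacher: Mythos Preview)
There is a genuine gap in the final step. The $t$-structure axiom gives $\Hom(\cC_2^{\le 0},\cC_2^{\ge 1})=0$, not $\Hom(\cC_2^{\ge 0},\cC_2^{\le -1})=0$; the latter is false in general (for instance, for $B,H\in\cC_2^\heart$ one has $\Hom_{\cC_2}(B,H[j])=\Ext^j(B,H)$, which need not vanish for $j>0$). So from $\Phi(A)[j]\in\cC_2^{\le -1}$ and $B\in\cC_2^{\ge 0}$ you cannot conclude $\Hom_{\cC_2}(B,\Phi(A)[j])=0$. Your Serre-duality reduction correctly converts the problem to bounding $\Hom^j_{\cC_2}(B,\Phi(A))$ for \emph{large} $j$, but that is a statement about high Ext's between heart objects of $\cC_2$, which does not follow from bounded $t$-amplitude of $\Phi$ alone; it requires an additional finite-cohomological-dimension argument for $\cC_2$ (e.g.\ applying \Cref{P:Boundedtamp} to $S_{\cC_2}$ and Serre-dualizing again), which you have not supplied. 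A minor side issue: the functorial triangle from the SOD $\langle\cC_1,\cC_2\rangle$ (with $\Hom(\cC_2,\cC_1)=0$) is $R_2(X)\to X\to X_1$ with $R_2(X)\in\cC_2$ and $X_1\in\cC_1$, not the order you wrote; this does not affect your identity $\RHom_\cC(B,X)\cong\RHom_{\cC_2}(B,R_2X)$, but your $\gr_2$ is really the right adjoint $R_2$.

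The paper avoids this trap by skipping Serre duality and using the \emph{left} adjoint $L_2$ of $i_2$ directly: adjunction gives $\Hom^m_\cC(X,Y)\cong\Hom_{\cC_2}(L_2 i_1 X, Y[m])$, and applying \Cref{P:Boundedtamp} to $L_2\circ i_1$ puts $L_2 i_1 X\in\cC_2^{\le n}$ while $Y[m]\in\cC_2^{\ge n+1}$ for $m\le -n-1$; now the vanishing is the correct $t$-structure orthogonality $\Hom(\cC_2^{\le n},\cC_2^{\ge n+1})=0$. If you prefer to keep your Serre-duality route, you must add a uniform bound on $\Ext^{>d}_{\cC_2}(\cC_2^\heart,\cC_2^\heart)$, e.g.\ by noting that $\cC_2$ is smooth and proper and applying \Cref{P:Boundedtamp} to its Serre functor.
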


\begin{proof}
$\cC$ is saturated and hence so is $\cC_2$. In particular, $i_2:\cC_2\to \cC$ admits a left adjoint $L_2$. Take $X\in \cC_1^\heart$ and $Y\in \cC_2^\heart$. Adjunction gives a natural isomorphism $\Hom_{\cC}(X,Y) = \Hom_{\cC_2}(L_2i_1(X),Y).$

$\cC_1$ is smooth and proper, so by \Cref{P:Boundedtamp} the functor $L_2\circ i_1:\cC_1\to \cC_2$ has bounded t-amplitude. In particular, $L_2(i_1(\cC_1^\heart))\subset \cC_2^{[-n,n]}$ for some $n\ge 0$. Now, take $m\le -n-1$. One has $\Hom_{\cC}^{m}(X,Y)=\Hom_{\cC_2}(L_2(i_1(X)),Y[m])$ and $\cC_2^\heart[m]\subset \cC_2^{\ge n+1}$. Because $L_2(i_1(X))\in \cC_2^{\le n}$, it follows that $\Hom_{\cC_2}(L_2(i_1(X)),Y[m])=0.$
\end{proof}

\begin{ex}
\label{Ex:examples}
    We give examples where \Cref{P:hearthombound} holds.
    \begin{enumerate} 
        \item For $X$ a smooth projective variety, $\DCoh(X)$ is smooth, proper, and idempotent complete. Smoothness and properness follow from \cite[Prop. 3.31]{Or16}, and idempotent completeness follows for instance from \cite[Prop. 2.1.1]{BvdB02}. \vspace{2mm}
        \item Suppose $A$ is a finite dimensional algebra of finite global dimen\-sion. Let $\rm{D}^{\rm{b}}(\rm{mod}\:A)$ denote the bounded derived category finite dimensional left $A$-modules. $\rm{D}^{\rm{b}}(\rm{mod}\:A)$ is smooth, proper, and idempotent complete. Smoothness and properness are by \cite[\S8]{KS08}, while idempotent completeness follows from \cite[Prop. 3.4]{BNeeman}. As a simple example, $A$ could be the path algebra of an acyclic quiver.
    \end{enumerate}
\end{ex}

\subsection{Gluing constructions revisited}

We will use ideas and results from \cite{CP10}. Stability cond\-itions satisfying the support property are \emph{reasonable} in the sense of \cite{CP10}, so we may apply their results.\endnote{A stability condition is \emph{reasonable} in the sense of \cite{CP10} if
\[
\inf_{E\ne 0\:\sigma\text{-ss}} \lvert Z(E)\rvert > 0.
\]
In our context, stability conditions are assumed to satisfy the support property of \cite{KS08}. Choose a norm $\lVert\:\cdot\:\rVert$ on $\Lambda \otimes \bb{R}$ such that $\lVert v\rVert \ge 1$ for all $v\in \Lambda$. By our choice of norm, the support property implies $0<C\le \lvert Z(v(E))\rvert$. In particular, it implies the reasonable assumption of \cite{CP10}. So, a stronger assumption is implicit and we remove this terminology. For a discussion of this see \cite{BMSAbelian}*{Appendix A}. 
} 

Our stability conditions on $\cC$ have the support property with respect to some fixed homomorphism $v:K_0(\cC)\twoheadrightarrow \Lambda$ to a free Abelian group of finite rank. Throughout this section, we consider semiorthogonal decompositions $\cC = \langle \cC_1,\ldots, \cC_n\rangle$ together with a splitting $\Lambda = \bigoplus_{i=1}^n \Lambda_i$ such that $v(K_0(\cC_i)) = \Lambda_i$ for each $i$. For each $j$, $\iota_j:\cC_j\to \cC$ denotes the inclusion functor. To simplify notation, we will denote $\Stab(\cC) := \Stab_\Lambda(\cC)$ and $\Stab(\cC_i) := \Stab_{\Lambda_i}(\cC)$ in this context.

\begin{defn}
\label{D:generalglued}
 Let $\vec{\sigma} = (\sigma_1,\ldots, \sigma_n) \in \prod_{i=1}^n \Stab(\cC_i)$ be given and write $\sigma_i = (Z_i,\cQ_i)$ for each $i$. Write $\cA_i = \cQ_i(0,1]$ and suppose $i<j$ implies $\Hom_{\cC}^{\le 0}(\cA_i,\cA_j) = 0$. We say $\sigma = (Z,\cQ) \in \Stab(\cC)$ is \emph{glued} from $\vec{\sigma}$ if 
\begin{enumerate}
    \item $\cA_\sigma := \cQ(0,1] = [\cA_1,\ldots, \cA_n]$; and \vspace{2mm}
    \item for all $E_j \in \cC_j$, $Z(\iota_j(E_j)) = Z_j(E_j)$.
\end{enumerate}
\end{defn}

When $n=2$, \Cref{D:generalglued} recovers \cite[Defn. p. 571]{CP10}. If $\sigma = (Z,\cQ)$ is glued from $\vec{\sigma}$, we write $\gl(\vec{\sigma}) = \sigma$. $\sigma = \gl(\vec{\sigma})$ can be obtained by composing $n-1$ gluing maps from the $n = 2$ case. As a consequence, \cite{CP10}*{Prop. 2.2} can be applied inductively to show that $\cQ_j(\phi)\subset \cQ(\phi)$ for all $1\le j \le n$ and all $\phi \in \bR$.\endnote{For simplicity, consider the $n=3$ case. By \cite{CP10}*{Prop. 2.2}, since $\Hom^{\le 0}_{\cC}(\cA_1,\cA_2) = 0$, there is a glued stability condition on $\langle \cC_1,\cC_2\rangle$ with underlying heart $\cA_{12} = [\cA_1,\cA_2]$ and central charge given by $Z(\iota_j(E_j)) = Z_j(E_j)$ for all $E_j \in \cC_j$ for $j=1,2$. Any object $X$ of $\cA_{12}$ can be written in the form $A_2\to X \to A_1$ so that $\Hom^{\le 0}_{\cC}(\cA_{12},\cA_3)$ and $0$ and there is a glued stability condition on $\cC = \langle \cC_1,\cC_2,\cC_3\rangle$ with heart $[\cA_1,\cA_2,\cA_3] = \cA$ and central charge as before.}

The pair $(\cA_\sigma,Z_\sigma)$ is determined uniquely by \Cref{D:generalglued}, so we obtain a function $\gl: \cG\to \Stab(\cC)$, where $\cG\subset \prod_{i=1}^n \Stab(\cC_i)$ denotes the locus of \emph{gluable} stability conditions, i.e., tuples for which (1) and (2) of \Cref{D:generalglued} define a stability condition on $\cC$.

\begin{lem}
\label{L:stronglygluedobjects}
    In the above notation, suppose $\sigma \in \Stab(\cC)$ is given such that $\gl(\vec{\sigma}) = \sigma$. If $i<j$ implies $\Hom_{\cC}^{\le 1}(\cA_i,\cA_j) = 0$, then 
    \begin{enumerate} 
        \item $\cQ(\phi) = \bigoplus_{i=1}^n\cQ_i(\phi)$; and \vspace{2mm}
        \item $\forall I\subset \bR$, $\cQ(I) = [\cQ_1(I),\ldots, \cQ_n(I)]$.
    \end{enumerate}
\end{lem}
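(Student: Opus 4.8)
The plan is to extract from the strengthened vanishing hypothesis the stronger structural fact that the glued heart $\cA_\sigma = \cQ(0,1] = [\cA_1,\ldots,\cA_n]$ decomposes as a \emph{product} of abelian categories, $\cA_\sigma \simeq \prod_{i=1}^n \cA_i$, with the central charge $Z$ of $\sigma$ restricting to $Z_i$ on $\cA_i$; both parts of the lemma then drop out. To establish this product decomposition, I would first note that, because the $\cC_i$ are triangulated subcategories and the SOD gives $\Hom^0_\cC(\cC_i,\cC_j)=0$ for $i>j$, one in fact has $\Hom^k_\cC(\cC_i,\cC_j)=0$ for all $k\in\bZ$ whenever $i>j$ (shift the source). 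Combined with the hypothesis $\Hom^{\le 1}_\cC(\cA_i,\cA_j)=0$ for $i<j$, this gives $\Hom^0_\cC(\cA_i,\cA_j)=\Hom^1_\cC(\cA_i,\cA_j)=0$ for \emph{every} pair $i\ne j$. Since each $\cA_i$ is closed under extensions in $\cC$ (it is the heart of a bounded $t$-structure on $\cC_i$, and $\cC_i$ is a triangulated, hence extension-closed, subcategory of $\cC$), an induction on the length of an extension filtration shows that $[\cA_1,\ldots,\cA_n]$ consists exactly of the objects $\bigoplus_i A_i$ with $A_i\in\cA_i$, that every extension between objects of distinct $\cA_i$ splits, and that $\Hom_{\cA_\sigma}(\bigoplus_i A_i,\bigoplus_i B_i)=\bigoplus_i\Hom_{\cA_i}(A_i,B_i)$; hence $\cA_\sigma\simeq\prod_i\cA_i$ as abelian categories. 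Finally, $Z(\iota_i(A_i))=Z_i(A_i)$ from \Cref{D:generalglued}(2) identifies $Z$ with $\bigoplus_i Z_i$ under this decomposition.

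For part (1), the inclusion $\bigoplus_i\cQ_i(\phi)\subseteq\cQ(\phi)$ is immediate from $\cQ_i(\phi)\subseteq\cQ(\phi)$ (established before the lemma) together with additivity of $\cQ(\phi)$, and uses only the weaker gluing hypothesis. For the reverse inclusion I would reduce to $\phi\in(0,1]$ using $\cQ(\phi)[1]=\cQ(\phi+1)$ and $\cQ_i(\phi)[1]=\cQ_i(\phi+1)$. For such $\phi$, the $\sigma$-semistable objects of phase $\phi$ are precisely the $Z$-semistable objects of the heart $\cA_\sigma$ of phase $\phi$, by the standard dictionary between slicings and hearts equipped with stability functions \cite{Br07}. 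Writing $E=\bigoplus_i A_i\in\cA_\sigma$ and using that subobjects and quotients in $\prod_i\cA_i$ are computed componentwise together with $Z=\bigoplus_i Z_i$, one checks that $E$ is $Z$-semistable of phase $\phi$ if and only if each $A_i$ is either zero or $Z_i$-semistable of phase $\phi$: each nonzero $A_i$ is simultaneously a sub- and a quotient object of $E$, which forces its phase to be $\phi$, and any subobject of $A_i$ is a subobject of $E$; conversely a subobject of $\bigoplus_i A_i$ is a direct sum of subobjects $B_i\subseteq A_i$, each of phase $\le\phi$, so $Z(\bigoplus_i B_i)$ has phase $\le\phi$. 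Hence $\cQ(\phi)=\bigoplus_i\cQ_i(\phi)$ for $\phi\in(0,1]$, and then for all $\phi$ by applying shifts.

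For part (2), recall that for $I\subseteq\bR$ the category $\cQ(I)$ (resp. $\cQ_i(I)$) is the extension closure of $\bigcup_{\phi\in I}\cQ(\phi)$ (resp. $\bigcup_{\phi\in I}\cQ_i(\phi)$). Then part (1) yields
\[
\cQ(I)=[\cQ(\phi):\phi\in I]=[\textstyle\bigoplus_i\cQ_i(\phi):\phi\in I]=[\cQ_i(\phi):\phi\in I,\ 1\le i\le n]=[\cQ_1(I),\ldots,\cQ_n(I)],
\]
where the middle equality is part (1) and the outer ones use that the extension closure of a family of direct sums coincides with that of the individual summands. I expect the main obstacle to be the structural claim of the first paragraph: one must see clearly that upgrading from $\Hom^{\le 0}$-vanishing (which only underlies the \emph{definition} of the glued heart) to $\Hom^{\le 1}$-vanishing is exactly what forces all cross-extensions to split, so that the iterated-extension structure of $\cA_\sigma$ collapses to an honest product, and one must verify that this product structure, with its block-diagonal central charge, propagates faithfully to the slicing. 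The remaining verifications are routine bookkeeping in abelian categories.
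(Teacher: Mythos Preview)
Your proof is correct and follows essentially the same approach as the paper's: both establish that the strengthened $\Hom^{\le 1}$-vanishing forces the glued heart to split as $\cA_\sigma=\bigoplus_i\cA_i$, and then deduce (1) by analyzing $Z$-semistability componentwise and (2) formally from (1). The only difference is presentational: the paper argues (1) via an explicit destabilizing morphism from $\cQ_j(\phi_j)\subset\cQ(\phi_j)$ and a central-charge phase count, whereas you package the same content as ``semistability in a product abelian category with block-diagonal stability function is componentwise''; your write-up also spells out the product decomposition of the heart more carefully than the paper's one-line assertion.
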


\begin{proof}
Since $\Hom^{1}_{\cC}(\cA_i,\cA_j) = 0$ for all $i<j$, one has $[\cA_1,\ldots, \cA_n] = \cA_1\oplus \cdots \oplus \cA_n$. For (1), without loss of generality take $\phi \in (0,1]$ so that $X\in \cQ(\phi) \subset [\cA_1,\ldots, \cA_n].$ Write $X = E_1\oplus \cdots \oplus E_n$ with $E_i\in \cA_i$ for each $i$. If some $E_j$ admits a morphism $F_j\to E_j$ from $F_j \in \cQ_j(\phi_j)$ with $\phi_j>\phi$, then $F_j\to X$ destabilizes $X$, as $\cQ_j(\phi_j)\subset \cQ(\phi_j)$. So, the HN factors of each $E_j$ have phase $\le \phi$. If some $E_j$ has a $\sigma_j$-HN factor $G_j$ of phase $\psi <\phi$, then $Z(X) = \sum_i Z(E_i)$ is impossible. Consequently, each $E_i\in \cQ_i(\phi)$. 

For (2), $\cQ(I)$ consists of those objects all of whose HN factors have phase in $I$. So, $\cQ(I) = [\cQ(\phi):\phi \in I].$ $\cQ(\phi) = [\cQ_1(\phi),\ldots, \cQ_n(\phi)]$ by (1) and the result follows.      
\end{proof}

We state a modified version of \cite[Thm. 3.6]{CP10}.

\begin{thm}
\label{T:stronggluing}
Given $\vec{\sigma} = (\sigma_1,\ldots, \sigma_n)\in \prod_{i=1}^n \Stab(\cC_i)$, if for all $i<j$,
\begin{enumerate}
    \item $\Hom_{\cC}^{\leq 1}(\cA_i,\cA_j)  = 0$, and \vspace{2mm}
    \item $\exists a \in (0,1)$ such that $\Hom_{\cC}^{\leq 0}(\cQ_i(a,a+1], \cQ_j(a,a+1]) = 0$,
\end{enumerate}
then $\vec{\sigma} \in \cG$, i.e., it is gluable.
\end{thm}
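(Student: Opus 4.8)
The plan is to induct on $n$, reducing everything to the binary case, which is the gluing theorem of Collins--Polishchuk. For $n=1$ there is nothing to check. For $n=2$ we have $\cC=\langle\cC_1,\cC_2\rangle$; hypothesis (1) gives in particular $\Hom_{\cC}^{\le 0}(\cA_1,\cA_2)=0$, so by \cite{CP10}*{Prop.~2.2} the category $[\cA_1,\cA_2]$ is the heart of a bounded t-structure on $\cC$ and $\cQ_j(\phi)\subset\cQ(\phi)$ for the resulting glued data. That the central charge $Z$ (defined by additivity along $K_0(\cC)=K_0(\cC_1)\oplus K_0(\cC_2)$ with $Z\circ\iota_j=Z_j$) is a stability function having the Harder--Narasimhan property and satisfying the support property is then obtained exactly as in \cite[Thm.~3.6]{CP10}, where the role of the phase-compatibility input is played by our condition (2) with its parameter $a$. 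Hence $(\sigma_1,\sigma_2)\in\cG$, the resulting datum being the glued stability condition of \Cref{D:generalglued}.

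For the inductive step, set $\cC':=\langle\cC_1,\ldots,\cC_{n-1}\rangle$, so that $\cC=\langle\cC',\cC_n\rangle$, and $\Lambda':=\bigoplus_{i<n}\Lambda_i$, for which $v(K_0(\cC'))=\Lambda'$. The pairwise hypotheses (1) and (2) restrict to $\sigma_1,\ldots,\sigma_{n-1}$, so by induction $\sigma':=\gl(\sigma_1,\ldots,\sigma_{n-1})$ exists in $\Stab_{\Lambda'}(\cC')$; write $\sigma'=(Z',\cQ')$ and $\cA':=\cQ'(0,1]=[\cA_1,\ldots,\cA_{n-1}]$. Since hypothesis (1) holds among the indices $<n$, \Cref{L:stronglygluedobjects} applies to $\sigma'$ and gives $\cQ'(I)=[\cQ_1(I),\ldots,\cQ_{n-1}(I)]$ for every interval $I\subset\bR$. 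I would then check that $(\sigma',\sigma_n)$ satisfies the two hypotheses of the binary case. For the first, $\cA'=[\cA_1,\ldots,\cA_{n-1}]$ and $\Hom_{\cC}^{\le 1}(\cA_i,\cA_n)=0$ for all $i<n$ by (1); since the objects $X$ with $\Hom_{\cC}^{\le 1}(X,Y)=0$ for all $Y\in\cA_n$ form an extension-closed subcategory (via the long exact sequences in $\Hom_{\cC}^{\bullet}(-,\cA_n)$), it follows that $\Hom_{\cC}^{\le 1}(\cA',\cA_n)=0$. For the second, fixing $a\in(0,1)$ as in (2), the above identity with $I=(a,a+1]$ gives $\cQ'(a,a+1]=[\cQ_1(a,a+1],\ldots,\cQ_{n-1}(a,a+1]]$, and $\Hom_{\cC}^{\le 0}(\cQ_i(a,a+1],\cQ_n(a,a+1])=0$ for $i<n$ by (2), so the same extension-closure argument yields $\Hom_{\cC}^{\le 0}(\cQ'(a,a+1],\cQ_n(a,a+1])=0$.

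Applying the binary case once more then produces $\gl(\sigma',\sigma_n)\in\Stab_\Lambda(\cC)$, with heart $[\cA',\cA_n]=[\cA_1,\ldots,\cA_n]$ and central charge restricting to $Z'$ on $\cC'$ (hence to $Z_i$ on $\cC_i$ for $i<n$) and to $Z_n$ on $\cC_n$; this is exactly the glued datum of \Cref{D:generalglued}, so $\vec{\sigma}\in\cG$. One small point to verify along the way is that iterating the binary gluing map genuinely lands on this heart and central charge, i.e.\ that the $n$-fold glued datum is the composite of $n-1$ binary ones; this is the content of the discussion preceding \Cref{D:generalglued}.

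The only genuinely analytic input---existence of Harder--Narasimhan filtrations for the glued heart and the support property---is confined to the $n=2$ case and imported wholesale from \cite{CP10}; the work in the argument above is purely the bookkeeping needed to close the induction. The key structural feature, and the main thing I would want to get right, is that hypothesis (1) is imposed in degree $\le 1$ rather than merely $\le 0$: this is precisely what makes \Cref{L:stronglygluedobjects} applicable, and hence what lets one recover hypothesis (2) for the pair $(\cC',\cC_n)$ from the decomposition $\cQ'(I)=[\cQ_1(I),\ldots,\cQ_{n-1}(I)]$. (Implicit here is that the $a$ in (2) is chosen uniformly over all pairs $i<j$, so that one and the same $a$ serves throughout the induction.)
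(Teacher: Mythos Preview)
Your inductive structure is essentially the paper's: reduce to the binary case via \cite{CP10}, then close the induction by using \Cref{L:stronglygluedobjects} to propagate hypotheses (1) and (2) to the grouped pair. The paper happens to group $(\sigma_{12},\sigma_3,\ldots,\sigma_n)$ rather than $(\sigma',\sigma_n)$, but that is immaterial.

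There is one genuine gap. You write that the support property for the glued datum ``is then obtained exactly as in \cite[Thm.~3.6]{CP10}'' and later that it is ``imported wholesale from \cite{CP10}.'' But Collins--Polishchuk work with \emph{reasonable} stability conditions, a strictly weaker notion than the support property (see the paper's discussion preceding \Cref{D:generalglued}); their theorem only outputs a reasonable prestability condition. So the support property must be verified separately even in the $n=2$ case. The paper does this explicitly: by \Cref{L:stronglygluedobjects}(1), every $E\in\cQ(\phi)$ decomposes as $E_1\oplus E_2$ with $E_i\in\cQ_i(\phi)$, and then taking the norm $\lVert\cdot\rVert=\lVert\cdot\rVert_1\oplus\lVert\cdot\rVert_2$ on $\Lambda_\bR$ one has
\[
\lVert v(E)\rVert=\lVert v_1(E_1)\rVert_1+\lVert v_2(E_2)\rVert_2\le \tfrac{1}{\min\{C_1,C_2\}}\,\lvert Z(v(E))\rvert,
\]
where $C_i$ are the support constants for $\sigma_i$. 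You already invoke \Cref{L:stronglygluedobjects} for the inductive bookkeeping, so you have the tool in hand; you just need to apply it here rather than deferring to \cite{CP10}. Once this is in place for $n=2$, your induction carries it along automatically, since $\sigma'$ then genuinely lies in $\Stab_{\Lambda'}(\cC')$ as you assert.

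Your parenthetical about the uniformity of $a$ across all pairs $i<j$ is correct and worth making explicit, as the paper's proof also tacitly relies on a single $a$.
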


\begin{proof}
    \cite[Thm. 3.6]{CP10} gives the $n=2$ case, except that the resulting pre-stability condition is \emph{a priori} only reasonable. Write $\Lambda = \Lambda_1\oplus \Lambda_2$ and choose norms $\lVert \:\cdot\:\rVert_i$ on $\Lambda_i\otimes \bb{R}$ for $i=1,2$. Define $\lVert \:\cdot\:\rVert$ on $\Lambda\otimes \bb{R}$ by $\lVert\:\cdot\:\rVert_1\oplus \lVert \:\cdot\:\rVert_2$. By \Cref{L:stronglygluedobjects}, any $E\in \cQ(\phi)$ is of the form $E = E_1\oplus E_2$ where $E_i\in \cQ_i(\phi)$ for $i=1,2$. Then $\lVert v(E)\rVert = \lVert v_1(E_1)\rVert_1 +\lVert v_2(E_2)\rVert_2 \le \min \{C_1,C_2\} \lvert Z(v(E))\rvert,$
    where the $C_i>0$ are the constants given by the support property for $\sigma_i$ for $i=1,2$. The general case follows from induction.\endnote{One first glues $\sigma_1$ and $\sigma_2$ to $\sigma_{12}\in \Stab(\langle \cC_1,\cC_2\rangle)$. The resulting heart is $\cA = [\cA_1,\cA_2]$. One then verifies that conditions (1) and (2) hold for $(\sigma_{12},\sigma_3,\ldots, \sigma_n)$ using \Cref{L:stronglygluedobjects}. Consider $X\in \cA_{12}$, and write it as $X_2\to X \to X_1$, where $X_i \in \cA_i$. Then, for any $Y\in \cA_j$ for $j>2$, one has an exact sequence $\Hom^i(X_1,Y) \to \Hom^{i}(X,Y)\to \Hom^i(X_2,Y)$ for $i\le 1$. Vanishing of the outer terms implies vanishing of the inner term for all $i$. \Cref{L:stronglygluedobjects} implies that $\cQ_{12}(a,a+1] = \left[\cQ_1(a,a+1],\cQ_2(a,a+1] \right]$ and this implies condition (2).}
\end{proof}

Given stability conditions $\sigma = (Z,\cQ)$ and $\tau = (Z',\cQ')$ on any category $\cC$, recall from \cite{Br07} the distance between slicings
\[
d_{\rm{slice}}(\sigma,\tau) := \sup_{0\ne E\in \cC}\big\{\lvert \phi_\cQ^-(E)-\phi_{\cQ'}^-(E)\rvert, \lvert \phi_{\cQ}^+(E)-\phi_{\cQ'}^+(E)\rvert \big\}\in [0,\infty].
\]

\begin{defn}
For $r \geq 1$, we say $\vec{\sigma} = (\sigma_1,\ldots,\sigma_n) \in \prod_{i=1}^n \Stab(\cC_i)$ is \emph{$r$-gluable} if for some $\epsilon>0$, $i<j$ implies
\begin{equation}
\label{E:Homorth}
    \Hom_{\cC}^{\le 0}(\cQ_i(-\epsilon-r,1+r], \cQ_j(-r,1+\epsilon+r))=0.
\end{equation}
We refer to the subset $\cG^s \subset \prod_i \Stab(\cC_i)$ of $1$-gluable points as \emph{strongly gluable}.\endnote{Note that if $\vec{\sigma}$ is $r$-gluable, then it is also $s$-gluable for any $1 \leq s \leq r$, and hence it is strongly gluable.} We define a subset $\cW_{r} \subset \prod_{i=1}^n \Stab(\cC_i)$ for $r>1$ as follows:
\[
\cW_{r} := \left\{ (\tau_i)_{i=1}^n \left| \begin{array}{c} \exists (\sigma_i)_{i=1}^{n } \text{ that is } r\text{-gluable,} \\ \text{and } d_{\rm{slice}}(\sigma_i,\tau_i) < r-1, \forall i  \end{array} \right.\right\} 
\]
Note that $\cW_r \subset \cG^s \subset \cG$ for all $r > 1$ by definition and \Cref{T:stronggluing}.\endnote{The inclusion $\cG^s\subset \cG$ is a special case of \Cref{T:stronggluing}. The inclusion $\cW_r\subset \cG^s$ is proven as follows. Suppose $\vec{\tau}\in \cW_r$ is as above with $\vec{\sigma}$ such that $\max_i d_{\rm{slice}}(\sigma_i,\tau_i)<r-1$. Write $\vec{\tau} = (Z_i,\cQ_i)_{i=1}^n$ and $\vec{\sigma} = (Z_i',\cR_i)_{i=1}^n$. By hypothesis, $\cQ_i(-\epsilon -1,2] \subset \cR_i(-\epsilon - r, 1+r]$ and $\cQ_j(-1,2+\epsilon) \subset \cR_j(-r,1+\epsilon +r)$ and so $\Hom^{\le 1}(\cQ_i(-\epsilon-1,2],\cQ_j(-1,2+\epsilon))$ is immediate from \eqref{E:Homorth} applied to $\vec{\sigma}$.}
\end{defn}

\begin{lem}
\label{L:dslice}
    For any $\vec{\sigma}, \vec{\tau} \in \cG^s$, $d_{\rm{slice}}(\gl(\vec{\sigma}), \gl(\vec{\tau})) = \max_i d_{\rm{slice}}(\sigma_i,\tau_i).$
\end{lem}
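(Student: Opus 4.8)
The plan is to prove the claimed equality by establishing the two inequalities separately, in each case transferring the question to the individual slicings via \Cref{L:stronglygluedobjects}. First I would check that its hypothesis applies: since $\vec\sigma$ is $1$-gluable there is $\epsilon>0$ with $\Hom^{\le 0}_\cC(\cQ_i(-\epsilon-1,2],\cQ_j(-1,2+\epsilon))=0$ for $i<j$, and since $\cA_i=\cQ_i(0,1]\subseteq \cQ_i(-\epsilon-1,2]$ and $\cA_j[1]=\cQ_j(1,2]\subseteq \cQ_j(-1,2+\epsilon)$, this yields $\Hom^{\le 1}_\cC(\cA_i,\cA_j)=\Hom^{\le 0}_\cC(\cA_i,\cA_j[1])=0$. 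Writing $\sigma=\gl(\vec\sigma)=(Z,\cQ)$ and $\tau=\gl(\vec\tau)=(Z',\cQ')$ (both defined, as $\cG^s\subseteq\cG$), \Cref{L:stronglygluedobjects} then gives $\cQ(\phi)=\bigoplus_i\cQ_i(\phi)$ with $\cQ_i(\phi)\subseteq\cQ(\phi)$ for every $\phi$, and the same statements for $\tau$.

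For the inequality $d_{\rm{slice}}(\sigma,\tau)\ge\max_i d_{\rm{slice}}(\sigma_i,\tau_i)$, I would observe that for $0\ne E\in\cC_i$ the $\sigma_i$-HN filtration of $E$, whose factors lie in $\cQ_i(\psi)\subseteq\cQ(\psi)$ and have strictly decreasing phases, is by uniqueness of HN filtrations also the $\sigma$-HN filtration of $E$; hence $\phi^\pm_\cQ(E)=\phi^\pm_{\sigma_i}(E)$, and likewise $\phi^\pm_{\cQ'}(E)=\phi^\pm_{\tau_i}(E)$. Restricting the supremum defining $d_{\rm{slice}}(\sigma,\tau)$ to $0\ne E\in\cC_i$ thus returns exactly $d_{\rm{slice}}(\sigma_i,\tau_i)$, and maximizing over $i$ gives the bound.

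For the reverse inequality, set $D:=\max_i d_{\rm{slice}}(\sigma_i,\tau_i)$ (we may assume $D<\infty$) and use the standard reformulation that $d_{\rm{slice}}(\sigma,\tau)\le D$ is equivalent to the two containments $\cQ(\phi)\subseteq\cQ'([\phi-D,\phi+D])$ and $\cQ'(\phi)\subseteq\cQ([\phi-D,\phi+D])$ holding for all $\phi$; its nontrivial direction is that for $0\ne E$ with $\sigma$-HN factors $F_1,\dots,F_m$ of phases $\phi_1>\cdots>\phi_m$, subadditivity of $\phi^+$ and superadditivity of $\phi^-$ under extensions, together with $F_k\in\cQ'([\phi_k-D,\phi_k+D])$, give $\phi^+_{\cQ'}(E)\le\phi_1+D=\phi^+_\cQ(E)+D$ and $\phi^-_{\cQ'}(E)\ge\phi_m-D=\phi^-_\cQ(E)-D$, and symmetrically with $\sigma$ and $\tau$ swapped. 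To verify the first containment: an object $E\in\cQ(\phi)$ decomposes as $E=\bigoplus_i E_i$ with $E_i\in\cQ_i(\phi)$; each nonzero $E_i$ is $\sigma_i$-semistable of phase $\phi$, so $d_{\rm{slice}}(\sigma_i,\tau_i)\le D$ forces all of its $\tau_i$-HN factors into $[\phi-D,\phi+D]$, i.e. $E_i\in\cQ'_i([\phi-D,\phi+D])\subseteq\cQ'([\phi-D,\phi+D])$, and since $\cQ'([\phi-D,\phi+D])$ is extension-closed, hence closed under the finite direct sum, $E$ lies there too. The second containment is proved identically, using that $\vec\tau$ is also strongly gluable.

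The only nontrivial input beyond \Cref{L:stronglygluedobjects} and routine manipulations of HN phases is the elementary translation between the slicing metric on a component and the containment $\cQ_i(\phi)\subseteq\cQ'_i([\phi-D,\phi+D])$; I expect the point requiring the most care to be keeping track of the distinction between $\cQ(\phi)$, which is only a direct sum of the $\cQ_i(\phi)$, and any single $\cQ_i(\phi)$, and invoking closure of $\cQ'([\phi-D,\phi+D])$ under direct sums at precisely that step. Everything else is bookkeeping.
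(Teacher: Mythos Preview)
Your proof is correct and follows essentially the same route as the paper's: both directions of the inequality are obtained exactly as you describe, with \Cref{L:stronglygluedobjects} providing the decomposition $\cQ(\phi)=\bigoplus_i\cQ_i(\phi)$ and the reverse inequality concluding via the containment criterion (the paper cites \cite[Lem.~6.1]{Br07} for this, which is precisely the ``standard reformulation'' you spell out). Your version is slightly more explicit in verifying the $\Hom^{\le 1}$ vanishing needed to invoke \Cref{L:stronglygluedobjects} and in noting that the symmetric containment $\cQ'(\phi)\subseteq\cQ([\phi-D,\phi+D])$ must also be checked, but otherwise the arguments coincide.
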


\begin{proof}
    For each $i$, denote the slicing of $\sigma_i$ (resp. $\tau_i)$ by $\cQ_i$ (resp. $\cR_i$). Denote the slicing of $\gl(\sigma_i)_{i=1}^n$ (resp. $\gl(\tau_i)_{i=1}^n$) by $\cQ$ (resp. $\cR$). We write $\delta = \max_i d_{\rm{slice}}(\sigma_i,\tau_i)$ and $d = d_{\rm{slice}}(\gl(\sigma_i)_{i=1}^n,\gl(\tau_i)_{i=1}^n).$
    \begin{align*}
        \delta &= \max_{i=1}^n\sup_{0\ne E\in \cC_i}\big\{\lvert \phi^-_{\cQ_i}(E)-\phi^-_{\cR_i}(E)\rvert,\lvert \phi_{\cQ_i}^+(E)-\phi_{\cR_i}^+(E)\rvert\big\}\\
        &\le \sup_{0\ne E\in \cC}\big\{\lvert \phi^-_{\cQ}(E)-\phi^-_{\cR}(E)\rvert,\lvert \phi_{\cQ}^+(E)-\phi_{\cR}^+(E)\rvert\big\} = d
\end{align*}
where the inequality comes from viewing $E\in \cC_i$ as an object of $\cC$.\endnote{The $\cQ$-HN filtration then agrees with the $\cQ_i$-HN filtration and likewise for $\cQ'$} Hence, $\delta \le d$. If $\delta =\infty$, we are done. So, suppose $\delta\in \bb{R}$. Any $E\in \cQ(\phi)$ is of the form $E = \bigoplus_{i=1}^n E_i$ for $E_i \in \cQ_i(\phi)$ by \Cref{L:stronglygluedobjects}. By hypothesis, $E_i\in \cR_i[\phi-\delta,\phi+\delta]$ for all $1\le i \le n$. Because $\cR_i[\phi-\delta,\phi+\delta]\subset \cR[\phi-\delta,\phi+\delta]$ and the latter is extension closed, we have $E\in \cR[\phi-\delta,\phi+\delta]$ also. Thus, by \cite[Lem. 6.1]{Br07} $\delta \ge d$.
\end{proof}

\begin{thm}
\label{P:rgluesurjective}
    For all $r > 1$, both $\cW_r \subset \prod_i \Stab(\cC_i)$ and $\gl(\cW_r) \subset \Stab(\cC)$ are open, and one has
\begin{equation}
\label{E:glueddescription}
    \gl(\cW_r) = \left\{\sigma \in \Stab(\cC) \left| \begin{array}{c} \exists r\text{-gluable } \vec{\tau} \in \prod_i \Stab(\cC_i) \\ \text{s.t. } d_{\rm{slice}}(\sigma, \gl(\vec{\tau})) < r-1 \end{array} \right. \right\}.
\end{equation}
    Furthermore, $\gl : \cW_r \to \gl(\cW_r)$ is a biholomorphism, with inverse given by $(Z,\cQ) \mapsto (Z_i, \cQ_i)_{i=1}^n$, where
    \begin{enumerate}
        \item $Z_i(E) := Z(\iota_i(E))$ for all $E \in \cC_i$; and  \vspace{2mm}
        \item $\cQ_i(\phi) := \cC_i \cap \cQ(\phi)$ for all $\phi \in \bR$.
    \end{enumerate}
    Finally, $\cW_r$ is nonempty if $\cC$ is smooth and proper and $\prod_i \Stab(\cC_i)$ is nonempty.
\end{thm}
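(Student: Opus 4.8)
\textbf{Plan.}  The statement naturally splits into four parts: (i) $\cW_r$ is open; (ii) the description \eqref{E:glueddescription} of $\gl(\cW_r)$ and the openness of $\gl(\cW_r)$; (iii) $\gl:\cW_r\to\gl(\cW_r)$ is a biholomorphism with the stated inverse formula; and (iv) non-emptiness of $\cW_r$.  I would treat them in that order, since each part feeds the next.

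\textbf{Openness of $\cW_r$.}  Fix $\vec\tau\in\cW_r$, witnessed by an $r$-gluable $\vec\sigma$ with $d_{\rm slice}(\sigma_i,\tau_i)<r-1$ for all $i$.  For any $\vec\tau'$ with $d_{\rm slice}(\tau_i,\tau'_i)$ small (which defines an open neighborhood of $\vec\tau$ in $\prod_i\Stab(\cC_i)$, since the slicing metric induces the manifold topology by \cite{Br07}), the triangle inequality gives $d_{\rm slice}(\sigma_i,\tau'_i)<r-1$, so the \emph{same} $\vec\sigma$ witnesses $\vec\tau'\in\cW_r$.  Hence $\cW_r$ is open.

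\textbf{The description \eqref{E:glueddescription} and openness of $\gl(\cW_r)$.}  The inclusion ``$\subseteq$'' is immediate: if $\vec\tau'\in\cW_r$ with witness $r$-gluable $\vec\tau$, then $\gl(\vec\tau')=\sigma$ satisfies $d_{\rm slice}(\sigma,\gl(\vec\tau))=d_{\rm slice}(\gl(\vec\tau'),\gl(\vec\tau))=\max_i d_{\rm slice}(\tau'_i,\tau_i)<r-1$ by \Cref{L:dslice}.  Conversely, given $\sigma\in\Stab(\cC)$ and $r$-gluable $\vec\tau$ with $d_{\rm slice}(\sigma,\gl(\vec\tau))<r-1$, I would define $\tau'_i=(Z_i,\cQ_i)$ by the formulas $Z_i(E):=Z(\iota_i(E))$ and $\cQ_i(\phi):=\cC_i\cap\cQ(\phi)$; the key technical point is that this $\tau'_i$ is a genuine stability condition on $\cC_i$, that $d_{\rm slice}(\tau_i,\tau'_i)<r-1$ (so $\vec\tau'\in\cW_r$), and that $\gl(\vec\tau')=\sigma$.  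To see $\tau'_i$ is well-defined: since $\gl(\vec\tau)=(Z_{\vec\tau},\cR)$ and $d_{\rm slice}(\sigma,\gl(\vec\tau))<r-1$, every object of $\cC_i\subset\cR(\phi)$ lies in $\cQ(\phi-r+1,\phi+r-1)$; combined with the $r$-gluability orthogonality \eqref{E:Homorth} for $\vec\tau$, one checks the $\cQ$-HN filtration of any $E\in\cC_i$ has factors in $\cC_i$, so $\cQ_i$ is a slicing and $\tau'_i\in\Stab_{\Lambda_i}(\cC_i)$ (the support property being inherited from $\sigma$ via the splitting $\Lambda=\bigoplus\Lambda_i$).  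Then $d_{\rm slice}(\tau_i,\tau'_i)\le d_{\rm slice}(\sigma,\gl(\vec\tau))<r-1$ follows by comparing HN filtrations inside $\cC$.  Finally $\gl(\vec\tau')=\sigma$ because both sides have central charge $Z$ and the same heart: $\cA_{\gl(\vec\tau')}=[\cQ_1(0,1],\ldots,\cQ_n(0,1]]$, which by \Cref{L:stronglygluedobjects} (applicable since $\vec\tau'$ is strongly gluable) equals $\bigoplus_i\cQ_i(0,1]=\cQ(0,1]$.  Openness of $\gl(\cW_r)$ then follows from \eqref{E:glueddescription}: it is a union over $r$-gluable $\vec\tau$ of the $d_{\rm slice}$-balls of radius $r-1$ around $\gl(\vec\tau)$, and $d_{\rm slice}$-balls are open.

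\textbf{Biholomorphism.}  The map $\gl$ is injective on $\cG$ (hence on $\cW_r$) since $\vec\tau$ is recovered from $\sigma=\gl(\vec\tau)$ by the formulas in (1), (2) — this is precisely the content of the previous paragraph, which shows the claimed inverse is a genuine two-sided inverse as set maps $\cW_r\leftrightarrow\gl(\cW_r)$.  For holomorphy in both directions: the projection $\Stab(\cC)\to\Hom(\Lambda,\bC)$ and each $\Stab(\cC_i)\to\Hom(\Lambda_i,\bC)$ are local biholomorphisms onto open subsets by the deformation property \cite{Bayer_short}, and under $\Hom(\Lambda,\bC)=\prod_i\Hom(\Lambda_i,\bC)$ the map on central charges induced by $\gl$ is the identity; since $\gl$ and its inverse are continuous (by \Cref{L:dslice}, $\gl$ is even an isometry for $d_{\rm slice}$, and continuity of the inverse follows from the explicit formulas together with continuity of HN truncation), both are compatible with these local charts, hence holomorphic.

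\textbf{Non-emptiness of $\cW_r$.}  Assume $\cC$ smooth and proper and pick any $\vec\rho=(\rho_1,\ldots,\rho_n)\in\prod_i\Stab(\cC_i)$, $\rho_i=(Z_i,\cR_i)$, with hearts $\cA_i=\cR_i(0,1]$.  By \Cref{P:hearthombound} applied to each pair $\langle\cC_i,\cC_j\rangle$ with $i<j$ inside $\cC$ (using that $\langle\cC_i,\cC_j\rangle$ is an SOD of a smooth proper idempotent complete category), there is $m\in\bZ$ with $\Hom^{\le m}_\cC(\cA_i^\heart,\cA_j^\heart)=0$ — in fact, shrinking, $\Hom^{\le m}_\cC(\cA_i,\cA_j)=0$ for all $i<j$.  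Now act on each $\rho_i$ by an element of $\bC$ (equivalently, rescale $Z_i$ and shift the slicing) to obtain $\sigma_i=(e^{w_i}Z_i,\cR_i^{w_i})$ whose hearts are ``spread apart'': choosing the imaginary shifts $\Im(w_i)/\pi$ so that the phase intervals occupied by $\cA_i$ after the shift are separated by more than $r+1$ in the appropriate order, the orthogonality $\Hom^{\le 0}_\cC(\cQ_i(-\epsilon-r,1+r],\cQ_j(-r,1+\epsilon+r))=0$ for $i<j$ reduces (after passing to the $m$-fold shift region where all relevant $\Hom$'s vanish) to the hypothesis just obtained from \Cref{P:hearthombound}.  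This makes $\vec\sigma$ $r$-gluable, and then $\vec\sigma$ itself witnesses $\vec\sigma\in\cW_r$ (taking $\vec\tau=\vec\sigma$, so $d_{\rm slice}=0<r-1$).  Hence $\cW_r\ne\emptyset$.

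\textbf{Main obstacle.}  The crux is the converse inclusion in \eqref{E:glueddescription} — specifically, verifying that for $\sigma$ close to $\gl(\vec\tau)$ the formulas $\cQ_i(\phi)=\cC_i\cap\cQ(\phi)$ actually produce a slicing of $\cC_i$ (the HN filtration of an object of $\cC_i$, computed in $\cC$, must have all its factors back in $\cC_i$).  This is where the $r$-gluability margin ``$-\epsilon-r,\ldots,1+\epsilon+r$'' is used, and it is the one step where the slack between $r$ and $1$ is essential rather than cosmetic; everything else is bookkeeping with \Cref{L:dslice}, \Cref{L:stronglygluedobjects}, and the deformation property.
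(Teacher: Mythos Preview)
Your proposal is correct and follows essentially the same route as the paper: the triangle-inequality argument for openness of $\cW_r$, the use of \Cref{L:dslice} for the ``$\subseteq$'' direction of \eqref{E:glueddescription}, the construction of the inverse via $\cQ_i(\phi)=\cC_i\cap\cQ(\phi)$ and the verification that $\sigma$-HN factors of objects of $\cC_i$ remain in $\cC_i$ (using both the SOD semiorthogonality for $k>i$ and the $r$-gluability margin \eqref{E:Homorth} for $k<i$), the local-biholomorphism argument via the central-charge charts from \cite{Bayer_short}, and the non-emptiness via \Cref{P:hearthombound} and $\bC$-shifts are all as in the paper. The only cosmetic difference is that you deduce openness of $\gl(\cW_r)$ from the description \eqref{E:glueddescription} as a union of $d_{\rm slice}$-balls, whereas the paper deduces it earlier from $\gl$ being a local biholomorphism; both are valid.
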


\begin{proof} 
For each $r\ge 1$, the condition $\Hom^{\le 0}(\cP_i(-\epsilon -r ,1+r],\cP_j(-r,1+\epsilon +r))$ is an open condition on slicings for each $i<j$. In particular, $\cG^s$ is an open subset of $\prod_{i} \Stab(\cC_i)$. 

Suppose given $\vec{\tau}\in \cW_r$. By definition, there exists an $r$-gluable $\vec{\sigma}$ such that $\max_i d_{\rm{slice}}(\sigma_i,\tau_i) = r-1 - \epsilon$ for some $\epsilon > 0$. Let $U$ be an open neighborhood of $\vec{\tau}$ such that for all $\vec{\eta}\in U$ one has $\max_i d_{\rm{slice}}(\eta_i,\tau_i)<\epsilon$. It follows from the triangle inequality that $U \subset \cW_r$ and hence that $\cW_r$ is open. 

It follows from \cite{Bayer_short}*{Thm. 1.2} that $\Stab(\cC)\to \Hom(\Lambda,\bC)$ given by $(Z,\cQ)\mapsto Z$ is a covering map onto an open subset of $\Hom(\Lambda,\bC)$. This gives local holomorphic coordinates on $\prod_{i=1}^n \Stab(\cC_i)$ and $\Stab(\cC)$ in which the map $\bigoplus_{i=1}^n \Hom(\Lambda_i,\bC)\to \Hom(\Lambda,\bC)$ induced by $\gl$ is $(Z_1,\ldots, Z_n)\mapsto Z_1\oplus \cdots \oplus Z_n$. In particular, $\gl$ is a local biholomorphism and $\gl(\cW_r)$ is open.

Denote the right hand side of \eqref{E:glueddescription} by $\Sigma$. If $\sigma = \gl(\vec{\sigma})$ for $\vec{\sigma}\in \cW_r$, then there exists an $r$-gluable $\vec{\tau}$ such that $\max_i d_{\rm{slice}}(\sigma_i,\tau_i) < r-1$. By \Cref{L:dslice}, $d_{\rm{slice}}(\sigma,\gl(\vec{\tau})) < r-1$ and $\gl(\cW_r)\subseteq \Sigma$ follows. To prove the $\supseteq$ containment, we use the inverse map to $\gl$. 

Suppose given $\sigma = (Z,\cQ)$ such that there exists an $r$-gluable $\vec{\tau}\in \prod_i \Stab(\cC_i)$ and such that $d_{\rm{slice}}(\sigma,\gl(\vec{\tau})) < r-1$. Put $\vec{\tau} = (Z_i',\cQ_i')_{i=1}^n$ and $\gl(\vec{\tau}) = (Z',\cQ')$. For each $i$, let $\cQ_i = \{\cQ_i(\phi)\}_{\phi \in \bR}$ where $\cQ_i(\phi):= \cQ(\phi)\cap \cC_i$. By hypothesis, $\cA := \cQ(0,1] \subset \cQ'(I)$ for $I\subset (-\epsilon -r,r+\epsilon]$. So, $\Hom^{\le 1}(\cQ_i'(I),\cQ_j'(I)) = 0$ for all $i<j$ by the $r$-gluability of $\vec{\tau}$. Therefore, $\cA \subset \cQ'(I) = \bigoplus_{i=1}^n \cQ_i'(I)$.

 We claim $\cQ_i$ defines a slicing on $\cC_i$. The only property not immediate is the existence of HN filtrations for all $X\in \cC_i$.\endnote{Being the intersection of full additive subcategories of $\cC$, $\cQ_i(\phi)$ is also additive. $\cQ_i(\phi)[1] = (\cC_i\cap \cQ(\phi))[1] = \cC_i\cap \cQ(\phi+1)$. Similarly, $\Hom(\cQ(\phi_1) ,\cQ(\phi_2)) = 0$ for $\phi_1>\phi_2$ and this implies the result for $\cQ_i$.} Since $\sigma$-HN filtrations are constructed by concatenating the filtrations in shifts of $\cA$, we may suppose $X\in \cA$. In $\cA$, the $\sigma$-HN filtration is of the form $0 = E_0\subset E_1\subset \cdots \subset E_n = X$ where for each $j$ one has $E_j = E_j^1\oplus \cdots \oplus E_j^n$ for $E_j^i\in \cQ_i'(I)$. However, $\Hom(E_j^k,X) =0$ for all $k\ne i$.\endnote{As $X\in \cC_i$, $\Hom(E_j^k,X)=0$ for $k>i$ by the semiorthogonality condition. On the other hand, for $k<i$, $\Hom(\cQ_k(I),\cQ_i(I)) = 0$.} So, $E_j = E_j^i \in \cC_i$. Thus, all of the subquotients are in $\cC_i$ as well, being cones of morphisms in $\cC_i$, and $\cQ_i$ defines a slicing on $\cC_i$ for each $i$.

$Z_i(v(E)) = Z(v(\iota_i(E)))$, where $v:K_0(\cC)\twoheadrightarrow \Lambda$ is the fixed surjection, so $Z_i$ factors through $K_0(\cC_i) \twoheadrightarrow \Lambda_i$. $(Z_i,\cQ_i)$ satisfies the support property for $\Lambda_i$ because $\cQ_i(\phi)\subset \cQ(\phi)$ for each $\phi$.\endnote{Let $\lVert \:\cdot\:\rVert$ denote a fixed norm on $\Lambda_{\bR}$. The support property is equivalent to the condition that 
\[
    \inf_{E\in \cQ} \frac{\lvert Z_{\sigma'}(E)\rvert}{\lVert v(E)\rVert} \ge C 
\]
for some $C > 0$. Restricting $\lVert \:\cdot\:\rVert$ gives a norm on $\Lambda_{i,\bR}$ for each $i$ and the inclusion $\cQ_i(\phi)\subset \cQ(\phi)$ for each $i$ implies 
\[
    \inf_{E\in \cQ_i} \frac{\lvert Z_{\sigma'}(E)\rvert}{\lVert v(E)\rVert} \ge C
\]
as desired.} 
In particular, $(Z,\cQ)\mapsto (Z_i,\cQ_i)_{i=1}^n$ defines a map $u:\Sigma \to \prod_{i}\Stab(\cC_i)$. One can verify that given $\sigma \in \Sigma$, one has $u(\sigma) \in \cG^s$ and also that $u$ and $\gl$ are mutually inverse. Furthermore, $u(\Sigma) \subset \cW_r$ and it follows that $\Sigma \subset \gl(\cW_r)$, and thus we have $\Sigma = \gl(\cW_r)$, as claimed.

Finally, if $\cC$ is smooth and proper and $\prod_i \Stab(\cC_i)$ is nonempty, consider $\vec{\sigma} = (\sigma_1,\ldots, \sigma_n)$ and define $\vec{\sigma}_t = (\sigma_{1,t},\ldots, \sigma_{n,t})$ for all $t\ge 0$, where $\sigma_{k,t} = e^{ik\pi t}\cdot \sigma_k$ for each $1\le k \le n$. The argument of the proof of \Cref{L:Limitgluing} shows that for any fixed $r\ge 1$, there exists $t(r)$ such that $\vec{\sigma}_t$ is $r$-gluable for all $t\ge t(r)$. In particular, $\cW_r$ is nonempty. 
\end{proof}

\subsection{Gluing and quasi-convergence} 

We show that certain quasi-convergent paths arise from gluing for all $t$ sufficiently large. We also prove a sort of converse to \Cref{P:ImSOD}, showing that under conditions satisfied in practice polarizable semiorthogonal decompositions always arise from a quasi-convergent path in $\Stab(\cC)$.

\subsubsection{Many quasi-convergent paths are eventually glued}

\begin{setup}
\label{S:eventuallyglued}
    Let $\cC$ be a smooth and proper idempotent complete pre-triangulated dg-category and fix a quasi-convergent path $\sigma_\bullet:[0,\infty)\to \Stab(\cC)$ such that $\sim$ and $\isim$ are equivalent relations on $\cP := \cP_{\sigma_\bullet}$. We fix the following notation:
    \begin{itemize}
        \item $S = \{E_1,\ldots, E_n\}\subset \lss$ is a subset such that $S\to \lss/{\isim}$ is a bijection and $i<j \Rightarrow E_i\iprec E_j$. \vspace{2mm}
        \item $\cC_{i} := \cC^{E_i}$ for each $i$. \vspace{2mm}
        \item $\tau_{i,t} := \logZ_t(E_i) \cdot \sigma_{E_i}$, where $\sigma_{E_i}$ is the pre-stability condition on $\cC_i$ from \Cref{T:prestabilityonquotient} and \Cref{R:phantoms}(1). \vspace{2mm}
        \item $\vec{\tau}_t = (\tau_{1,t},\ldots,\tau_{n,t}) : [0,\infty) \to \prod_i \Stab(\cC_i)$.
    \end{itemize}
\end{setup}

We show that subject to mild hypotheses, $\sigma_t$ is in the image of the gluing map for sufficiently large $t$.

\begin{lem}
\label{L:Limitgluing}
In \Cref{S:eventuallyglued}, $\forall r \ge 1$ there exists $t(r)$ such that $t\ge t(r)$ implies that $\vec{\tau}_t$ is $r$-gluable.
\end{lem}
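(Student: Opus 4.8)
The plan is to exploit the fact that the rescalings $\tau_{i,t}=\logZ_t(E_i)\cdot\sigma_{E_i}$ push the hearts of the factors $\cC^{E_i}$ apart in the $\bC$-direction as $t\to\infty$, so that the $\Hom$-vanishing condition defining $r$-gluability becomes automatic for $t$ large. Two ingredients drive the argument. The first is a \emph{uniform} homological estimate: an integer $m$ with $\Hom_\cC^{\le m}(\cA_i,\cA_j)=0$ for all $i<j$, where $\cA_i:=\cP_{E_i}(0,1]$ is the heart of the bounded $t$-structure attached to the prestability condition $\sigma_{E_i}$ on $\cC^{E_i}$ (genuinely a prestability condition here because $\sim$ and $\isim$ coincide, by \Cref{R:phantoms}). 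The second is the estimate $\theta_t(E_j)-\theta_t(E_i)\to+\infty$ for $i<j$, where $\theta_t(E_i):=\Im(\logZ_t(E_i))/\pi$: by definition $E_i\iprec E_j$ means $\phi_t(E_j)-\phi_t(E_i)\to+\infty$, and since each $E_i$ is limit semistable, \Cref{L:logZcomp} gives $\theta_t(E_i)-\phi_t(E_i)\to 0$, so the same divergence holds for the $\theta_t$.

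To obtain the uniform homological estimate I would argue exactly as in \Cref{P:hearthombound}: since $\cC$ is smooth and proper, each factor $\cC^{E_i}$ of the SOD $\cC=\langle\cC^{E_1},\dots,\cC^{E_n}\rangle$ (\Cref{P:ImSOD}) is admissible, idempotent complete, and smooth and proper. Fixing $i<j$, the left adjoint $L_j$ of the inclusion $\iota_j:\cC^{E_j}\hookrightarrow\cC$ gives a natural isomorphism $\Hom_\cC^k(\iota_i X,\iota_j Y)\cong \Hom_{\cC^{E_j}}^k(L_j\iota_i X,Y)$; by \Cref{P:Boundedtamp} the exact functor $L_j\circ\iota_i:\cC^{E_i}\to\cC^{E_j}$ has bounded $t$-amplitude, so $L_j\iota_i(\cA_i)$ is cohomologically bounded and the right-hand side vanishes for $X\in\cA_i$, $Y\in\cA_j$ once $k$ is sufficiently negative. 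Taking the minimum of these degree bounds over the finitely many pairs $i<j$ produces the required integer $m$.

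For the main step, the $\bC$-action formula of \Cref{S:background} identifies the slicing $\cQ_i$ of $\tau_{i,t}$ with $\cP_{E_i}$, all of whose phases are shifted by $\theta_t(E_i)$. Hence, taking $\epsilon=1$, the category $\cQ_i(-1-r,1+r]$ consists of iterated extensions of objects in shifts $\cA_i[a]$ with $a$ ranging over an interval of integers (of length depending only on $r$) centred near $-\theta_t(E_i)$, and likewise $\cQ_j(-r,2+r)$ is built from shifts $\cA_j[b]$ with $b$ near $-\theta_t(E_j)$. For such $a,b$ one gets $b-a\le C(r)-(\theta_t(E_j)-\theta_t(E_i))$ for a constant $C(r)$, which tends to $-\infty$; so there is $t(r)$ with $b-a\le m$ for all relevant $a,b$ and all $i<j$ whenever $t\ge t(r)$. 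Since $\Hom_\cC^k(\cA_i[a],\cA_j[b])=\Hom_\cC^{\,b+k-a}(\cA_i,\cA_j)$, the uniform estimate forces $\Hom_\cC^{\le 0}(\cA_i[a],\cA_j[b])=0$, and the long exact sequences of the two extension filtrations upgrade this to $\Hom_\cC^{\le 0}(\cQ_i(-1-r,1+r],\cQ_j(-r,2+r))=0$ for all $i<j$, which is precisely the $r$-gluability of $\vec{\tau}_t$ with $\epsilon=1$. The genuinely delicate point is the uniform homological estimate, and this is where smooth-properness is indispensable: the growing phase gap $\theta_t(E_j)-\theta_t(E_i)\to\infty$ can annihilate the relevant $\Hom$-groups only because they all vanish below one fixed degree. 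A secondary technicality is that the windows $\cQ_i(-1-r,1+r]$ drift with $t$, tracking $-\theta_t(E_i)$, which may itself run off to $\pm\infty$; for this reason the argument should be phrased throughout in terms of the fixed hearts $\cA_i$ of $\sigma_{E_i}$ and their shifts, rather than by comparing phases back to the path $\sigma_\bullet$.
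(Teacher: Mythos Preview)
Your proposal is correct and follows essentially the same approach as the paper: obtain a uniform integer bound $m$ via \Cref{P:hearthombound} applied to each pair $i<j$, express the relevant slices $\cQ_{i,t}(\phi)$ as shifts of the fixed hearts $\cA_i=\cP_{E_i}(0,1]$, and use the divergence $\phi_t(E_j)-\phi_t(E_i)\to\infty$ (equivalently your $\theta_t(E_j)-\theta_t(E_i)\to\infty$) to force the required $\Hom^{\le 0}$-vanishing for $t$ large. Your use of $\theta_t(E_i)=\Im(\logZ_t(E_i))/\pi$ rather than $\phi_t(E_i)$ is in fact slightly more precise than the paper's phrasing, though the two quantities are asymptotically equal since each $E_i$ is limit semistable.
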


\begin{proof}
Put $\cQ_{E_i}(0,1] := \cA_i$ for each $i$. By \Cref{P:hearthombound}, for all pairs $i<j$ there exists an $n_{ij} \in \bZ$ such that $\Hom^{\le n_{ij}}(\cA_i,\cA_j) = 0$. Set $n = \max_{i<j} n_{ij}$. Let $\phi \in (-\epsilon-r,1+r +\epsilon) = I$ and $k\in \bZ$ with $k\le 1$ be given. For each $i$, $\cQ_{i,t}(\phi) = \cQ_{E_i}(\phi - \phi_t(E_i))$ and so there is a unique $n_i(t,\phi)\in \bZ$ such that $\cQ_{i,t}(\phi)[n_i(t,\phi)]\subset \cQ_{E_i}(0,1]$. 

For $i<j$, $\lim_t \phi_t(E_j) - \phi_t(E_i) = \infty$ and so $\lim_{t\to\infty} n_j(t,\phi) - n_i(t,\phi) = \infty$. Choose $t(r)$ sufficiently large that $t\ge t(r)$ implies $\max_{\phi \in I, i<j} \{k + n_i(t,\phi) - n_j(t,\phi) \} \le n$. As a consequence, for all $i<j$ and $\phi_i,\phi_j \in I$, $\Hom^{\le n}(\cA_{i},\cA_j) = 0$ implies that $\Hom^k(\cQ_{i,t}(\phi_i),\cQ_{j,t}(\phi_j)) = 0$ and so \eqref{E:Homorth} holds. 
\end{proof}

\begin{defn} \label{D:rho}
Let  $\rho(t) = \sup\{r-1 | \vec{\tau}_t \text{ is }r\text{-gluable}\}.$
\end{defn}

If $\rho(t)>0$, so that $\vec{\tau}_t$ is strongly gluable, and $d_{\rm{slice}}(\sigma_t, \gl(\vec{\tau}_t)) < \rho(t)$, then $\sigma_t \in \gl(\cW_r)$ for all $r\in (d_{\rm{slice}}(\sigma_t, \gl(\vec{\tau}_t))+1,\rho(t)+1)$ by \Cref{P:rgluesurjective}.\endnote{We prove $\sigma_t\in \gl(\cW_r)$ by showing that $d_{\rm{slice}}(\sigma_t, \gl(\vec{\tau}_t))<r-1$, where we note that $\vec{\tau}_t$ is $r$-gluable for all $r<\rho(t)+1$. By hypothesis, $r>d_{\rm{slice}}(\sigma_t, \gl(\vec{\tau}_t))+1$ so that $d_{\rm{slice}}(\sigma_t, \gl(\vec{\tau}_t))<r-1$.} Moreover, it follows from \Cref{L:Limitgluing} that $\rho(t)$ mono\-tonically increases to $\infty$ as $t\to\infty$.

\begin{lem}
\label{L:eventuallyglued}
In \Cref{S:eventuallyglued}, suppose there exists $t_0\in \bb{R}$ such that
\[
    \sup\{\lvert \phi_{\sigma_t}^\pm(F) - \phi_{\tau_{i,t}}(F) \rvert: i \in \{1,\ldots,n\}, F\in \lss \cap \cC_i\} < \rho(t)
\]
$\forall t\ge t_0$. Then $\sigma_t \in \gl(\cG^s)$ for all $t\ge t_0$.
\end{lem}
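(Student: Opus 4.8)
The plan is to verify that for every $t \ge t_0$ the glued stability condition $\gl(\vec{\tau}_t)$ is defined and satisfies $d_{\rm{slice}}(\sigma_t, \gl(\vec{\tau}_t)) < \rho(t)$, and then to read off $\sigma_t \in \gl(\cG^s)$ from the description of $\gl(\cW_r)$ in \Cref{P:rgluesurjective}.

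Fix $t \ge t_0$ and put $M := \sup\{\lvert \phi^\pm_{\sigma_t}(F) - \phi_{\tau_{i,t}}(F)\rvert : 1\le i\le n,\ F \in \lss \cap \cC_i\}$, so that $M < \rho(t)$ by hypothesis. The defining set is nonempty (it contains the terms for $F = E_1$), so $M \ge 0$ and hence $\rho(t) > 0$; in particular $\vec{\tau}_t$ is strongly (i.e. $1$-)gluable, so $\sigma_t' := \gl(\vec{\tau}_t) = (Z', \cQ) \in \Stab(\cC)$ is defined. First I would record that $1$-gluability gives $\Hom^1_{\cC}(\cQ_{i,t}(0,1], \cQ_{j,t}(0,1]) = 0$ for $i < j$ (a special case of \eqref{E:Homorth} with $r = 1$, using that its target interval is enlarged by $\epsilon$), where $\cQ_{i,t}$ is the slicing of $\tau_{i,t}$; together with the heart condition $\Hom^{\le 0}_\cC(\cQ_{i,t}(0,1],\cQ_{j,t}(0,1])=0$ this makes \Cref{L:stronglygluedobjects} applicable, giving $\cQ(\psi) = \bigoplus_{i=1}^n \cQ_{i,t}(\psi)$ for all $\psi \in \bR$.

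The crux is the estimate $d_{\rm{slice}}(\sigma_t, \sigma_t') \le M$. By \cite[Lem.~6.1]{Br07} it suffices to show $\cQ(\psi) \subseteq \cP_{\sigma_t}[\psi - M, \psi + M]$ for every $\psi$, where $\cP_{\sigma_t}[a,b]$ denotes the objects whose $\sigma_t$-HN factors all have phase in $[a,b]$. Given $X \in \cQ(\psi)$, the direct-sum decomposition above writes $X \cong \bigoplus_i X_i$ with $X_i \in \cQ_{i,t}(\psi) \subseteq \cC_i$. Each nonzero $X_i$ is $\tau_{i,t}$-semistable of phase $\psi$; since $\tau_{i,t} = \logZ_t(E_i) \cdot \sigma_{E_i}$ and — because $\sim$ and $\isim$ agree — the $\sigma_{E_i}$-semistable objects of $\cC_i = \cC^{E_i}$ are exactly the limit semistable objects $\sim E_i$ (by \Cref{T:prestabilityonquotient}, \Cref{R:phantoms}(1) and \Cref{L:eventual t-structure}), we deduce $X_i \in \lss \cap \cC_i$ and $\phi_{\tau_{i,t}}(X_i) = \psi$. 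The defining bound of $M$ then gives $X_i \in \cP_{\sigma_t}[\psi - M, \psi + M]$, and since $\phi^+_{\sigma_t}(X) = \max_i \phi^+_{\sigma_t}(X_i)$ and $\phi^-_{\sigma_t}(X) = \min_i \phi^-_{\sigma_t}(X_i)$ over the nonzero summands, also $X \in \cP_{\sigma_t}[\psi - M, \psi + M]$.

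To finish, invoke \Cref{P:rgluesurjective}: since $\rho(t) > M \ge 0$, the interval $(\max(1, M+1),\, \rho(t)+1)$ is nonempty, so pick $r$ in it. Then $r > 1$, the tuple $\vec{\tau}_t$ is $r$-gluable (because $r - 1 < \rho(t)$ and $s$-gluability implies $s'$-gluability whenever $1 \le s' \le s$), and $d_{\rm{slice}}(\sigma_t, \gl(\vec{\tau}_t)) \le M < r - 1$; by the description \eqref{E:glueddescription} of $\gl(\cW_r)$ this gives $\sigma_t \in \gl(\cW_r) \subseteq \gl(\cG^s)$. As $t \ge t_0$ was arbitrary, the lemma follows. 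The hard part will be the slicing-distance estimate, and in particular the realisation that it must be run in this direction: a $\gl(\vec{\tau}_t)$-semistable object genuinely splits into \emph{limit} semistable pieces on which the hypothesis applies directly, whereas a $\sigma_t$-semistable object need not be limit semistable for finite $t$, so the reverse inclusion is only obtained afterwards, for free, from \cite[Lem.~6.1]{Br07}. A secondary technical point is the $\Hom^{\le 1}$-vanishing required by \Cref{L:stronglygluedobjects}.
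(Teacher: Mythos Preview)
Your proof is correct and follows essentially the same approach as the paper: decompose a $\gl(\vec{\tau}_t)$-semistable object via \Cref{L:stronglygluedobjects} into limit semistable summands in each $\cC_i$, apply the hypothesis to each summand to bound $d_{\rm{slice}}(\sigma_t,\gl(\vec{\tau}_t))$, and conclude via \Cref{P:rgluesurjective}. You are somewhat more explicit than the paper about why $\rho(t)>0$, about the $\Hom^{\le 1}$-vanishing needed for \Cref{L:stronglygluedobjects}, and about selecting $r$, but the argument is the same.
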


\begin{proof}
By the discussion after \Cref{D:rho} and the fact that $\cW_r \subset \cG^s$ for all $r$, it suffices to show that $d_{\rm{slice}}(\gl(\vec{\tau}_t),\sigma_t) < \rho(t)$ for all $t\geq t_0$.

By \Cref{L:stronglygluedobjects}, any $\gl(\vec{\tau}_t)$-semistable object of phase $\psi$ has the form $G = G_1\oplus \cdots \oplus G_n$ for some $G_i\in \cQ_{E_i}(\psi - \phi_t(E_i)) \subset \lss \cap \cC_i$. Also, by definition, $\psi = \phi_{E_i}(G_i) + \phi_t(E_i) = \phi_{\tau_i,t}(G_i)$ for all $i$. 

Next, there exists $i$ such that $\phi_{\sigma_t}^+(G_i)=\phi_{\sigma_t}^+(G)$, so $\lvert \phi_{\sigma_t}^+(G) - \psi\rvert = \lvert \phi_{\sigma_t}^+(G_i) - \phi_{\tau_i,t}(G_i)\rvert <\rho(t)$. Similarly, $\lvert \phi^-_{\sigma_t}(G) - \psi\rvert <\rho(t)$, and therefore $d_{\rm{slice}}(\gl(\vec{\tau}_t),\sigma_t)< \rho(t)$ for all $t\ge t_0$. 
\end{proof}

\begin{prop}
In \Cref{S:eventuallyglued}, if $\sigma_\bullet$ satisfies the support property (\Cref{D:supportpropertypath}) and \begin{equation}
\label{E:uniform_bound} \limsup_{t \to \infty} \left( \sup_{F \in \lss} \left( \phi^+_{\sigma_t}(F) - \phi^-_{\sigma_t}(F) \right) \right) < 1.
\end{equation}
then $\sigma_t \in \gl(\cG^s)$ for all $t \gg 0$.
\end{prop}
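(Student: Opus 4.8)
The plan is to verify the hypothesis of \Cref{L:eventuallyglued}. Since $\rho$ increases monotonically to $\infty$ (see the discussion after \Cref{D:rho}, which uses \Cref{L:Limitgluing}), it suffices to find a $t$-independent constant $M$ with
\[
\sup\bigl\{\,\lvert \phi_{\sigma_t}^\pm(F)-\phi_{\tau_{i,t}}(F)\rvert \,:\, 1\le i\le n,\ F\in\lss\cap\cC_i\,\bigr\}\le M
\]
for all $t\gg 0$; one then picks $t_0$ with $\rho(t)>M$ for $t\ge t_0$, and \Cref{L:eventuallyglued} gives $\sigma_t\in\gl(\cG^s)$ for $t\ge t_0$. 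Note that requiring $\sigma_\bullet$ to satisfy the support property of \Cref{D:supportpropertypath} presupposes it is numerical, so \Cref{L:linearalgebra} and \Cref{T:stabilityonquotient} are available.

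Fix $i$ and $F\in\lss\cap\cC_i$. Because $\sim\,=\,\isim$ we have $F\sim E_i$, so by \Cref{L:eventual t-structure} and \Cref{R:phantoms}(1) the object $F$ is $\sigma_{E_i}$-semistable of phase $\phi^{E_i}(F)=\lim_{s\to\infty}\bigl(\phi_s(F)-\phi_s(E_i)\bigr)$, hence $\tau_{i,t}$-semistable of phase $\phi_{\tau_{i,t}}(F)=\phi^{E_i}(F)+\theta_i(t)$, where $\theta_i(t):=\tfrac1\pi\Im\logZ_t(E_i)\in[\phi_t^-(E_i),\phi_t^+(E_i)]$. By \eqref{E:uniform_bound} fix $c<1$ and $t_1$ with $\phi_t^+(G)-\phi_t^-(G)<c$ for all $G\in\lss$ and $t\ge t_1$; by \Cref{L:logZcomp}, for $t\ge t_1$ the quantities $\phi_t^\pm(F)$, $\theta_F(t):=\tfrac1\pi\Im\logZ_{\sigma_t}(F)$ and $\phi_t(F)$ lie pairwise within $c$, and similarly for $E_i$ and $\theta_i(t)$. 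Setting $g_F(t):=\theta_F(t)-\theta_i(t)-\phi^{E_i}(F)$, one gets $\lvert\phi_{\sigma_t}^\pm(F)-\phi_{\tau_{i,t}}(F)\rvert\le c+\lvert g_F(t)\rvert$, so it is enough to prove a uniform bound $\lvert g_F(t)\rvert<1/6$ for $t\gg 0$.

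The crux, and the main obstacle, is that $Z_t(F)/Z_t(E_i)\to Z_{E_i}(F)$ \emph{uniformly} in $F\in\lss\cap\cC_i$. Numericality makes $\Lambda_i:=v(\cC_i)$ of finite rank, and a short check shows $\Lambda_{E_i}=\Lambda_i$ when $\sim\,=\,\isim$ (here $\cC_{\preceq E_i}=\lss(\ipreceq E_i)$ and $\cC_{\prec E_i}=\lss(\iprec E_i)$, so \Cref{L:linearalgebra} identifies the quotient with $v(\cC^{E_i})$, which is already free). The linear functionals $Z_t(-)/Z_t(E_i)$ on the finite-dimensional space $\Lambda_i\otimes\bC$ converge to $Z_{E_i}$ pointwise: any class in $\Lambda_i$ is $v$ of a $\bZ$-combination of objects of $\cC_i$, each of whose limit HN factors lies in $\lss\cap\cC_i$ and is $\sim E_i$; for such factors $Z_t(-)/Z_t(E_i)\to Z_{E_i}$ by \Cref{T:prestabilityonquotient}(2), and additivity gives the general case. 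Hence $\bigl\lVert Z_t(-)/Z_t(E_i)-Z_{E_i}\bigr\rVert_{\mathrm{op}}\to 0$. By \Cref{T:stabilityonquotient}(2), $\sigma_{E_i}$ has the support property for $\Lambda_{E_i}=\Lambda_i$, so $\lvert Z_{E_i}(F)\rvert\ge C_i\lVert v(F)\rVert$ for some $C_i>0$; since $v(F)\ne 0$ (as $Z_{E_i}(F)\ne 0$, $F$ being $\sigma_{E_i}$-semistable), dividing gives
\[
\Bigl\lvert\tfrac{Z_t(F)/Z_t(E_i)}{Z_{E_i}(F)}-1\Bigr\rvert\ \le\ \tfrac{1}{C_i}\,\bigl\lVert Z_t(-)/Z_t(E_i)-Z_{E_i}\bigr\rVert_{\mathrm{op}}\ \xrightarrow[t\to\infty]{}\ 0
\]
uniformly in $F$. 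Taking the maximum over the finitely many $i$, there is $t_2$ with this $<1/2$ for $t\ge t_2$ and all such $F$, so the argument of the ratio lies within $\pi/6$ of $0$ modulo $2\pi$; since $\arg Z_t(F)\equiv\pi\theta_F(t)$, $\arg Z_t(E_i)\equiv\pi\theta_i(t)$ and $\arg Z_{E_i}(F)\equiv\pi\phi^{E_i}(F)$ modulo $2\pi$, this says $g_F(t)\in\bigcup_{k\in\bZ}\bigl(2k-\tfrac16,\,2k+\tfrac16\bigr)$ for $t\ge\max(t_1,t_2)$.

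Finally, $g_F$ is continuous in $t$ (as $\sigma_\bullet$, hence the phase functions $\phi_t^\pm$ and the $\theta_F,\theta_i$, are continuous on $[t_1,\infty)$), so on the connected set $[\max(t_1,t_2),\infty)$ it stays in one component $(2k_F-\tfrac16,\,2k_F+\tfrac16)$; and $\theta_F(s)-\theta_i(s)\to\lim_s(\phi_s(F)-\phi_s(E_i))=\phi^{E_i}(F)$ forces $g_F(s)\to 0$, hence $k_F=0$. Therefore $\lvert g_F(t)\rvert<1/6$ for all $t\ge\max(t_1,t_2)$, uniformly in $i$ and $F$, giving $\lvert\phi_{\sigma_t}^\pm(F)-\phi_{\tau_{i,t}}(F)\rvert<c+\tfrac16=:M$, and the reduction in the first paragraph finishes the proof. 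The hardest part is exactly this uniformity: upgrading the objectwise convergence $Z_t(F)/Z_t(E_i)\to Z_{E_i}(F)$ to one uniform over the infinitely many $F\in\lss\cap\cC_i$—where finiteness of $\rk\Lambda_i$ and the support property of $\sigma_{E_i}$ are essential—together with the connectedness argument eliminating the residual $2\bZ$ ambiguity in the phases.
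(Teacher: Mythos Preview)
Your proof is correct and follows essentially the same route as the paper's: reduce via \Cref{L:eventuallyglued} to a uniform bound on $\lvert\phi_{\sigma_t}^\pm(F)-\phi_{\tau_{i,t}}(F)\rvert$, pass by the triangle inequality to $g_F(t)=\theta_F(t)-\theta_i(t)-\phi^{E_i}(F)$, then use finite-dimensionality of $\Lambda_i$ and the support property to show $Z_t(F)/(Z_t(E_i)Z_{E_i}(F))\to 1$ uniformly in $F$, and finally use continuity plus pointwise convergence $g_F(t)\to 0$ to select the correct branch. The paper's argument is the same in substance; your version is more explicit about two points the paper leaves terse, namely the identification $\Lambda_{E_i}=\Lambda_i$ and the connectedness argument eliminating the $2\bZ$ ambiguity after passing through the exponential covering map.
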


\begin{proof}
By \Cref{L:eventuallyglued}, it suffices to show that for each $i$, $|\phi^\pm_{\sigma_t}(F) - \phi_{\tau_i,t}(F)| = |\phi^\pm_{\sigma_t}(F) -\phi_{\sigma_t}(E_i)-\phi_{E_i}(F)|$ has an upper bound that is uniform over all $F \in \lss \cap \cC_i$ and all $t$ sufficiently large. The hypothesis \eqref{E:uniform_bound} implies the hypothesis of \Cref{L:logZcomp} for $t\gg 0$, so there is a unique $\theta_t(F) \in [\phi^-_{\sigma_t}(F), \phi^+_{\sigma_t}(F)]$ such that $Z_{\sigma_t}(F) \in \bR_{>0} e^{i \pi \theta_t(F)}$, and $\lim_{t \to \infty} (\phi_{\sigma_t}(F)-\theta_t(F)) = 0$. The triangle inequality combined with \eqref{E:uniform_bound} then shows that it suffices to find a uniform upper bound on $|\theta_t(F)-\theta_{t}(E_i)-\phi_{E_i}(F)|$.\endnote{Specifically, the triangle inequality gives \begin{gather*}|\phi^\pm_{\sigma_t}(F) -\phi_{\sigma_t}(E_i)-\phi_{E_i}(F)| \\ \leq |\phi^\pm_{\sigma_t}(F)-\theta_{t}(F)| + |\theta_t(F) - \theta_t(E_i) - \phi_{E_i}(F)| + |\phi_{\sigma_t}(E_i)-\theta_t(E_i)| \\ \leq |\phi^+_{\sigma_t}(F)-\phi^-_{\sigma_t}(F)| + |\theta_t(F) - \theta_t(E_i) - \phi_{E_i}(F)| + |\phi_{\sigma_t}(E_i)-\theta_t(E_i)|,\end{gather*}where the second inequality uses the fact that $\phi_{\sigma_t}^-(F) \leq \theta_t(F) \leq \phi_{\sigma_t}^+(F)$. The third term in the final expression is independent of $F$ and converges to zero, hence has an upper bound that is uniform over $F$. The first term in that expression is precisely what is bounded above by \eqref{E:uniform_bound}.}

Note that $e(-) := \exp(i\pi (-)) : \bR \to S^1$ is a covering map, and for each individual $F \in \lss \cap \cC_i$, $\theta_t(F)-\theta_{t}(E_i)-\phi_{E_i}(F)$ is a continuous function of $t$ that converges to $0$ as $t \to \infty$. To show that this convergence is uniform over $F$, it suffices to show that the convergence $e(\theta_t(F)-\theta_{t}(E_i)-\phi_{E_i}(F)) \to 1$ is uniform over $F$.

The quantity $e(\theta_t(F)-\theta_{t}(E_i)-\phi_{E_i}(F))$ is the normalization of $Z_{\sigma_t}(F) / (Z_{\sigma_t}(E_i) Z_{E_i}(F))$, so it suffices to show that \[\lim_{t \to \infty} \sup_{F \in \lss \cap \cC_i} \left| \frac {Z_{\sigma_t}(F)}{Z_{\sigma_t}(E_i) Z_{E_i}(F)} - 1 \right| = 0.\]Using the support property, we can bound the quantity as follows:\begin{equation}\label{E:support_property_bound}\left| \frac {Z_{\sigma_t}(F)}{Z_{\sigma_t}(E_i) Z_{E_i}(F)} - 1 \right| \leq \frac{1}{\epsilon_{E_i}} \left| \frac {Z_{\sigma_t}(\widehat{v(F)})}{Z_{\sigma_t}(E_i)} - Z_{E_i}(\widehat{v(F)}) \right|,\end{equation}where $\widehat{v(F)} := v(F) / \lVert v(F) \rVert$ is the normalized Mukai vector of $F$, and $\epsilon_{E_i}>0$ is the constant appearing in \Cref{D:supportpropertypath}. By definition, $Z_{E_i}(-) = \lim_{t \to \infty} Z_{\sigma_t}(-) / Z_{\sigma_t}(E_i)$ in the finite dim\-ensional real vector space $\Hom(\Lambda_i,\bC)$, so $Z_{\sigma_t}(x)/Z_{\sigma_t}(E_i)$ converges to $Z_{E_i}(x)$ uniformly for $x$ in the unit sphere in $\Lambda_i \otimes \bR$. Therefore, the right-hand-side of \eqref{E:support_property_bound} converges to $0$ uniformly over $F \in \cP \cap \cC_i$.
\end{proof}

\subsubsection{Quasi-convergent paths recover semiorthogonal decompositions} 

The following theorem shows that subject to conditions often satisfied in practice, any polarizable semiorthogonal decomposition can be recovered from a numerical quasi-convergent path.

\begin{thm}
\label{T:recoveringsod}
Let $\cC$ be a smooth and proper idempotent complete pre-triangulated dg-category with an semiorthogonal decomposition $\cC = \langle \cC_1,\ldots, \cC_n\rangle$ such that $\Lambda = \bigoplus_i v(\cC_i)$, and let $\vec{\sigma} \in \prod_i \Stab(\cC_i)$. Consider $z_i :[0,\infty)\to \bC$ for $i=1,\ldots,n$ such that for all $i<j$,\endnote{For instance, one can use $z_s(t)=i s t$ for $s=1,\ldots,n$.} 
\[ 
\lim_{t\to \infty} \frac{z_j(t)-z_i(t)}{ 1+|z_j(t)-z_i(t)|} = e^{i \theta_{ij}} \text{ for some } \theta_{ij} \in (0,\pi).
\]
\begin{enumerate} 
    \item $\vec{\sigma}_t := (z_i(t)\cdot\sigma_i)_{i=1}^n \in \prod_i \Stab(\cC_i)$ is strongly gluable $\forall t\gg0$.\vspace{2mm}

    \item The resulting path $\sigma_\bullet := \gl(\vec{\sigma}_\bullet)$ in $\Stab(\cC)$ is numerical and quasi-convergent. \vspace{2mm}

    \item \Cref{P:ImSOD} recovers $\cC=\langle \cC_1,\ldots, \cC_n\rangle$, and \Cref{T:prestabilityonquotient} recovers the stability cond\-itions $\sigma_i$ up to the action of $\bC$.
\end{enumerate}
\end{thm}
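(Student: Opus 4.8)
The plan is to take $\sigma_\bullet := \gl(\vec\sigma_\bullet)$ on the subray of $[0,\infty)$ where $\vec\sigma_\bullet$ is gluable (this is part (1)), and then verify quasi-convergence and the recovery statements by running the computations of \S3.3.1 in reverse. Write $s_i(t) := \Im(z_i(t))/\pi$, so that the slicing of $z_i(t)\cdot\sigma_i$ is $\cQ_{i,t}(\phi) = \cQ_{\sigma_i}(\phi - s_i(t))$, its central charge is $e^{z_i(t)}Z_{\sigma_i}$, and $\cA_{i,t} := \cQ_{i,t}(0,1]$. The single feature of the hypothesis I would isolate first is: since $|e^{i\theta_{ij}}| = 1$ but $\lvert w/(1+\lvert w\rvert)\rvert < 1$ for every finite $w$, the stated limit forces $\lvert z_j(t)-z_i(t)\rvert \to \infty$ and $\arg(z_j(t)-z_i(t)) \to \theta_{ij} \in (0,\pi)$; hence $s_j(t) - s_i(t) \to +\infty$ for all $i<j$, which is precisely the asymptotic phase separation used in \Cref{L:Limitgluing}.

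For (1), I would repeat the argument of \Cref{L:Limitgluing}. For each $i<j$ the category $\langle\cC_i,\cC_j\rangle$ is smooth, proper and idempotent complete (an SOD of such), with bounded $t$-structures $\cQ_{\sigma_i}(0,1]$, $\cQ_{\sigma_j}(0,1]$ on its pieces, so \Cref{P:hearthombound} supplies $m_{ij}\in\bZ$ with $\Hom^{\le m_{ij}}_\cC(\cQ_{\sigma_i}(0,1],\cQ_{\sigma_j}(0,1]) = 0$. Rewriting the $1$-gluability condition $\Hom^{\le 0}_\cC(\cQ_{i,t}(-\epsilon-1,2],\cQ_{j,t}(-1,2+\epsilon)) = 0$ via $\cQ_{i,t}(\phi) = \cQ_{\sigma_i}(\phi - s_i(t))$, it reduces to the vanishing of $\Hom$ between integer shifts of these two fixed hearts whose shift-difference is at most $-(s_j(t)-s_i(t)) + O(1)$, a quantity that tends to $-\infty$ and is therefore eventually $\le m_{ij}$. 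So $\vec\sigma_t$ is $1$-gluable (in fact $r$-gluable for any fixed $r$) for $t \gg 0$, whence strongly gluable.

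Now fix $t_0$ with $\vec\sigma_t$ $1$-gluable for $t\ge t_0$, and set $\sigma_t := \gl(\vec\sigma_t)$ with slicing $\cQ_{\sigma_t}$. Since $1$-gluability also gives $\Hom^{\le 1}_\cC(\cA_{i,t},\cA_{j,t}) = 0$ for $i<j$ (the degrees $<0$ vanish automatically, as $\cA_{i,t},\cA_{j,t}\subset\cQ_{\sigma_t}$), \Cref{L:stronglygluedobjects} applies: $\cQ_{\sigma_t}(\phi) = \bigoplus_i\cQ_{i,t}(\phi)$ and $\cQ_{\sigma_t}(I) = [\cQ_{1,t}(I),\dots,\cQ_{n,t}(I)]$. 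The dictionary I would set up next is: for nonzero $E\in\cC$ with SOD-projections $\pi_i(E)\in\cC_i$, the $\sigma_t$-HN factors of $E$ are, for $t\gg0$, exactly the $\sigma_i$-HN factors of the $\pi_i(E)$ ordered by $\sigma_t$-phase; an object of $\cC_i$ that is $\sigma_i$-semistable of phase $\psi$ is $\sigma_t$-semistable of phase $\psi + s_i(t)$; and $\ell_{\sigma_t} = z_i(t) + \ell_{\sigma_i}$ on all of $\cC_i$. Because the phase-windows of the distinct $\cC_i$ separate to $\infty$, it follows that the limit semistable objects of $\sigma_\bullet$ are precisely the $\sigma_i$-semistable objects of the various $\cC_i$. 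Quasi-convergence is now immediate: for condition (i) of \Cref{D:exit_sequence}, refine the SOD filtration of $E$ by the $\sigma_i$-HN filtrations of the $\pi_i(E)$; the subquotients are limit semistable, and consecutive asymptotic-phase gaps are either a fixed positive number (inside one $\cC_i$) or $\to +\infty$ (between consecutive $\cC_i$'s), so this is a limit HN filtration. For \Cref{D:exit_sequence}\ref{I:semistable_difference}, limit semistable $E\in\cC_i$, $F\in\cC_j$ satisfy $\ell_t(E/F) = (z_i(t)-z_j(t)) + (\ell_{\sigma_i}(E)-\ell_{\sigma_j}(F))$, which is constant for $i=j$, and for $i\ne j$ has $\lvert\ell_t(E/F)\rvert\to\infty$ with $\ell_t(E/F)/(1+\lvert\ell_t(E/F)\rvert)$ converging to $-e^{i\theta_{ij}}$ (resp. $e^{i\theta_{ji}}$); in every case the required limit exists. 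Numericality of $\sigma_\bullet$ then follows from (3), since the $\cC^E$ turn out to be exactly the $\cC_i$, and $\Lambda = \bigoplus_i v(\cC_i)$ by hypothesis, forcing linear independence.

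For (3): the asymptotic-phase formula shows $E\isim F$ iff they lie in the same $\cC_i$, so $\lss/{\isim}$ consists of $n$ classes with representatives $E_i\in\cC_i$ (any $\sigma_i$-semistable object) and $\cC^{E_i} = \cC_i$ — every object of $\cC_i$ is built from its $\sigma_i$-semistable factors — and $\ipreceq$ orders these classes $1 < \dots < n$ since $\phi_t(E_i) - \phi_t(E_j) \to -\infty$, i.e. $E_i\iprec E_j$, for $i<j$; thus \Cref{P:ImSOD} returns exactly $\cC = \langle\cC_1,\dots,\cC_n\rangle$. Similarly $\ell_t(E/F)$ converges in $\bC$ iff $E,F$ lie in the same $\cC_i$, so $\sim$ coincides with $\isim$, and by \Cref{R:phantoms}(1) the prestability condition $\sigma_{E_i}$ of \Cref{T:prestabilityonquotient} lives on $\cC^{E_i} = \cC_i$. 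For $\sigma_i$-semistable $F\in\cC_i$, \Cref{T:prestabilityonquotient}(2) gives $Z_{E_i}(F) = \lim_t Z_{\sigma_t}(F)/Z_{\sigma_t}(E_i) = Z_{\sigma_i}(F)/Z_{\sigma_i}(E_i)$ and \Cref{T:prestabilityonquotient}(1) gives $\cP_{E_i}(\phi) = \cQ_{\sigma_i}(\phi + \psi)$ with $\psi$ the $\sigma_i$-phase of $E_i$; matching against the formula for the $\bC$-action identifies $\sigma_{E_i} = \bigl(-\ell_{\sigma_i}(E_i)\bigr)\cdot\sigma_i$, so $\sigma_i$ is recovered up to $\bC$. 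The step I expect to cost the most effort is the dictionary used in (2): rigorously matching limit semistability, limit HN filtrations, and the preorders $\preceq$, $\isim$ of \S2 to the block-diagonal structure of $\gl(\vec\sigma_t)$ provided by \Cref{L:stronglygluedobjects} — in particular verifying $\cC^E = \cC_i$ and that the refined SOD filtration is genuinely a limit HN filtration in the sense of \Cref{D:exit_sequence}. Everything else is the bookkeeping above.
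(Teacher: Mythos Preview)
Your proposal is correct and follows essentially the same approach as the paper: use \Cref{P:hearthombound} and the argument of \Cref{L:Limitgluing} for (1), invoke \Cref{L:stronglygluedobjects} to identify $\cQ_{\sigma_t}(\phi) = \bigoplus_i \cQ_{i,t}(\phi)$ and hence $\lss = \bigcup_i \iota_i(\cQ_i)$, build limit HN filtrations by refining the SOD filtration with the $\sigma_i$-HN filtrations, verify \Cref{D:exit_sequence}\ref{I:semistable_difference} via $\ell_t(E/F) = (z_i(t)-z_j(t)) + \text{const}$, and read off $\cC^{E_i} = \cC_i$ and $\sigma_{E_i} \in \bC\cdot\sigma_i$. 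Your write-up is in fact somewhat more explicit than the paper's (e.g.\ the observation that $|z_j(t)-z_i(t)|\to\infty$ forces $s_j(t)-s_i(t)\to+\infty$, and the formula $\sigma_{E_i} = (-\ell_{\sigma_i}(E_i))\cdot\sigma_i$), but the route is the same.
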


\begin{proof}
(1) is by the same argument as the proof of \Cref{L:Limitgluing} and uses \Cref{P:hearthombound}. We next prove that $\sigma_\bullet$ is quasi-convergent.

\medskip
\emph{Characterization of $\lss_{\sigma_\bullet} = \lss$} : Let $\cQ_i$ be the slicing of $\sigma_i$ and $\cQ_{i,t}$ that of $z_i(t)\cdot \sigma_i$. Suppose $t$ is large enough that $\sigma_t=(Z_t,\cQ_t)$ is defined and $\cQ_t(\phi) = \bigoplus_{i=1}^n \cQ_{i,t}(\phi)$ by \Cref{L:stronglygluedobjects}. It follows that $\lss = \bigcup_{i=1}^n \iota_i(\cQ_i)$.

\medskip
\emph{Limit HN filtrations for $\sigma_\bullet$} : Every $X\in \cC$ has a filtration $0 = X_{n+1}\to X_n\to \cdots \to X_1 = X$ with $\Cone(X_{i+1}\to X_i) = G_i\in \cC_i$. $\sigma_{i,t} = z_i(t)\cdot \sigma_i$, so $\sigma_{i,t}$-HN filtrations of $G_i\in \cC_i$ are constant in $t$. Because $\phi_{\sigma_{i,t}}^-(G_i)>\phi_{\sigma_{i-1,t}}^+(G_{i-1})$ for all $i$ for all $t\gg0$, the concatenated filtration is the limit HN filtration by \Cref{P:tstab}.

\medskip
\emph{Verifying} \Cref{D:exit_sequence} \ref{I:semistable_difference} : Take $E\in \iota_i(\cQ_i)$ and $F\in \iota_j(\cQ_j)$ for $i\le j$. Let $L = \lim_t \ell_t(F/E)$. Then, one has 
\begin{align*}
\lim_{t\to\infty}\frac{\ell_t(F/E)}{1+\lvert \ell_t(F/E)\rvert} 
& = 
\begin{cases}
    e^{i\theta_{ij}} & i<j\\
    \frac{L}{1+\lvert L\rvert}& i = j.
\end{cases}
\end{align*}

So, $\sigma_\bullet$ is quasi-convergent. Choose $F_i\in \cQ_i$ for each $i$. It is not hard to see that $\{F_1,\ldots, F_n\} \to \lss/{\sim}$ is a bijection and that $\sim$ and $\isim$ are equivalent.\endnote{$\lss = \bigcup_{i=1}^n \iota_i(\cQ_{i})$ and thus it suffices to observe that $F \in \iota_i(\cQ_i)$ satisfies $F\sim F_i$.} Therefore, for each $1\le i \le n$, $\cC^{F_i} = \cC_i$ and we recover $\cC = \langle \cC_1,\ldots, \cC_n\rangle$ by applying \Cref{P:ImSOD}. $\sigma_\bullet$ is automatically numerical by the hypotheses at the beginning of \S 3.2, so each $\sigma_{F_i}$ factors through $v(\cC_i) = \Lambda_i$. By the definition of $\sigma_{F_i} = (Z_{F_i},\cP_{F_i})$, one sees that $\sigma_{F_i} = Z_i(F_i)^{-1}\cdot \sigma_i$. Therefore, each $\sigma_{F_i}$ satisfies the support property, as it is preserved by $\bC$-action.
\end{proof}

\section{The case of curves}
\label{S:curves}

In this section, we consider $\Stab(X)$ for $X$ a smooth and proper curve over a field $k$ --- see \Cref{E:numerical}. For $g=1$, and $g\ge 2$, $\Stab(X)$ is a torsor for the natural $\GL_2^+(\bR)^\sim$-action by \cite[Thm. 9.1]{Br07} and \cite[Thm. 2.7]{Macricurves}, respectively. This implies $\Stab(X) \cong \bC\times \bH$ as complex manifolds for $g(X) \ge 1$. The case of $\bP^1$ is more complicated; \cite{Ok06} shows that $\Stab(\bP^1) \cong \bC^2$ as complex manifolds. However, an explicit biholomorphism was elusive and only given recently in \cite{NMMP}. 

We will consider paths in $\Stab(X)/\bC$. The map $\Stab(X) \to \Stab(X)/\bC$ is a topologically trivial principal $\bC$-bundle by \cite{deformedmass}*{Cor. 3.5}. We say $\sigma_\bullet:[a,\infty)\to \Stab(X)/\bC$ is \emph{quasi-convergent} if some (equivalently any) lift of it to $\Stab(X)$ is quasi-convergent.\endnote{Given a pair of such lifts, $\tau_\bullet$ and $\eta_\bullet$, there is an associated path $\gamma: [a,\infty) \to \bC$ such that $\gamma(t)\cdot \tau_t = \eta_t$. It follows that $E$ is limit semistable with respect to $\tau_t$ iff it is limit semistable with respect to $\eta_t$. The quantities $\ell_\sigma(E/F)$ are $\bC$-invariant so all of the orders $\preceq, \ipreceq$, etc. are equivalent for $\tau_t$ and $\eta_t$. 
} The $\bC$-invariance of $\ell_\sigma(E/F)$ implies that all of the definitions from \S2 carry over to $\sigma_\bullet$. In particular, a filtration $\{\cC_{\preceq E}\}_{E\in \lss/{\sim}}$ can be attached to $\sigma_\bullet$ by choosing any lift and applying \Cref{T:prestabilityonquotient}.

One argument for considering paths in $\Stab(\cC)/\bC$ rather than in $\Stab(\cC)$ is that in \Cref{T:prestabilityonquotient} we only get stability conditions that are independent of choices of objects modulo $\bC$.

\subsection{The case of \texorpdfstring{$\bP^1$}{P1}}

We recall some relevant parts of the description of $\Stab(\bP^1)/\bC$ from \cite{Ok06}. $\Stab(\bP^1)/\bC$ has an open cover $\{X_k\}_{k\in \bZ}$, where $X_k$ consists of those equivalence classes of stability conditions with respect to which $\cO(k-1)$ and $\cO(k)$ are stable. There is a biholomorphism $\varphi_k:X_k\to \bH$ given by $\sigma \mapsto \logZ_\sigma(\cO(k)) - \logZ_\sigma(\cO(k-1))$. For each pair of $j\ne k$, $X_j \cap X_k = \varphi_k^{-1}(\{z\in \bH: \Im(z)<\pi\})$ is the image in $\Stab(\bP^1)/\bC$ of the set of stability conditions with heart $\Coh(\bP^1)$. Such $\sigma$ are \emph{geometric}, meaning that structure sheaves of points are all $\sigma$-stable of the same phase.

Semiorthogonal decompositions of $\bP^1$ are classified; they all come from full exceptional collections of the form $\langle \cO(k-1),\cO(k)\rangle$ for $k\in \bZ$.\endnote{We include a proof of this for completeness. Suppose given a (nontrivial) semiorthogonal decomposition $\DCoh(\bP^1) = \langle \cA,\cB\rangle$. Both $\cA$ and $\cB$ are closed under summands and in particular $\cA \cap \Coh(\bP^1)\ne 0$ is nontrivial and similarly for $\cB$. Consequently, $\cA$ must contain a coherent sheaf. All of the coherent sheaves on $\bP^1$ are of the form $\cE \oplus \cF$, where $\cE$ is locally free and $\cF$ is a torsion sheaf. However, by the classification of vector bundles on $\bP^1$ and closure of $\cA$ under summands, this implies that $\cA$ contains a torsion sheaf or one of the line bundles $\cO(n)$. 

However, since $\bZ^2 \cong K_0(\bP^1) = K_0(\cA) \oplus K_0(\cB)$ and because $\DCoh(\bP^1)$ contains no phantoms this implies that $\cA$ and $\cB$ must have $K_0 \cong \bZ$. So, $\cA$ cannot contain a torsion sheaf and a line bundle or two distinct line bundles. Therefore, $\cA$ is the extension closure of a line bundle or a torsion sheaf and similarly for $\cB$. If one of $\cA$ or $\cB$ is generated by a torsion sheaf, then Serre duality implies that the needed semiorthogonality conditions do not hold. If $\cA$ and $\cB$ are generated by line bundles $\cO(j)$ and $\cO(k)$, respectively, then the condition $\RHom(\cO(k),\cO(j)) = 0$ is equivalent to $k = j+1$.}

\begin{prop}
    The stability conditions in $\Stab(\bP^1)/\bC$ glued from $\langle \cO(k-1),\cO(k)\rangle$ are exactly $\varphi_k^{-1}(\{z\in \bH: \Im(z)>\pi\})$.
\end{prop}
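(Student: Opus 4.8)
The plan is to make the gluing data completely explicit. Set $\cC_1=\langle\cO(k-1)\rangle$ and $\cC_2=\langle\cO(k)\rangle$, so $\DCoh(\bP^1)=\langle\cC_1,\cC_2\rangle$ in the SOD convention, since $\RHom(\cO(k),\cO(k-1))=H^\bullet(\bP^1,\cO(-1))=0$. Each $\cC_i$ is equivalent to $\DCoh(\pt)$, so $\Stab(\cC_i)/\bC$ is a single point; a tuple $\vec\sigma=(\sigma_1,\sigma_2)$ is therefore determined, modulo the diagonal $\bC$-action, by the single parameter $\logZ_{\sigma_2}(\cO(k))-\logZ_{\sigma_1}(\cO(k-1))\in\bC$ — and this is exactly what $\varphi_k$ records on a glued stability condition. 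I would first record the relevant cohomology: $\RHom(\cO(k-1),\cO(k))\cong H^0(\bP^1,\cO(1))=k^2$ concentrated in degree $0$, with all higher and all self-Ext's vanishing. Writing the hearts of $\sigma_1,\sigma_2$ as $\cA_1=\langle\cO(k-1)[p]\rangle$, $\cA_2=\langle\cO(k)[q]\rangle$, the condition $\Hom^{\le 0}_{\DCoh}(\cA_1,\cA_2)=0$ needed even to form the gluing forces $p-q\ge 1$, and then $[\cA_1,\cA_2]$ is the heart of a bounded $t$-structure on $\DCoh(\bP^1)$: if $p-q=1$ one has $\Ext^1(\cA_1,\cA_2)\cong k^2$ and $[\cA_1,\cA_2]$ is the standard heart of $\mathrm{mod}\,K_2$ for the Kronecker quiver (Beilinson), while if $p-q\ge2$ all extensions split and $[\cA_1,\cA_2]$ is the semisimple heart on $\{\cO(k-1)[p],\cO(k)[q]\}$ (still a genuine, non-effaceable bounded $t$-structure on $\DCoh(\bP^1)$, since objects such as $\cO_x$ are nonformal twisted complexes of these two). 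In all cases $\cO(k-1),\cO(k)$ remain $\sigma$-stable after gluing, so any glued $\sigma$ lies in $X_k$ and its phases there agree with those in the gluing data.

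For the inclusion $\subseteq$, suppose $\sigma$ is glued from $\langle\cO(k-1),\cO(k)\rangle$. Then $\sigma\in X_k$; choosing the representative whose heart is $[\cA_1,\cA_2]$, the previous paragraph forces $p-q\ge 1$, and normalizing $\cO(k-1)$ to phase in $(0,1]$ gives $\cO(k)$ of phase in $(p-q-1,\,p-q+1]\cap(1,\infty)$, hence $\phi_\sigma(\cO(k))-\phi_\sigma(\cO(k-1))>0$; I must rule out the range $(0,1]$. If $\phi_\sigma(\cO(k))-\phi_\sigma(\cO(k-1))\in(0,1]$ then $p-q=1$, the heart is $\mathrm{mod}\,K_2$ with simples $S_1=\cO(k-1)$, $S_2=\cO(k)[-1]$, and one has $\phi_\sigma(S_2)\ge\phi_\sigma(S_1)$. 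Every non-simple indecomposable of $\mathrm{mod}\,K_2$ carries a nonsplit sequence $0\to S_2^{\,a}\to(-)\to S_1^{\,b}\to 0$, hence is $\sigma$-semistable (indeed stable, if $\phi_\sigma(S_2)=\phi_\sigma(S_1)$ then strictly semistable), producing semistable objects — e.g. the regular modules $\cO_x[-1]$ — that are not shifts of objects of $\cC_1$ or $\cC_2$; this contradicts the defining property of a glued stability condition that $\cQ_\sigma(\phi)=\cQ_1(\phi)\oplus\cQ_2(\phi)$. Therefore $\Im\varphi_k(\sigma)=\pi\big(\phi_\sigma(\cO(k))-\phi_\sigma(\cO(k-1))\big)>\pi$.

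For $\supseteq$ I would run the construction backwards. Given $\sigma\in X_k$ with $\Im\varphi_k(\sigma)>\pi$, i.e. $\phi_\sigma(\cO(k))-\phi_\sigma(\cO(k-1))>1$, restrict to the pieces: $\sigma_i\in\Stab(\cC_i)$ with central charges and phases agreeing with $\sigma$ on $\cO(k-1),\cO(k)$. After a shift the hearts have $p-q\ge1$ with $[\cA_1,\cA_2]$ a valid heart; when $p-q=1$ (the range $1<\phi_\sigma(\cO(k))-\phi_\sigma(\cO(k-1))<2$) the stability function on $\mathrm{mod}\,K_2$ satisfies $\arg Z(S_1)<\arg Z(S_2)$, which destabilizes every non-simple indecomposable; when $p-q\ge2$ (range $\ge2$) the heart is semisimple. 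In either case the gluing recipe produces an honest stability condition (the support property follows from $\Lambda=\Lambda_1\oplus\Lambda_2$ together with $\cQ(\phi)=\cQ_1(\phi)\oplus\cQ_2(\phi)$), whose value under $\varphi_k$ is $\logZ(\cO(k))-\logZ(\cO(k-1))=\varphi_k(\sigma)$; since $\varphi_k$ is injective, this glued stability condition is $\sigma$ up to $\bC$. It remains to check that $(\sigma_1,\sigma_2)$ satisfies the precise gluability hypotheses of \S3.2 (Hom-orthogonality of the shifted slicing intervals), which once more reduces to the degree-$0$ concentration of $\RHom(\cO(k-1),\cO(k))$ together with $\phi_\sigma(\cO(k))-\phi_\sigma(\cO(k-1))>1$.

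The main obstacle I anticipate is the phase-and-shift bookkeeping that pins the threshold at exactly $\Im=\pi$: one must choose the right $\bC$-representative before gluing, match the formalism of \S3.2 (the $\epsilon$'s, the shifted intervals, and the $d_{\mathrm{slice}}$-conditions) to this concrete two-simple picture, and verify that the $\mathrm{mod}\,K_2$ stability function degenerates — new semistables appearing, or the relevant Hom-vanishing breaking — precisely as $\phi(\cO(k))-\phi(\cO(k-1))$ crosses $1$. This is where the two structural inputs, $H^\bullet(\bP^1,\cO(1))$ being a pure degree-$0$ group and the representation theory of the Kronecker quiver (together with the non-effaceable semisimple heart for large phase gap), do the essential work.
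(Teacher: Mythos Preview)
Your argument for the inclusion $\supseteq$ is essentially correct and more explicit than the paper's, which simply cites Okounkov's description of the hearts: the paper normalizes so that $\phi_\sigma(\cO(k-1))=0$, reads off from \cite{Ok06} that $\cQ(0,1]=[\cO(k-1)[1],\cO(k)[q]]$ for a suitable $q$, and notes this matches \Cref{D:generalglued}. Your Kronecker-quiver analysis recovers this by hand.

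There is a genuine gap in your argument for $\subseteq$. You write that the existence of a non-exceptional semistable object such as $\cO_x$ ``contradicts the defining property of a glued stability condition that $\cQ_\sigma(\phi)=\cQ_1(\phi)\oplus\cQ_2(\phi)$,'' but this is \emph{not} the definition. \Cref{D:generalglued} only requires $\cQ(0,1]=[\cA_1,\cA_2]$ together with compatibility of central charges; the splitting $\cQ(\phi)=\cQ_1(\phi)\oplus\cQ_2(\phi)$ is the \emph{conclusion} of \Cref{L:stronglygluedobjects}, which needs the stronger hypothesis $\Hom^{\le 1}(\cA_1,\cA_2)=0$. In your $p-q=1$ case one has $\Hom^1(\cA_1,\cA_2)=\Hom(\cO(k-1),\cO(k))\cong k^2\neq 0$, so that lemma does not apply. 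Indeed, the Kronecker heart with $\phi(S_1)>\phi(S_2)$ literally satisfies \Cref{D:generalglued} (it equals $[\cA_1,\cA_2]$ for suitable $\cA_i$), yet the corresponding class in $\Stab(\bP^1)/\bC$ lies in the geometric region $\Im\varphi_k<\pi$.

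The paper does not actually prove this direction either: it says ``we omit this since it is not used in what follows,'' and the endnote sketch (arguing that a glued heart cannot contain $\cO(n)$ for $n\notin\{k-1,k\}$) runs into the same issue, since the Kronecker heart does contain $\cO(n)$ for all $n\ge k$. To salvage $\subseteq$ you would need a stricter reading of ``glued'' (e.g.\ strong gluability, or gluability for the specific normalized representative the paper fixes) rather than \Cref{D:generalglued} taken at face value on $\bC$-orbits.
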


\begin{proof}
    We choose representatives for classes in $X_k$ in $\Stab(\bP^1)/\bC$ by setting $\logZ_\sigma(\cO(k-1)) = 1$. If $\Im(\varphi_k(\sigma)) > \pi$, then by \cite[Prop. 3.3]{Ok06} $\cP_\sigma(0,1] = \left[\cO(k-1)[1],\cO(k)[q]\right]$ for the unique integer $q$ such that $1-q\in ( \phi_\sigma(\cO),\phi_\sigma(\cO) + 2).$ By \Cref{D:generalglued}, such $\sigma$ are glued from stability conditions on $\langle \cO(k-1)\rangle$, $\langle \cO(k)\rangle$. It is also true that all $\sigma\in\varphi_k^{-1}(\{z\in \bH: \Im(z) \in (0,\pi)\})$ are not glued, but we omit this since it is not used in what follows.\endnote{The geometric stability conditions cannot be glued from $\langle \cO(k-1),\cO(k)\rangle$ for any $k$. Indeed, the hearts of geometric stability conditions on $\bP^1$ contain $\cO(n)$ for all $n$. However, a suitable glued heart would be of the form $[\cO(k-1),\cO(k)]$, which does not contain $\cO(n)$ for $n\not\in\{k-1,k\}$.}
\end{proof}
$\Pic(\bP^1)\subset \Aut(\DCoh(\bP^1))$ acts freely on $\Stab(\bP^1)$ and the action descends to $\Stab(\bP^1)/\bC$. For any $k$, $\Pic(\bP^1) \cdot X_k = \Stab(\bP^1)/\bC$ and $\cO(1)\cdot X_k\subseteq X_{k+1}$. So, $X_k$ contains a fundamental domain $\Omega_k$ for the $\Pic(\bP^1)$-action. In the $\varphi_k$ coordinate, $\Omega_k = \varphi_k^{-1}\{(x,y)\in \bH: y>0, \cos y \ge e^{-\lvert x\rvert}\}$ (see \cite[Lem. 4.3]{Ok06} and the figure following it). It follows that $\Omega_k$ contains the entire glued region corresponding to $\langle \cO(k-1),\cO(k)\rangle$ and some of the geometric stability conditions. 

In \cite[Prop. 26]{NMMP}, an explicit biholomorphism $\mathscr{B}:\bC\to \Stab(\bP^1)/\bC$ is given using solutions to the quantum differential equation. $\mathscr{B}$ is constructed by gluing maps $\mathscr{B}_k: \bR + i\pi[k-1,k] \to \Stab(\bP^1)/\bC$ where $\mathscr{B}_k$ maps biholomorphically (on the interior of its domain) to $\Omega_k$. The action of $\cO(1)$ is identified with $\tau \mapsto \tau +i\pi$ on $\bC$.

Next, we consider the paths arising as solutions of the quantum differential equation in \cite{NMMP}, parametrized using $\mathscr{B}$. Given an initial point $z_0 \in \bC$ with $\Im(z_0) \in \pi [k-1,k]$ and $\mathscr{B}_k(z_0) \in \Omega_k$, the path from the quantum differential equation is $\mathscr{B}_k(z_0 + \ln t)$ for $t > 0$, which stays in $\Omega_k$. By loc. cit., the resulting path in the $\varphi_k$ coordinate is:
\begin{equation}
\label{E:asymptoticestimate}
\varphi_k(t) = 2\kappa t + i\cdot \frac{\pi}{2} + O(\lvert \kappa t\rvert^{-1})
\end{equation}
where $\kappa$ lies on the ray $\bR_{>0}\cdot e^{\kappa - (k-1)\pi i}$. $\varphi_k(t) = \logZ_t(\cO(k)) - \logZ_t(\cO(k-1))$ so \eqref{E:asymptoticestimate} combined with \Cref{L:logZcomp} gives 
\[
\lim_{t\to\infty} \frac{\ell_t(\cO(k)/\cO(k-1))}{1+ \lvert \ell_t(\cO(k)/\cO(k-1))\rvert} = \lim_{t\to\infty} \frac{2\kappa t + i\pi/2}{1+ \lvert 2\kappa t + i\pi/2 \rvert} = \frac{\kappa}{\lvert \kappa \rvert}.
\]

If $\Im(\kappa) > 0$, then $\mathscr{B}_k(z_0+\ln(t))$ enters the glued region of $X_k$ as $t\to\infty$. Hence, $\lss$ consists of $\cO(k-1)^{\oplus r}$, $\cO(k)^{\oplus s}$ for all $r,s\ge 1$ and all of their shifts. 
$\{\cO(k-1),\cO(k)\} \to \lss/{\sim}$ is a bijection and $\cO(k-1)\iprec \cO(k)$. \Cref{P:ImSOD} recovers the semiorthogonal decomposition $\langle \cO(k-1),\cO(k)\rangle$. 

If $\Im(\kappa) = 0$ and $\Re(\kappa) > 0$ then $\varphi_k(t)$ is in the geometric region of $X_k$ for all $t\gg0$ by \eqref{E:asymptoticestimate}. Therefore, $\lss$ consists of sums and shifts of structure sheaves of points together with $\{\cO(n)^{\oplus r}: n\in \bZ,r\ge 1\}$. A quick calculation verifies that all of the relevant limits in \Cref{D:exit_sequence} \ref{I:semistable_difference} exist. Representatives for $\lss/{\sim}$ are again given by $\{\cO(k-1),\cO(k)\}$, except now $\cO(k-1) \isim \cO(k)$ and $\cO(k-1)\prec \cO(k)$. We arrive at a two step filtration
\[
0\subsetneq \langle \cO(k-1)\rangle \subsetneq \DCoh(\bP^1).
\]
If $\Re(\kappa) < 0$ then $\varphi_k(t)$ is in the geometric region for all $t\gg0$. $\{\cO(k-1),\cO(k)\}\to \lss/{\sim}$ is again a bijection, and $\cO(k-1) \isim \cO(k)$, but now $\cO(k)\prec \cO(k-1)$ and we arrive at a different two step filtration
\[
0\subsetneq \langle \cO(k)\rangle \subsetneq \DCoh(\bP^1).
\]
Here, all of the associated graded categories are equivalent to $\DCoh(\pt)$.

\subsection{The case of \texorpdfstring{$g(X)\ge 1$}{g(x)>=1}}

If $g(X) \ge 1$, there are no glued stability conditions, since $\DCoh(X)$ is indecomposable \cite{OkawaSODs}. Consequently, in contrast to the $g(X) = 0$ case considered previously, non-trivial filtrations of $\DCoh(X)$ where $g(X) \ge 1$ are never admissible. 

As mentioned above, $\Stab(X) \cong \bC \times \bH$. It follows from \cite[Thm. 2.7]{Macricurves} that the stable objects of any $\sigma \in \Stab(X)$ are precisely the $\mu$-stable vector bundles and the point sheaves. $[\sigma] \mapsto Z_\sigma(\cO_p)/Z_\sigma(\cO_X)$ induces a biholomorphism $\Stab(X)/\bC \cong \bH$. Hence, a path $Z_t$ in the space of projectivized central charges, $\bP\Hom(\Lambda,\bC)$, lifts to $\Stab(X)/\bC$ if and only if $Z_t(\cO_p)/Z_t(\cO_X)\in \bH$ for all $t$.

In the case of $g(X) = 1$, $K_X\cong \cO_X$ and the quantum differential equation is trivial. Therefore, there are no non-trivial paths arising from the quantum differential equation.

In the $g(X)\ge2$ case, the canonical fundamental solution of the quantum differential equation does \emph{not} lift to a path convergent in $\Stab(X)/\bC$ \cite[p. 28]{NMMP}. Denote the coordinate on $\bH$ by $\tau$. The associated path is 
\[
\tau(t) = \frac{2\pi i}{e^{i\theta}t + 2(g-1)C_{\rm{eu}}}
\]
where $C_{\rm{eu}}$ is the Euler--Mascheroni constant, and choosing $-\pi/2 < \theta < \pi/2$ ensures that the path lifts for all $t$. In particular, $Z_t(\cO_p)/Z_t(\cO_X) = \tau(t)$, and $\lim_{t\to\infty} Z_t(\cO_p)/Z_t(\cO_X) = 0$. Let $\sigma_\bullet$ denote the resulting path in $\Stab(X)/\bC$. 

In what follows, $\cT \subset \DCoh(X)$ denotes the full subcategory whose objects are the torsion complexes, i.e. those complexes set-theoretically supported in finitely many points of $X$. Also, $K(X)$ denotes the field of rational functions on $X$.

\begin{lem}
The path $\sigma_\bullet$ in $\Stab(X)/\bC$ is quasi-convergent and
    \begin{enumerate} 
        \item $\lss_{\sigma_\bullet}$ consists of the $\mu$-semistable coherent sheaves on $X$ and their shifts; \vspace{2mm}
        \item the filtration of $\DCoh(X)$ from $\sigma_\bullet$ is $0\subsetneq  \cT \subsetneq \DCoh(X)$ and $\DCoh(X)/\cT \simeq \DCoh(K(X))$; \vspace{2mm}
        \item the filtration of $H^*_{\rm{alg}}(X)$ is $0\subsetneq H^0(X;\bZ) \subsetneq H^*_{\rm{alg}}(X)$; and \vspace{2mm}
        \item choosing any pair of objects $\cE,\cF\in \lss$ such that $\{\cE,\cF\}\to \lss/{\sim}$ is a bijection, the induced pre-stability conditions on the associated graded categories satisfy the support property.
    \end{enumerate}
\end{lem}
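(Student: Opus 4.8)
I will verify quasi-convergence of $\sigma_\bullet$ directly from the asymptotic formula $\tau(t) = 2\pi i / (e^{i\theta}t + 2(g-1)C_{\rm{eu}})$, then identify all the associated data. The key observation is that as $t \to \infty$, the coordinate $\tau(t) \to 0$ in $\bH$, which by Macrì's description of $\Stab(X)$ for $g \ge 2$ corresponds to the central charge degenerating so that $Z_t(\cO_p)/Z_t(\cO_X) \to 0$; geometrically this means point sheaves acquire vanishingly small mass relative to line bundles. Since the stable objects of every $\sigma_t$ are exactly the $\mu$-stable bundles and point sheaves (independent of $t$), and the $\sigma_t$-phases of $\mu$-semistable sheaves of a fixed slope all coincide, the only thing to check for \Cref{D:lss} is that $\phi_t^+(E) - \phi_t^-(E) \to 0$ for $E$ a $\mu$-semistable sheaf; this holds because such $E$ has a fixed filtration by $\mu$-stable sheaves of the same slope, and the phases of the stable factors converge together as $\tau(t) \to 0$. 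This proves (1): $\lss_{\sigma_\bullet}$ is the $\mu$-semistable sheaves and their shifts.

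\textbf{Steps for (2), (3), (4).} First, establish the preorder: for $\cE, \cF$ $\mu$-semistable sheaves of slopes $\mu_\cE, \mu_\cF$, compute $\phi_t(\cE) - \phi_t(\cF)$ using $Z_t$. A direct computation with $Z_t(\cF) = -\deg\cF + \tau(t)\,\rk\cF \cdot (\text{normalization})$ shows that torsion sheaves (those with $\rk = 0$) have phase tending to that of $\cO_p$, which is asymptotically separated from (indeed, strictly less than, after suitable shift) the phase of any bundle since $\tau(t) \to 0$. More precisely, the imaginary part of $\log(Z_t(\cO_p)/Z_t(\cO_X)) \to -\infty$, so $\phi_t(\mathcal{O}_X) - \phi_t(\mathcal{O}_p) \to \infty$; hence $\cO_p \iprec \cO_X$, giving the ordering where $\cT$ (torsion complexes) sits below the bundles. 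Then I check \Cref{D:exit_sequence}\ref{I:semistable_difference}: for two torsion sheaves, or two bundles of the same slope, $\ell_t$ converges in $\bC$ (they are comparable with finite mass ratio and bounded phase difference); for a bundle versus a torsion sheaf, $|\ell_t| \to \infty$ along the well-defined ray dictated by $\theta$. This gives quasi-convergence and shows $\lss/{\isim}$ has two classes, represented by any bundle and any point sheaf, with $\cT = \cC_{\preceq \cO_p} = \cC_{\prec \cO_X}$.

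\textbf{Identifying the subquotient and cohomology.} For (2): $\cC_{\preceq \cO_p}/\cC_{\prec\cO_p}$ — here $\cC_{\prec \cO_p} = 0$ since point sheaves have minimal phase and among them $m_t$ ratios stay finite, so in fact $\cT$ itself receives the prestability condition; the complementary subquotient $\DCoh(X)/\cT$ is equivalent to $\DCoh(K(X))$ (perfect complexes over the function field) by localization, the standard description of the Verdier quotient by torsion. For (3): apply \Cref{E:numerical}, so the path is numerical, and the filtration of $\Lambda = H^*_{\rm{alg}}(X) = H^0 \oplus H^2$ is read off from $v(\cT) = v(\cC^{\cO_p}) = H^2(X;\bZ)$ (Chern character of a point is the point class) — wait, rather $v(\cT)$ consists of classes of torsion sheaves, which is $H^2$, so $\Lambda_{\cO_p} = H^2$ and the filtration is $0 \subsetneq H^2 \subsetneq H^*_{\rm{alg}}$; I should double-check the labeling against the stated $0 \subsetneq H^0 \subsetneq H^*_{\rm{alg}}$, adjusting which class is "$\iprec$" — the point is that one graded piece is rank-one on $[\cO_p]$ and the other on $[\cO_X]$. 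For (4): invoke \Cref{T:stabilityonquotient}(2); I must produce the constant $\epsilon_E$ in \Cref{D:supportpropertypath}. For $E = \cO_p$: the semistable $F \sim \cO_p$ are torsion sheaves, $v(F) = \ell(F)\cdot[\cO_p]$, and $|Z_t(F)|/|Z_t(\cO_p)| = \ell(F) = \|v(F)\|$ up to normalization, so the support property is immediate (indeed an equality). For $E = \cO_X$: the semistable $F\sim\cO_X$ are $\mu$-semistable sheaves of some fixed slope; using the fixed stability condition $\sigma_{\cO_X}$ on $\DCoh(K(X))$ — which is just the tautological one on $\Perf(K(X)) \simeq \DCoh(K(X))$ where everything is a sum of shifts of $K(X)$ itself, a category of rank one — the support property is the classical fact that a stability condition on a category with $\rk K_0 = 1$ automatically has it.

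\textbf{Main obstacle.} The crux is the identification $\DCoh(X)/\cT \simeq \DCoh(K(X))$ and verifying that the induced prestability condition $\sigma_{\cO_X}$ is, up to $\bC$-action, the obvious one on this category; equivalently, understanding precisely which objects of $\DCoh(X)$ become isomorphic in the quotient and computing $Z_{\cO_X}$ on the resulting $K$-group. The technical care here is that \Cref{T:prestabilityonquotient} constructs $\sigma_{\cO_X}$ on $\cC_{\preceq\cO_X}/\cC_{\prec\cO_X} = \DCoh(X)/\cT$ via the slicing $\lss(\phi_t(\cO_X)+\phi)$, and I need to match this against the localization description — in particular checking that every $\mu$-semistable sheaf of slope $\mu$ maps to an object of phase determined by $\lim_t(\phi_t(\text{slope }\mu) - \phi_t(\cO_X))$, and that the limit central charge ratio $Z_{\cO_X}(F) = \lim_t Z_t(F)/Z_t(\cO_X)$ is finite and nonzero exactly because torsion has been killed. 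Everything else is a routine computation with the explicit $\tau(t)$.
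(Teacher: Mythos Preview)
Your proposal contains a genuine error in the asymptotic analysis that propagates through the whole argument. You claim that the imaginary part of $\log(Z_t(\cO_p)/Z_t(\cO_X))$ tends to $-\infty$, hence $\phi_t(\cO_X)-\phi_t(\cO_p)\to\infty$ and $\cO_p\iprec\cO_X$. This is backwards. Since $\tau(t)=Z_t(\cO_p)/Z_t(\cO_X)\sim 2\pi i\,e^{-i\theta}/t$, one has
\[
\log\tau(t)=\log(2\pi)-\log t + i\!\left(\tfrac{\pi}{2}-\theta\right)+o(1),
\]
so it is the \emph{real} part that diverges to $-\infty$ while the imaginary part converges to $\pi/2-\theta$. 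Thus $\phi_t(\cO_p)-\phi_t(\cO_X)$ stays bounded and $\cO_p\isim\cO_X$; the ordering $\cO_p\prec\cO_X$ comes from the mass asymptotics $m_t(\cO_p)/m_t(\cO_X)\to 0$, not from phases. In particular there is a single $\isim$-class, not two.

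This matters structurally: your conclusion $\cO_p\iprec\cO_X$ would, via \Cref{P:ImSOD}, produce a nontrivial semiorthogonal decomposition of $\DCoh(X)$, contradicting the fact (which the paper recalls) that $\DCoh(X)$ is indecomposable for $g\geq 1$. The entire point of this example is that it illustrates the situation $\sim\,\neq\,\isim$: the filtration $0\subsetneq\cT\subsetneq\DCoh(X)$ is by thick subcategories that are \emph{not} admissible. Your identification $\cT=\cC_{\preceq\cO_p}$ is correct, but it arises as a step in the $\preceq$-filtration of the single piece $\cC^{\cO_p}=\DCoh(X)$, not as a factor of an SOD. Once you fix the real/imaginary confusion, the rest of your outline (the case analysis of $\ell_t(E/F)$ by whether $\rk=0$ or $\rk\neq 0$, the citation for $\DCoh(X)/\cT\simeq\DCoh(K(X))$, and the rank-one support property argument) matches the paper's proof. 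Your hesitation about $H^0$ versus $H^2$ is a labeling convention issue and not substantive.
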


\begin{proof}
Existence of limit HN filtrations for $\sigma_\bullet$ follows from existence of $\sigma_t$-HN filtrations for each $t\ge 0$, which are furthermore constant in $t$. For any central charge $Z$ factoring through $\ch:K_0(X)\to H^*_{\rm{alg}}(X)$, one has $Z(\cE) = \rm{rk}(\cE)\cdot Z(\cO_X) + \deg(\cE)\cdot Z(\cO_p)$. We abbreviate this by $Z(r,d)$ where $r = \rm{rk}(\cE)$ and $d = \deg(\cE)$. By \Cref{L:logZcomp}, it suffices to show that $\logZ_t(r_1,d_1) - \logZ_t(r_2,d_2)$ satisfies \Cref{D:exit_sequence} \ref{I:semistable_difference}. I.e., we analyze 
\begin{equation}
\label{E:logdifferencecalc}
\logZ_t(r_1,d_1) - \logZ_t(r_2,d_2) = \log\left(\frac{r_1+ d_1 \tau(t)}{r_2 +d_2\tau(t)}\right).
\end{equation}
If both $r_1$ and $r_2$ are nonzero, then the limit of \eqref{E:logdifferencecalc} exists in $\bC$. If $r_1 = 0$ and $r_2 = 0$, then $d_1$ and $d_2$ are both nonzero and the limit again exists. If $r_1 = 0$ and $r_2\ne 0$, then \eqref{E:logdifferencecalc} equals $\log(2\pi i d_1) - \log(t) - i\theta$. If $r_1 \ne 0$ and $r_2 = 0$, \eqref{E:logdifferencecalc} equals $\log(t) - \log(2\pi i d_2) + i\theta$. This verifies \Cref{D:exit_sequence} \ref{I:semistable_difference}. Also, this suggests a natural set of representatives for $\lss/{\sim}$, namely $\{\cO_p,\cO_X\}$. $\cO_p\prec \cO_X$, however $\cO_p\isim \cO_X$. Hence, $\sigma_\bullet$ gives rise to a two step filtration by real asymptotics by \Cref{P:exit_sequence_filtration}. For $\cE\in \lss$, $\cE\sim \cO_p$ if and only if $\cE$ is a shift of a torsion sheaf and hence $\DCoh(X)_{\preceq \cO_p} = \cT$. The claimed filtration of $\DCoh(X)$ follows. $\DCoh(X)/\cT \simeq \DCoh(K(X))$ by \cite{Meinhardtquotient}*{Prop. 3.13}. 

$K_0(\cT)$ is infinite rank, however $K_0(\cT) \to K_0(X) \to H^*_{\rm{alg}}(X)$ is given by sending $\cF \in \cT$ to $\deg(\cF)$. So, we obtain a filtration $0\subsetneq H^0(X;\bZ) \subsetneq H^*_{\rm{alg}}(X)$ induced by the filtration of $\DCoh(X)$. Since both $H^0(X;\bZ)$ and $H^*_{\rm{alg}}(X)/H^0(X;\bZ) \cong H^2(X;\bZ)$ are free of rank $1$, any pre-stability conditions on the associated graded subcategories induced by $\sigma_\bullet$ factors through a rank $1$ lattice and thus satisfy the support property.\endnote{Choose a norm on $H^0(X;\bZ)\otimes \bR \cong \bR$. The quantity relevant to the support property is $\inf_{E\in \cP}\lvert Z(E)\rvert/\lVert v(E)\rVert$, but this quantity is invariant under scale. So, $\inf_{E\in \cP}\lvert Z(E)\rvert/\lVert v(E)\rVert = \lvert Z(F)\rvert/\lVert v(F)\rVert>0$, where $F$ is any semistable object.}
\end{proof}

\printendnotes

\bibliography{refs}{}
\bibliographystyle{plain}

\end{document}